\documentclass[11pt,reqno]{amsart}
\usepackage{amsaddr}
\usepackage{float}
\oddsidemargin = 0cm \evensidemargin = 0cm \textwidth = 16cm
\usepackage{caption}
 \usepackage{amssymb,amsfonts,amscd,amsbsy, color, amsmath}
\usepackage[mathscr]{eucal}
\usepackage{url}
\usepackage{graphicx, csquotes}
\usepackage{mathtools}
\usepackage{todonotes}
\usepackage{tabularx}
\usepackage{hyperref}
\usepackage{float, color}
\usepackage{placeins}
\usepackage{tikz}
\usepackage[labelsep=period]{caption}
\captionsetup[table]{name=Table}

%%%%%%%%%%%%%%%%%%%%%%%%%%%%%%%%%%%%%%%%%%%%%%%%%%%%%%%%%%%
\newtheorem{theorem}{Theorem}[section]
\newtheorem{lemma}[theorem]{Lemma}
\newtheorem{proposition}[theorem]{Proposition}

\theoremstyle{definition}

\numberwithin{equation}{section}

\newcommand{\Z}{\mathbb{Z}}

\usepackage{makecell}

\usepackage{relsize}
\setcounter{MaxMatrixCols}{50}

\usepackage{stmaryrd}
\usepackage{array}

\usepackage[nohyperlinks]{acronym}
\renewcommand{\url}[1]{#1}

\makeatletter
\newcommand*{\rom}[1]{\expandafter\@slowromancap\romannumeral #1@}
\makeatother

\makeatletter
\def\imod#1{\allowbreak\mkern5mu({\operator@font mod}\,\,#1)}
\makeatother
\allowdisplaybreaks

\makeatletter
\newcommand*\bigcdot{\mathpalette\bigcdot@{.5}}
\newcommand*\bigcdot@[2]{\mathbin{\vcenter{\hbox{\scalebox{#2}{$\m@th#1\bullet$}}}}}
\makeatother
\newcommand{\Mod}[1]{\ (\mathrm{mod}\ #1)}

\makeatletter
\@namedef{subjclassname@2020}{%
  \textup{2020} Mathematics Subject Classification}
\makeatother

\begin{document}

\title{Congruence properties modulo prime powers for a class of partition functions}

\author{Matthew Boylan \and Swati}
\address{Department of Mathematics, University of South Carolina, Columbia, SC, 29208, USA}
\email{boylan@math.sc.edu, s10@email.sc.edu}

\subjclass[2020]{11F33, 11F11, 11P83.}
\keywords{Modular forms, congruences, partitions}

\date{}
\begin{abstract}
Let $p$ be prime, and let $p_{[1,p]}(n)$ denote the function whose generating function is $\prod (1-q^n)^{-1}(1 - q^{pn})^{-1}$.  This function and its generalizations $p_{[c^{\ell}, d^m]}(n)$ are the subject of study in several recent papers.  Let $\ell\geq 5$, let $j\geq 1$, and let $p \in \{2, 3, 5\}$.  In this paper, we prove that the generating function for $p_{[1, p]}(n)$ in the progression $\beta_{p, \ell, j}$ modulo $\ell^j$ with $24\beta_{p, \ell, j} \equiv p + 1\Mod{\ell^j}$ lies in a Hecke-invariant subspace of type $\{\eta(Dz)\eta(Dpz)F(Dz) : F(z) \in M_{s}(\Gamma_0(p), \chi)\}$ for suitable $D\geq 1$, $s\geq 0$, and character~$\chi$.  When $p\in \{2, 3, 5\}$, we use the Hecke-invariance of these subspaces proved in \cite{warnockthesis} to prove, for distinct primes $\ell$ and $m\geq 5$ and $j\geq 1$, congruences of the form 
\[
p_{[1, p]}\left(\frac{\ell^jm^k n + 1}{D}\right)\equiv 0\Mod{\ell^j}
\]
for all $n\geq 1$ with $m\nmid n$, where $k$ is explicitly computable and depends on the forms in the invariant subspace.   Our proofs require adapting and extending analogous level one results on $p(n)$ in \cite{ahlgrenboylan} and \cite{yangpartition} to level $p$.
\end{abstract}

\maketitle
\section{Introduction and Statement of Results}

Let $c$ and $d$ be positive integers, and $\ell$ and $m$ be arbitrary integers. The arithmetic function $p_{[c^{\ell}, d^m]}(n)$ defined by 
\begin{equation}
\label{genfun}
\sum_{n = 0}^{\infty}p_{[c^{\ell}, d^m]}(n)q^n = \prod_{j = 1}^{\infty}\frac{1}{(1 - q^{cj})^{\ell}(1 - q^{dj})^m}
\end{equation}
is the subject of study in several recent papers, including \cite{atmanicongruences}, \cite{chandistribution}, \cite{chancubicpartition}, \cite{chancooper}, \cite{chantoh}, \cite{kimpartition}, \cite{mauth2023exact}, \cite{mercapartitions}, \cite{mestridge}, \cite{sinickcongruences}, \cite{wang}, and \cite{yaopartition}. The papers \cite{chandistribution}, \cite{chancubicpartition}, \cite{kimpartition}, \cite{mauth2023exact}, \cite{mercapartitions} and \cite{yaopartition} focus on arithmetic properties of $p_{[1, 2]}(n)$; \cite{chancooper} concerns $p_{[1^2, 3^2]}(n)$; \cite{chantoh} studies $p_{[1^{\ell}, c^m]}$ for $c\in \{2, 3, 5, 7, 11\}$ and certain $\ell$, $m\leq 4$; \cite{mestridge} and \cite{wang} prove Ramanujan-type congruences for $p_{[1^{\ell}, 11^{m}]}(n)$ modulo powers of $11$; and \cite{atmanicongruences} proves Ramanujan-type congruences for $p_{[1^{\ell}, d^{km}]}(n)$ modulo powers of $d$ where $d \in \{5, 7, 11, 13, 17 \}$ and $k \geq 1$ is an integer.  In \cite{sinickcongruences}, Sinick classified all primes $\ell$ and integers $N$ and $a$ with $N\geq 2$ such that $p_{[1, N]}(\ell n + a)\equiv 0\Mod{\ell}$ for all $n \in \mathbb{Z}$. 

Let $p$ be prime.  In this paper, we study the specialization $(c, d, \ell, m) = (1, p, 1, 1)$.  
Some of the interest in the functions $p_{[c^{\ell}, d^m]}(n)$ comes from their combinatorial interpretation. For~example, our specialization, $p_{[1, p]}(n)$, counts the number of partitions of $n$ using two colors where one of the colors, say the color red, only has parts that are multiples of $p$.  Interest in $p_{[c^{\ell}, d^m]}(n)$ also comes from the fact that its infinite product generating function is a weakly holomorphic eta-quotient with weight $-1$ and level $p$ with respect to the multiplier system associated to the the Dedekind eta-function, $\eta(z)$.  The eta-function is a function of the complex upper half-plane defined by
\begin{equation}
\label{eta1}
\eta(z) = q^{\frac{1}{24}}\prod_{n = 1}^{\infty}(1 - q^n), \ \ q = e^{2\pi iz}.
\end{equation}
For details on eta-quotients and modular forms, see Section 2.  Since $\eta(z)$ is a modular form, one can use the theory of modular forms to study $p_{[c^{\ell}, d^m]}(n)$ as the authors cited above do, and as we do here.  Our purpose in this paper is to extend and adapt two specific types of theorems on $p(n)$, the ordinary partition function, to the functions $p_{[1,p]}(n)$.  The generating functions for $p(n)$ and $p_{[1,p]}(n)$ reside in spaces that differ in important ways; in the context of the eta-multiplier, the generating function for $p(n)$ has half-integral weight and level $1$, while the generating function for $p_{[1, p]}(n)$ has integral weight and level $p$.  With more work, it is likely that our results are extendable and adaptable to the general functions $p_{[c^{\ell}, d^m]}(n)$ and beyond; our study of the infinite family $p_{[1, p]}(n)$ illustrates what is possible.    

We now give the two theorems on $p(n)$ whose analogues we aim to prove for $p_{[1, p]}(n)$.  Let $\ell \geq 5$ be prime, let $j\geq 1$ be an integer, and let $0\leq \beta_{\ell, j}\leq \ell^j - 1$ be the unique integer satisfying $24\beta_{\ell, j} \equiv 1 \Mod{\ell^j}$.  We also require integers 

\begin{gather*}
	\displaystyle{	r_{\ell,j} = 
		\begin{dcases}
			24 \left( 1 + \left\lfloor \frac{\ell}{24}  \right\rfloor   \right) - \ell \quad &\text{if $j$ is odd},\quad \\
			 23 \quad &\text{if $j$ is even}; \quad
		\end{dcases} 
  }
\end{gather*}
 and 
\begin{gather*}
	\displaystyle{	k_{\ell,j} = 
		\begin{dcases}
			\frac{(\ell^{j - 1} + 1)(\ell - 1)}{2} - 12 \left( 1 + \left\lfloor \frac{\ell}{24} \right\rfloor  \right) \quad & \text{ if $j$ is odd}, \\
		\ell^{j - 1} (\ell - 1) - 12 \quad & \text{ if $j$ is even.}
		\end{dcases}
  }
\end{gather*}
The first theorem identifies the generating function for $p(\ell^jn + \beta_{\ell,j})\Mod{\ell^j}$ as a modular form modulo $\ell^j$.  When $N$ and $k$ are integers with $N\geq 1$, and $\psi$ is a Dirichlet character modulo $N$, we denote the space of holomorphic modular forms on $\Gamma_0(N)$ with weight $k$ and nebentypus $\psi$ by $M_{k}(\Gamma_0(N),\psi)$.  
 
\begin{theorem}[Theorem 3 of \cite{ahlgrenboylan}] \label{ahlgren_boylan}
\label{AB}
Assume the notation above.  Let $\ell\geq 5$ be prime, and let $j\geq 1$ be an integer.  Then there exists a modular form $F_{\ell,j}(z) \in M_{k_{\ell,j}}(\Gamma_{0}(1)) \cap \mathbb{Z}[[q]]$ such that
\begin{gather*}
\sum_{n = 0}^{\infty}p(\ell^j n + \beta_{\ell, j})q^{n+\frac{r_{\ell, j}}{24}} \equiv \eta(z)^{r_{\ell,j}} F_{\ell,j}(z) \Mod{\ell^{j}}. 
 \end{gather*}
\end{theorem}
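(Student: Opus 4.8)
The plan is to recognize the generating function for $p(\ell^j n + \beta_{\ell,j})$ as the image of the single weakly holomorphic form $\eta(z)^{-1}=\sum_{n\ge 0}p(n)q^{n-1/24}$ under $j$ applications of the operator $U_\ell$, and then to pin down that image modulo $\ell^j$ using the theory of modular forms modulo $\ell$ in the style of Serre and Swinnerton-Dyer, refined to prime-power moduli. The first observation is that the progressions are nested: since $24\beta_{\ell,j+1}\equiv 1\pmod{\ell^{j+1}}$ forces $\beta_{\ell,j+1}\equiv\beta_{\ell,j}\pmod{\ell^j}$, writing $\beta_{\ell,j+1}=\beta_{\ell,j}+\ell^j t$ gives
\[
p(\ell^{j+1}n+\beta_{\ell,j+1})=p\bigl(\ell^{j}(\ell n+t)+\beta_{\ell,j}\bigr).
\]
Thus the generating function for the $(j+1)$-st progression is a shifted $U_\ell$ of that for the $j$-th, so the theorem becomes an inductive statement whose step is governed by one $U_\ell$-operator; starting from $\eta(24z)^{-1}$ on $\Gamma_0(576)$ (the $q$ rescaled by $24$ to clear the $1/24$) and iterating extracts exactly the progression $24\beta_{\ell,j}\equiv 1\pmod{\ell^j}$.

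I would then assemble three tools for the inductive step. First, $E_{\ell-1}\equiv 1\pmod{\ell}$ gives $E_{\ell-1}^{\ell^{j-1}}\equiv 1\pmod{\ell^j}$, a level-one holomorphic form of weight $\ell^{j-1}(\ell-1)$ that is trivial modulo $\ell^j$; multiplying by it raises the weight of any form by $\ell^{j-1}(\ell-1)$ without changing it mod $\ell^j$, and this is the source of the terms $\ell^{j-1}(\ell-1)$ and $(\ell^{j-1}+1)(\ell-1)/2$ appearing in $k_{\ell,j}$. Second, for $F\in M_k(\Gamma_0(1))$ one has $T_\ell F=F|U_\ell+\ell^{k-1}F|V_\ell$, so once $k-1\ge j$ one gets $F|U_\ell\equiv T_\ell F\pmod{\ell^j}$ with $T_\ell F$ again of level one; this keeps the ambient group equal to $\Gamma_0(1)$ after reduction. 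Third, the congruence $\eta(z)^{\ell}\equiv\eta(\ell z)\pmod{\ell}$ and its prime-power refinements let me commute the half-integral factor $\eta^{r_{\ell,j}}$ past $U_\ell$ and read off the new exponent $r_{\ell,j+1}$; the alternation of $r_{\ell,j}$ between $23$ and $24(1+\lfloor\ell/24\rfloor)-\ell$ is precisely the two-step periodicity produced by applying $U_\ell$ once, which is also why $k_{\ell,j}$ splits into the even and odd cases.

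With these in hand, the inductive step is the assertion that $U_\ell$ carries the space $\{\eta^{r_{\ell,j}}F:F\in M_{k_{\ell,j}}(\Gamma_0(1))\}$, reduced modulo $\ell^j$, into the analogous space attached to $j+1$ modulo $\ell^{j+1}$, and the base case $j=1$ I would verify directly by clearing the cusp pole of $\eta^{-1}$ with the factor $\eta^{r_{\ell,1}}$ (the condition $r_{\ell,1}\equiv-\ell\pmod{24}$ making the product a holomorphic level-one object modulo $\ell$) and locating its weight inside the finite-dimensional space $M_{k_{\ell,1}}(\Gamma_0(1))\bmod\ell$ by a valence/filtration bound. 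Integrality of $F_{\ell,j}$ is preserved throughout, since $U_\ell$, multiplication by $E_{\ell-1}^{\ell^{j-1}}$ and by $\eta$-powers, and reduction of $T_\ell$ all send $\Z[[q]]$ to $\Z[[q]]$.

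The hard part is exactly this $U_\ell$-step carried out modulo the full prime power rather than modulo $\ell$: one must show both that the weight drops by the precise amount recorded in $k_{\ell,j}$ and that the congruence holds to the full modulus $\ell^{j+1}$, i.e.\ that one gains an extra power of $\ell$ at each application of $U_\ell$. Over $\mathbb{F}_\ell$ this is the classical Serre--Swinnerton-Dyer filtration theory together with the behaviour of the theta operator, but modulo $\ell^j$ one must work in Serre's ring of $\ell$-adic modular forms, track weights in the $\ell$-adic weight space (where the role of $\ell-1$ is played by $\ell^{j-1}(\ell-1)$), and control the error terms to the growing precision $\ell^{j+1}$ so that the recursion actually closes. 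Securing this gain of precision and the exact weight bookkeeping---rather than the essentially formal setup above---is where I expect nearly all of the difficulty to lie, and it is precisely the input from \cite{ahlgrenboylan} that the remainder of the paper must adapt to the eta-quotient $\eta(z)^{-1}\eta(pz)^{-1}$ in place of $\eta(z)^{-1}$.
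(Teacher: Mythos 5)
Your formal setup (nesting of the progressions, $E_{\ell-1}^{\ell^{j-1}}\equiv 1\pmod{\ell^j}$, $T_\ell\equiv U_\ell$ in large weight) is fine, but the central inductive step has a genuine gap, and the deferral of that gap to \cite{ahlgrenboylan} is misplaced because that is not how the cited proof works. As stated, the assertion that ``$U_\ell$ carries the space $\{\eta^{r_{\ell,j}}F\}$, reduced modulo $\ell^j$, into the analogous space modulo $\ell^{j+1}$'' is not even meaningful: from $\Phi_j\equiv \eta^{r_{\ell,j}}F_{\ell,j}\pmod{\ell^j}$, applying $U_\ell$ can only ever yield information modulo $\ell^j$. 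To close the recursion you would have to (a) show that the error $\Phi_j-\eta^{r_{\ell,j}}F_{\ell,j}=\ell^jE$ has $E$ itself a modular form of controlled weight and level (a congruence of $q$-series gives no such thing), and (b) prove that $U_\ell$ strictly increases the $\ell$-adic valuation of $E$, i.e.\ a contraction/nilpotency property of $U_\ell$ modulo $\ell$ on an ambient space containing all possible errors. This valuation gain is the entire content of the theorem in your approach, and nothing in the proposal establishes it. Moreover, the proof of Theorem 3 of \cite{ahlgrenboylan} --- mirrored in this paper's proof of Theorem \ref{T1} --- is not an induction on $j$ with precision gain at each stage: it applies $U_{\ell^j}$ \emph{once} to the level-$\ell^j$ eta-quotient $f_{\ell,j}(z)=\eta(\ell^jz)^{\ell^j}/\eta(z)$ of \eqref{FVAL}, multiplies by the trivial-mod-$\ell^j$ form $h_{\ell,j}$ of \eqref{hdef} to adjust weight and level, drops to level one by an Atkin--Lehner operator plus the trace, and kills the complementary trace term by an explicit valuation estimate on cusp expansions (the analogue of Lemma \ref{key_lemma}), all to the full modulus $\ell^j$ in one shot. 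So the input you hope to import does not exist in the form your induction needs; iterated-$U_\ell$ precision-gain arguments do exist in the literature (Folsom--Kent--Ono style $\ell$-adic stabilization), but they require a separate and substantial analysis that you would have to carry out.

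A second concrete failure point is the claim that $\eta(z)^\ell\equiv\eta(\ell z)\pmod{\ell}$ ``and its prime-power refinements'' let you commute $\eta^{r_{\ell,j}}$ past $U_\ell$. The factorization property of $U_\ell$ (Proposition \ref{U_factor}) requires the factor being pulled out to be a series in $q^\ell$, and the prime-power refinement $\eta(z)^{\ell^j}\equiv\eta(\ell z)^{\ell^{j-1}}\pmod{\ell^j}$ applies only to eta-powers whose exponent is divisible by a large power of $\ell$. The exponents $r_{\ell,j}$ are small and fixed (at most $23+\ell$), so $\eta^{r_{\ell,j}}$ is not congruent modulo $\ell^{j+1}$ --- nor modulo anything useful --- to a series in $q^\ell$, and the commutation fails; this also undercuts your use of $T_\ell=U_\ell+\ell^{k-1}V_\ell$, which applies to the integer-weight factor $F$ only after the commutation you cannot perform (the object $\eta^{r}F$ is of half-integral weight, where the Hecke operator is $T_{\ell^2}$). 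In the genuine argument the small eta-power is never pushed through $U_\ell$ at all: one first produces a level-one form $G_{\ell,j}$ congruent to the full generating function, and only afterwards divides by $\eta^{r_{\ell,j}}$ (justified by a cusp-order count, as in the passage from Theorem \ref{T1} to Theorem \ref{MT1} via \eqref{H_def}).
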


The second theorem concerns explicit congruences for $p(n)$ modulo prime powers. The Ramanujan congruences \cite{RamanujanAiyangarSrinivasa1927} assert, for all $n$, that 
\[
p(\ell n + a)\equiv 0 \Mod{\ell}
\]
when $(\ell, a)\in \{(5, 4), (7, 5), (11, 6)\}$.  Ramanujan's work inspired substantial subsequent study of congruences for $p(n)$.  A major breakthrough occurred when Ono \cite{onopartitions} proved, for all primes $\ell\geq 5$ and all integers $k\geq 1$, that a positive proportion of primes $m$ have the property that $p\left(\frac{\ell^km^3n + 1}{24}\right)\equiv 0\Mod{\ell}$ for all $n$ with $m \nmid n$.  Later, Y. Yang \cite{yangpartition} proved the following theorem which one can use to find explicit congruences for $p(n)$ of the type in Ono's theorem.  

\begin{theorem}[Theorem 6.7 of \cite{yangpartition}] \label{Yang}
Let $\ell$ and $m$ be distinct primes with $\ell \geq 13$ and $m\geq 5$, and let $j\geq 1$ be an integer.  Then there exist explicitly computable integers $K$ and $M\geq 1$ such that the following congruences hold.
\begin{enumerate}
\item 
For all positive integers $u$ and $n$ with $m \nmid n$, we have
\begin{equation*} \label{Yang:result}
    p \left( \frac{\ell^{j} m^{2uK - 1} n + 1}{24}    \right) \equiv 0 \Mod{\ell^{j}}. 
\end{equation*}
\item 
For all nonnegative integers $r$ and $n$, we have 
\begin{equation*}
p\left(\frac{\ell^jm^rn +1}{24}\right) \equiv p\left(\frac{\ell^jm^{M + r}n + 1}{24}\right) \Mod{\ell^j}.
\end{equation*}
\end{enumerate}
\end{theorem}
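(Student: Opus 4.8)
The plan is to recast the partition values as Fourier coefficients of a half-integral weight cusp form modulo $\ell^j$ and then to analyze the one-parameter tower of coefficients attached to the prime $m$ through the half-integral weight Hecke operator $T_{m^2}$. Starting from Theorem \ref{AB}, I would substitute $z\mapsto 24z$ to obtain the holomorphic form
\[
G_{\ell,j}(z):=\eta(24z)^{r_{\ell,j}}\,F_{\ell,j}(24z)\equiv\sum_{n\ge 0}p(\ell^j n+\beta_{\ell,j})\,q^{24n+r_{\ell,j}}\pmod{\ell^j},
\]
of half-integral weight $\lambda+\tfrac12=k_{\ell,j}+\tfrac{r_{\ell,j}}{2}$ (half-integral since $r_{\ell,j}$ is odd) on $\Gamma_0(576)$ with a quadratic nebentypus $\chi$. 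After the reindexing of the arithmetic progression that aligns the $q$-exponents with $24N-1$, the coefficient $a(N)$ of $G_{\ell,j}$ at an index $N\equiv-1\pmod{24}$ divisible by $\ell^j$ is $\equiv\pm\,p\bigl(\tfrac{N+1}{24}\bigr)\pmod{\ell^j}$; in particular, with $N_0=\ell^j n$ the target quantities $p\bigl(\tfrac{\ell^j m^r n+1}{24}\bigr)$ are exactly the coefficients $a(m^r N_0)$.

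Next I would bring in the Hecke operator $T_{m^2}$ for $m\nmid 6\ell$, noting that $m\ge5$ guarantees coprimality to the level $576$. Modulo $\ell^j$ the ambient space of cusp forms is a finite $\Z/\ell^j\Z$-module $V$ containing $G_{\ell,j}$ and stable under $T_{m^2}$, and the coefficient formula (Shimura) reads
\[
\bigl(G\mid T_{m^2}\bigr)(N)=a(m^2N)+\chi^{*}(m)\Bigl(\tfrac{N}{m}\Bigr)m^{\lambda-1}a(N)+\chi(m^2)m^{2\lambda-1}a(N/m^2).
\]
The decisive point is that at an index $N=m^rN_0$ with $r\ge1$ and $m\nmid N_0$ the Kronecker term vanishes, since $\bigl(\tfrac{m^rN_0}{m}\bigr)=0$. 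Consequently the coefficients along the tower $r\mapsto a(m^rN_0)$ satisfy a fixed linear recurrence over $\Z/\ell^j\Z$ in steps of two whose characteristic polynomial is the Hecke polynomial of $G$ at $m$; for a $T_{m^2}$-eigenform of eigenvalue $\lambda_m$ this is simply
\[
a(m^{r+2}N_0)=\lambda_m\,a(m^rN_0)-\gamma\,a(m^{r-2}N_0),\qquad \gamma:=\chi(m^2)m^{2\lambda-1},
\]
and in general one uses the Cayley--Hamilton relation of $T_{m^2}$ on $V$. Crucially $\gamma$ is a unit modulo $\ell^j$ (because $m\ne\ell$), so the recurrence is invertible; the seed of the odd subtower is $a(mN_0)$ and the even subtower starts from $a(N_0)$.

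Both conclusions now fall out of the arithmetic of this recurrence over the finite ring $\Z/\ell^j\Z$. For statement (2), invertibility of $\gamma$ makes the recurrence purely periodic, so the full tower $r\mapsto a(m^rN_0)$ is purely periodic modulo $\ell^j$ with some period $M$; reading off coefficients gives $p\bigl(\tfrac{\ell^j m^{r+M}n+1}{24}\bigr)\equiv p\bigl(\tfrac{\ell^j m^{r}n+1}{24}\bigr)\pmod{\ell^j}$ for all $r\ge0$. For statement (1), I would solve the odd subtower in closed form: writing $\alpha,\beta$ for the roots of $X^2-\lambda_mX+\gamma$ one has $a(m^{2i+1}N_0)=a(mN_0)\,\dfrac{\alpha^{i+1}-\beta^{i+1}}{\alpha-\beta}$, which vanishes modulo $\ell^j$ precisely when $(\alpha/\beta)^{i+1}\equiv1$, that is when $i\equiv-1$ modulo $K:=\operatorname{ord}(\alpha/\beta)$ in $(\mathcal O/\ell^j)^{\times}$. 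Taking $i+1=uK$ yields $a(\ell^j m^{2uK-1}n)\equiv0\pmod{\ell^j}$ for every $u\ge1$ and every $n$ with $m\nmid n$, which is statement (1); in the general (non-eigenform) case one takes $K$ to be the least common multiple of the corresponding orders attached to the Hecke factors of $G$ on $V$.

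The main obstacle is making $K$ and $M$ \emph{explicitly} computable and uniform, which amounts to controlling the characteristic polynomial of $T_{m^2}$ on $V$ modulo $\ell^j$: this requires bounding $\dim V$ (equivalently the weight $\lambda+\tfrac12$ and level $576$) and the Satake parameters $\alpha,\beta$, and it is where the hypotheses $\ell\ge13$ and $m\ge5$ do their work, the former keeping the weight small relative to $\ell$ and guaranteeing the ordinarity needed for the orders in $(\mathcal O/\ell^j)^{\times}$ to be finite and computable, the latter keeping $\gamma$ a unit. Two points demand extra care: the degenerate case $\alpha=\beta$ (repeated Satake root, where $a(m^{2i+1}N_0)=(i+1)\alpha^{i}a(mN_0)$ and one instead exploits divisibility by $\ell$ together with the factor $i+1$), and the passage from the mod-$\ell$ orders to the mod-$\ell^j$ orders, which typically inflates $K$ and $M$ by a factor of $\ell^{j-1}$. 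Assembling these explicit bounds over all Hecke factors produces the integers $K$ and $M\ge1$ promised in the statement.
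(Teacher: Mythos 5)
Your overall strategy --- realizing $p(\ell^j n+\beta_{\ell,j})$ as coefficients $a(N)$ of the half-integral weight form $\eta(24z)^{r_{\ell,j}}F_{\ell,j}(24z)$ modulo $\ell^j$, noting that Shimura's formula for $T_{m^2}$ loses its middle term at indices divisible by $m$, and extracting both conclusions from the resulting invertible linear recurrence over $\Z/\ell^j\Z$ --- is exactly the mechanism behind this theorem as the paper presents it: the paper quotes the result from Yang without reproving it, but explains that $K$ and $M$ arise as orders of a matrix in $\mathrm{PGL}_{2t}(\Z/\ell^j\Z)$ and $\mathrm{GL}_{2t}(\Z/\ell^j\Z)$ encoding $T_{m^2}$ on a $\Z$-basis of the invariant space $S_{r,w}=\{\eta(24z)^r F(24z): F\in M_w(\Gamma_0(1))\}$, and it carries out the identical argument in full detail for the $p_{[1,p]}$ analogue (Lemma \ref{E8} and the proof of Theorem \ref{E5}). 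Minor slips first: since $a(N)\equiv p\bigl(\tfrac{\ell^j N+1}{24}\bigr)\pmod{\ell^j}$, the relevant index is $N_0=n$, not $\ell^j n$; and your reading of the hypothesis $\ell\ge 13$ is off --- no ``ordinarity'' is needed, since orders in the finite groups $\mathrm{GL}_{2d}(\Z/\ell^j\Z)$ are automatically finite; the hypothesis excludes the degenerate small primes $\ell\in\{5,7,11\}$.

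The genuine gap is your passage to Satake roots and an eigen-decomposition. Modulo $\ell^j$ the operator $T_{m^2}$ need not be semisimple: distinct eigenforms can be congruent modulo $\ell$, so the eigencomponents of $G_{\ell,j}$ generally acquire $\ell$ in their denominators and the reduction of $G_{\ell,j}$ modulo $\ell^j$ does not decompose; hence ``take $K$ to be the lcm of the orders of $\alpha_f/\beta_f$ over the Hecke factors'' is not justified as stated. Even for an honest eigenform, your ``vanishes precisely when $(\alpha/\beta)^{i+1}\equiv 1$'' silently divides by $\alpha-\beta$: when $\ell\mid(\alpha-\beta)$ but $\alpha\neq\beta$, the congruence $\alpha^{i+1}\equiv\beta^{i+1}\pmod{\ell^j}$ does \emph{not} give $\frac{\alpha^{i+1}-\beta^{i+1}}{\alpha-\beta}\equiv 0\pmod{\ell^j}$, so your $K$ can fail at the stated modulus; your repeated-root caveat covers only the case $\alpha=\beta$ exactly. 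Both defects vanish if you stay with the integral formulation your own Cayley--Hamilton remark gestures at: with $\vec f$ a $\Z$-basis of the invariant space and the matrix of $T_{m^2}$ on it, form the $2d\times 2d$ companion matrix $A$ as in \eqref{A_matrix}, take $K$ to be the order of $A$ in $\mathrm{PGL}_{2d}(\Z/\ell^j\Z)$ and $M$ its order in $\mathrm{GL}_{2d}(\Z/\ell^j\Z)$; then $A^{uK}\equiv cI$ forces the block $M_{uK-1}\equiv 0_d$ in the expansion of $\vec f\mid U_{m^2}^{\,uK-1}$, and applying $U_m$ annihilates the twist and $V_{m^2}$ contributions at indices prime to $m$ --- no diagonalization and no division anywhere, and the $\mathrm{GL}$-order handles part (2) uniformly, including the bottom of the even tower where the Kronecker term $\left(\frac{N_0}{m}\right)$ does not vanish. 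This is precisely how the paper proves Theorem \ref{E5}, and working in the small invariant subspace $S_{r,w}$ rather than your full cusp space $V$ is also what makes $K$ and $M$ computable in practice rather than merely in principle.
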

\noindent
We remark that \cite{chensun} gives analogous results to Theorem \ref{ahlgren_boylan} and \ref{Yang} for the $r$-colored partition function, denoted by $p_{r}(n)$, when $r\geq 1$ is odd.  As the generating function for $p_r(n)$ is the $r$th power of the generating function for $p(n)$, it also has half-integral weight and level~$1$ with respect to the eta-multiplier; therefore, the analogous results for these functions follow similarly.    

\medskip

We now state our theorems on $p_{[1,p]}(n)$.  Since $p_{[1, p]}(n)$ depends on the prime $p$, the notation required to state our theorems in full generality is more complicated.  We let $\ell \neq p$ be prime with $\ell\geq 5$, and we let $j$ be a positive integer. We define $0\leq \beta_{p,\ell,j}\leq \ell^j - 1$ to be the unique integer such that 
	\begin{gather} \label{B1}
		24 \beta_{p,\ell,j} \equiv p + 1 \Mod{\ell^{j}}.
	\end{gather}
	We also require
	\begin{gather} \label{B2}
		\delta_{p,\ell,j} = \frac{(\ell^{2j} - 1) (p + 1)}{24}.
	\end{gather}
Since $\ell\geq 5$ is prime, definitions \eqref{B1} and \eqref{B2} imply that $\delta_{p,\ell,j} \equiv -\beta_{p,\ell, j} \Mod{\ell^j}$, and hence, that $\frac{\beta_{p, \ell, j} + \delta_{p, \ell, j}}{\ell^j}$ is an integer. Since $0\leq \beta_{p, \ell, j} \leq \ell^j - 1$, we observe that 
\begin{equation} \label{beta_delta}
\frac{\beta_{p,\ell,j} + \delta_{p,\ell,j}}{\ell^j} = \left\lceil\delta_{p,\ell,j}/\ell^j \right\rceil.
\end{equation}
Next, with $\Delta = \gcd(24, p+1)$, we set $D = \frac{24}{\Delta}$, and we define $t_{p,\ell,j}$ by
\begin{equation}\label{eqn_beta}
    \frac{\beta_{p,\ell,j} + \delta_{p,\ell,j}}{\ell^{j}} = \left( \frac{p + 1}{\Delta} \right) t_{p.\ell,j} + r, \ \ \ 0 \leq r < \frac{p + 1}{\Delta}.
\end{equation}
From \eqref{beta_delta} and \eqref{eqn_beta}, it follows that 
\begin{equation}\label{defn_t}
t_{p,\ell,j} = \left\lfloor \frac{\left\lceil \delta_{p, \ell, j}/{\ell^j}  \right\rceil}{(p + 1) / \Delta}  \right\rfloor.
\end{equation}
We also define
\begin{equation} \label{eq:lambda_def}
\lambda_{\ell,j} = \ell^{j} - 1 + \ell^{j - 1}(\ell - 1).
\end{equation}
For simplicity, we omit the subscripts on the quantities above in everything that follows when the context is clear. Our first theorem is the analogue of Theorem \ref{AB} for the function~$p_{[1, p]}(n)$.

	\begin{theorem} \label{MT1}
		Assume the notation above. Let $\ell$ and $p$ be distinct primes with $\ell\geq 5$, and let $j$ be a positive integer. Then there exists a modular form $H(z) = H_{p, \ell, j}(z) \in M_{\lambda - Dt}\left(\Gamma_{0}(p), \left(\frac{-p}{\bigcdot}\right)^{Dt} \right)$ with integer coefficients such that
		\begin{equation}\label{defn_H}
			\sum_{m = 0}^{\infty} p_{\left[1, p \right]} (\ell^{j}m - \delta)  \: q^{m - \frac{\ell^j \left(\frac{p + 1}{\Delta} \right)}{D}} \equiv (\eta(z)\eta(pz))^{Dt - \ell^j}\,H(z) \Mod{\ell^j}.
		\end{equation}
	\end{theorem}
    
 \noindent
 {\bf Remarks.}
 \begin{enumerate}
     \item Applying the operator $V_{D} : z \mapsto Dz$ to both sides of \eqref{defn_H}, we deduce that
     \begin{equation*}
         \sum_{n \equiv - \ell^{j} \left( \frac{p + 1}{\Delta} \right) \Mod{D}} p_{[1,p]} \left( \frac{\ell^j n + \frac{p + 1}{\Delta}}{D}   \right) q^n \equiv (\eta(Dz) \eta(Dpz))^{Dt - \ell^j} H(Dz) \Mod{\ell^j},
     \end{equation*}
     where the modular form on the right side lies in $M_{\ell^{j - 1}(\ell - 1) - 1}\left(\Gamma_0(pD^2),\left(\frac{-p}{\bigcdot}\right)\right)$.
     \item Elementary arguments using properties of the floor and ceiling functions show that the quantity $t$ in \eqref{defn_t} simplifies according to the size of $\frac{p + 1}{24}$ relative to $\ell^j$ as follows:
\renewcommand{\arraystretch}{1.5} % Adjust the vertical spacing
\setlength{\tabcolsep}{12pt} % Adjust the horizontal spacing
\begin{table}[ht]
  \centering
  %\resizebox{1.3\textwidth}{!}{%
 % \resizebox{\textwidth}{!}{%
\begin{tabular}{|c|c|}
\hline
Conditions & $t$ \\ 
\hline
$\frac{p + 1}{24}\geq \ell^j$
& \makecell{$ \big\lfloor \frac{\ell^j}{D} \big\rfloor$ \ \text{if} \ $D > 1$ \\[5 pt]
 $\ell^j - 1 \ \text{if} \ D = 1$}  \\ 
\hline
$1\leq \frac{p + 1}{24} < \ell^j$ & $\big\lfloor \frac{\ell^j}{D} \big\rfloor$ \\
\hline
\makecell{$\frac{p + 1}{24} < 1, \: p + 1 \mid 24$ \\ [5 pt] ($p\leq 11$)}
& $\big\lfloor \frac{\ell^j}{D} \big\rfloor + 1$ \\
\hline
\makecell{$\frac{p + 1}{24} < 1, \: p + 1\nmid 24$ \\ [5 pt] ($p\in \{13, 17, 19\}$)} & \makecell{$\big\lfloor \frac{\ell^j}{D} \big\rfloor + 1 \ \text{if} \ \ell \equiv -1\Mod{D} \ \text{and} \ (p, \ell) \neq (19, 5)$ \\ [5 pt]
$\big\lfloor \frac{\ell^j}{D} \big\rfloor$ \ \text{otherwise}} \\
\hline
\end{tabular}
 % }
 \vskip.15in
  \caption{Values of $t$}
  \label{tab:j_even}
\end{table}
\vspace{-.15in}
     \item For all $\alpha\in \Z$, we let $\overline{\alpha} \equiv \alpha\Mod{D}$ with $0\leq \overline{\alpha} \leq D - 1$.  In view of the previous remark, the exponent $Dt - \ell^j$ on $\eta(z)\eta(pz)$ in \eqref{defn_H} simplifies as 
     \begin{equation*}
         Dt - \ell^j = \begin{cases}
         -\overline{\ell^j}, & t = \big\lfloor \frac{\ell^j}{D} \big\rfloor, \\
         \overline{D - \ell^j}, & t = \big\lfloor \frac{\ell^j}{D} \big\rfloor + 1;
         \end{cases}
     \end{equation*}
     hence, it has absolute value bounded by $D - 1$.
     Furthermore, it follows that the exponent is non-negative if and only if any the following holds:
     \begin{enumerate}
         \item $1\leq \frac{p + 1}{24} < \ell^j$ and $D = 1$ (i.e., $p\equiv -1\Mod{24}$), 
         \item $p + 1\mid 24$,
         \item $p\in \{13, 17, 19\}$, $\ell \equiv -1\Mod{D}$ and $(p, \ell) \neq (19, 5)$.
     \end{enumerate}
     \item With $\lambda$ as in \eqref{eq:lambda_def}, we observe that the generating function for $p_{[1, p]}(\ell^jm - \delta)$ in \eqref{defn_H} has weight $\lambda - \ell^j = \ell^{j - 1}(\ell - 1) - 1 < \ell^{j - 1}(\ell - 1)$.  Since modular forms modulo $\ell^j$ have weight grading with values in $\Z/(\ell^{j - 1}(\ell - 1))\Z$, if the generating function is nonzero modulo $\ell^j$, then its filtration (minimal weight) modulo $\ell^j$ is $\ell^{j - 1}(\ell - 1) - 1$.  When $j = 1$, Theorem~1.1 of \cite{sinickcongruences} implies that the generating function is nonzero modulo $\ell$ for $\ell\geq 5$ and therefore, has filtration $\ell - 2$.  
 \end{enumerate}

\medskip

\noindent
We supply a simple example to illustrate the theorem. 

\medskip

\noindent
{\bf Example.}
Let $p = 5$, $j = 1$, and $\ell\geq 7$.  Then we have $\Delta = \gcd(6, 24) = 6$, $D = \frac{24}{\Delta} = 4$, and $\lambda = 2\ell - 2$.  The second remark following Theorem \ref{MT1} gives $t = \lfloor\ell/4\rfloor + 1$, while the second and third give 
\begin{equation*}
  Dt - \ell = \begin{cases}
    3, & \ell \equiv 1\Mod{4}, \\
    1, & \ell \equiv 3\Mod{4}; 
    \end{cases}
\ \ \text{and} \ \ 
  \lambda - Dt = \begin{cases}
      \ell + 5, & \ell \equiv 1\Mod{4}, \\
      \ell + 3, & \ell \equiv 3\Mod{4}.
  \end{cases} 
\end{equation*}
Theorem \ref{MT1} implies that
\[
\sum_{n = 0}^{\infty}p_{[1,5]}\left(\frac{\ell n +1}{4}\right)q^n \equiv (\eta(4z)\eta(20z))^{Dt - \ell}H(4z)\Mod{\ell}
\]
with $H(z)\in M_{\lambda - Dt}(\Gamma_0(5))$.  For primes $\ell \in \{7, 11, 13, 17\}$, we use the basis 
\begin{equation}\label{basis}
\left\{f_{0}(z) = \frac{\eta(z)^{10}}{\eta(5z)^{2}} = 1 + \cdots, ~ f_{1}(z) = \eta(z)^{4}~\eta(5z)^{4} = q + \cdots, ~ f_{2}(z) = \frac{\eta(5 z)^{10}}{\eta(z)^{2}} = q^2 + \cdots\right\}
\end{equation}
for $M_{4}(\Gamma_{0}(5))$ to compute $H(z)$ explicitly as in the following table.  
\renewcommand{\arraystretch}{1.5} % Adjust the vertical spacing
\setlength{\tabcolsep}{12pt} % Adjust the horizontal spacing
\begin{table}[ht]
  \centering
  %\resizebox{1.3\textwidth}{!}{%
 %\resizebox{\textwidth}{!}{%
\begin{tabular}{|c|c|}
\hline
$\ell$ & $H(z)$ \\ 
\hline
$7$ & $2 f_{0} + f_{1} + 5 f_{2}$ \\
 \hline
$11$ & $3 f_{0}^{2} + 7 f_{0}f_{1} + 3 f_{1}^{2} + 6 f_{1} f_{2} + 4 f_{2}^{2}$ \\
 \hline
$13$ & $12 f_{0}^{2} + 2 f_{0}f_{1} + 6 f_{1}^{2} + 3 f_{1} f_{2} + f_{2}^{2}$\\
 \hline
$17$ & $10 f_{0}^{3} + 16 f_{0}^{2} f_{1} + 7 f_{0}^{2} f_{2}  + 13 f_{0}f_{1}f_{2} + 8 f_{0} f_{2}^{2} + 15 f_{1} f_{2}^{2} + f_{2}^{3}$\\
 \hline
\end{tabular}
 % }
 \vskip.15in
  \caption{$H(z)$ in Theorem \ref{B1} for $p = 5, \: j = 1, \: \ell \in \{7, 11, 13, 17\}$}
\end{table}

\vspace{-.15in}
Our second theorem is the analogue of Theorem \ref{Yang} for the function $p_{[1,p]}(n)$.  
	\begin{theorem} \label{E5}
  Let $\ell$, $m$, and $p$ be distinct primes with $\ell$ and $m\geq 5$ and $p\in \{2, 3, 5\}$, and let $j\geq 1$ be an integer.  Then there exist explicitly computable integers $J$ and $N\geq 1$
  such that the following congruences hold.  
  \begin{enumerate} 
  \item For all positive integers $v$ and $n$ with $m\nmid n$, we have
		\begin{gather} \label{E13}
			p_{\left[1, p \right]} \left( \frac{\ell^{j} \: m^{2vJ - 1} \: n + 1}{D}  \right) \equiv 0 \Mod{\ell^{j}}.
		\end{gather}
  \item For all nonnegative integers $w$ and $n$, we have 
		\begin{gather} \label{E14}
			p_{\left[ 1, p  \right]} \left( \frac{\ell^{j} \: m^{w}\: n + 1}{D}  \right) \equiv  p_{\left[ 1, p  \right]} \left( \frac{\ell^{j} \: m^{2N + w} \: n + 1}{D} \right)  \Mod{\ell^{j}}.  
		\end{gather} 
  \end{enumerate}
	\end{theorem}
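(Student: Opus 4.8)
\noindent\textbf{Proof strategy for Theorem \ref{E5}.}
The plan is to combine Theorem \ref{MT1} with the finiteness of the relevant spaces modulo $\ell^{j}$, following the template of Yang's proof of Theorem \ref{Yang}. Because $p\in\{2,3,5\}$ we have $p+1\in\{3,4,6\}$, so $p+1\mid 24$ and Theorem \ref{MT1} holds in its simplified form: the series
\[
G(z):=(\eta(Dz)\eta(Dpz))^{y}\,H(Dz)=\sum_{n\ge 0}b(n)q^{n},\qquad H\in M_{k}(\Gamma_0(p),\chi),
\]
satisfies $b(n)\equiv p_{[1,p]}\!\left(\tfrac{\ell^{j}n+1}{D}\right)\pmod{\ell^{j}}$, is holomorphic (as $y\ge 0$), and has weight $s=k+y$ and nebentypus $\chi$ on a congruence subgroup whose level has all prime factors dividing $Dp$. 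By the Hecke-invariance built into Theorem \ref{MT1}, the reduction of $G$ modulo $\ell^{j}$ lies in a finite $\mathbb{Z}/\ell^{j}\mathbb{Z}$-module $V$ stable under the weight-$s$ Hecke operators $T_m$ for all primes $m\nmid Dp\ell$; since $m\ge 5$, $m\ne\ell$, $D\in\{4,6,8\}$ and $p\in\{2,3,5\}$, every admissible $m$ is of this kind.

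The engine is the Hecke recursion $T_{m^{r+1}}=T_mT_{m^{r}}-c\,T_{m^{r-1}}$ with $c=\chi(m)m^{s-1}\in(\mathbb{Z}/\ell^{j}\mathbb{Z})^{\times}$, a unit because $m\ne\ell$. Put $\tau_r:=T_{m^{r}}\big|_{V}\in\operatorname{End}(V)$. As $c$ is a unit the recursion is invertible, so the assignment $(\tau_{r},\tau_{r+1})\mapsto(\tau_{r+1},\tau_{r+2})$ is a bijection of the finite set $\operatorname{End}(V)\times\operatorname{End}(V)$; hence $(\tau_r)_{r\in\mathbb{Z}}$ is \emph{purely} periodic, say of period $P$, and the backward step gives $\tau_{-1}=c^{-1}(T_m\tau_0-\tau_1)=0$. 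The link to coefficients is the standard one: for $m\nmid n$ only the leading term survives, so $(G\mid T_{m^{r}})(n)=b(m^{r}n)$, while for a general index $n=m^{a}n'$ with $m\nmid n'$ one has $b(m^{w}n)=(G\mid T_{m^{w+a}})(n')$.

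With this in place both parts are immediate. For part \eqref{E13}, choose $J\ge 1$ with $P\mid 2J$ (for instance $J=P$); then $2vJ-1\equiv -1\pmod{P}$ for every $v\ge 1$, so $\tau_{2vJ-1}=\tau_{-1}=0$, and evaluating at the $n$-th coefficient with $m\nmid n$ yields $b(m^{2vJ-1}n)\equiv 0\pmod{\ell^{j}}$. This is the prime-power refinement of Ono's observation that a Hecke operator of eigenvalue zero kills the coefficients indexed by odd powers of $m$; here purity of the period replaces the eigenvalue computation and sidesteps any division by the (possibly zero-dividing) discriminant of $T_m$. For part \eqref{E14}, choose $N\ge 1$ with $P\mid 2N$ (again $N=P$ works); then $\tau_{r+2N}=\tau_r$ for all $r$, and comparing coefficients at $n=m^{a}n'$ gives $b(m^{2N+w}n)\equiv b(m^{w}n)\pmod{\ell^{j}}$ for all $n$. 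The integers $J$ and $N$ are explicitly computable from the period $P$, which in turn is read off from the matrix of $T_m$ acting on an explicit basis of $V$ (such as the eta-quotient bases exhibited in the Example above), via the characteristic polynomial of $T_m$ and the multiplicative orders of its roots modulo $\ell^{j}$.

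The step I expect to be the main obstacle is the structural input underlying Theorem \ref{MT1}: pinning down the finite Hecke-stable module $V$ with the correct weight $s$, level, and character modulo $\ell^{j}$, and verifying that $T_m$ genuinely preserves the reductions of the $p_{[1,p]}$-generating functions — this is precisely the part that requires adapting the filtration and operator arguments of \cite{ahlgrenboylan} (and the eta-quotient analysis behind Theorem \ref{MT1}) from $p(n)$ to $p_{[1,p]}(n)$. Once $V$ is secured, extracting the optimal (smallest) values of $J$ and $N$, rather than the crude choice $J=N=P$, is the remaining bookkeeping: it amounts to analyzing the $T_m$-module structure of $V$ over the Artinian local ring $\mathbb{Z}/\ell^{j}\mathbb{Z}$ and tracking how the even exponents $2vJ-1$ and $2N$ arise.
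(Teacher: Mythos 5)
Your overall template---find a finite Hecke-stable module $V$ modulo $\ell^j$ containing the generating function, run the rank-two recursion $T_{m^{r+1}}=T_mT_{m^r}-\chi(m)m^{s-1}T_{m^{r-1}}$ in $\operatorname{End}(V)$, and deduce both congruences from pure periodicity together with $\tau_{-1}=0$---is sound in shape, and your coefficient bookkeeping in both parts is correct. The gap is the structural claim on which everything rests: you assert that ``the Hecke-invariance built into Theorem \ref{MT1}'' supplies a finite $\Z/\ell^j\Z$-module $V$ stable under the \emph{prime-index} operators $T_m$, and you later propose to read off the period $P$ from the matrix of $T_m$ on the eta-quotient basis of $\mathcal{A}_{p,y,k,\chi}$. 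No such $T_m$-invariance exists: the paper states explicitly, in the discussion following Theorem \ref{warnock_boylan}, that $T_m$ does \emph{not} in general map $\mathcal{A}_{p,y,k,\chi}$ to itself; the only invariance available (Warnock's theorem, Theorem \ref{warnock_boylan}) is under the square-index operators $T_{m^2}$. This is fatal to the recursion as you set it up, since it necessarily passes through the odd-power operators $T_{m^{2r+1}}$, whose images leave $\mathcal{A}_{p,y,k,\chi}$; thus $\tau_r=T_{m^r}\big|_V$ is not an endomorphism of your $V$, and the matrix of $T_m$ with respect to the exhibited basis is not defined. You yourself flag this verification as ``the main obstacle,'' but the proposal offers no way through it, and this is exactly where the paper's proof takes a different form.

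The paper works around the failure of $T_m$-invariance as follows: using only $T_{m^2}$-invariance, it writes $\vec f\mid T_{m^2}=M\vec f$ for a $\Z$-basis $\vec f$ of $\mathcal{A}_{p,y,k,\chi}\cap\Z[[q]]$, and Lemma \ref{E8} proves by induction the closed formula $\vec f\mid U_{m^2}^i=M_i\vec f+N_i(\vec f\otimes 1_m)+O_i(\vec f\mid V_{m^2})$, with coefficient matrices read off from powers of the $2d\times 2d$ block matrix $A$ of \eqref{A_matrix}; the integers $J$ and $N$ are then the orders of $A$ in $\mathrm{PGL}_{2d}(\Z/\ell^j\Z)$ and $\mathrm{GL}_{2d}(\Z/\ell^j\Z)$, part (1) coming from $A^{vJ}\equiv cI_{2d}$ forcing $M_{vJ-1}\equiv 0_d$, and part (2) from $A^N\equiv I_{2d}$ forcing $\vec f\mid U_{m^{2N}}\equiv\vec f$. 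For what it is worth, your periodicity engine can be salvaged without Theorem \ref{warnock_boylan}: the reduction modulo $\ell^j$ of the full integral lattice $M_s\left(\Gamma_0(pD^2),\left(\frac{-p}{\cdot}\right)\right)\cap\Z[[q]]$ (the ambient space in \eqref{Aspace}) genuinely is finite and $T_m$-stable for all $m$ coprime to $pD^2\ell$, and your argument then runs verbatim with this $V$ and $c=\left(\frac{-p}{m}\right)m^{s-1}$ (note the nebentypus of $G$ is $\left(\frac{-p}{\cdot}\right)$, not the character $\chi$ of $H$). But that repair operates in dimension $\dim M_s$ (for instance $16s-8$ when $p=2$, per the paper's dimension table) rather than $d=\dim\mathcal{A}_{p,y,k,\chi}$, so it gives up the small invariant subspace that makes the paper's $J$ and $N$ (e.g.\ $J=1190$ in Example 1) practically computable.
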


\noindent
We furnish some examples of the first part of the theorem.

\medskip

\noindent
{\bf Example 1.}  Let $\ell = 13$, $m = 7$, $p = 5$, and $j = 1$.  When $p = 5$, we have $D = 4$.  We find that the first part of the theorem holds with $J = 1190$: for all $v\geq 1$ and all $n\geq 1$ with $7\nmid n$, we have
\begin{equation}
p_{[1,5]}\left(\frac{13\bigcdot 7^{2380v - 1}n + 1}{4}\right) \equiv 0 \Mod{13}. \label{ex1}
\end{equation}

\medskip

\noindent
{\bf Example 2.}  Let $\ell = 7$, $m = 23$, $p = 3$, and $j = 2$.  When $p = 3$, we have $D = 6$.  We find that the first part of the theorem holds with $J = 1176$: for all $v\geq 1$ and all $n\geq 1$ with $23\nmid n$, we have 
\begin{equation}
p_{[1,3]}\left(\frac{7^2\bigcdot 23^{2352v - 1}n + 1}{6}\right)\equiv 0\Mod{7^2}.
\label{ex2}
\end{equation}

\noindent
{\bf Remark.} We provide an explicit description for $J$ and $N$ below and in the proof of Theorem~\ref{E5} in Section~3.  At the end of that section, we give details on the computations for our two examples.

\medskip

Before we proceed with the proofs of Theorems \ref{MT1} and \ref{E5}, we briefly discuss the interplay between them. It turns out that Theorems \ref{Yang} and \ref{E5} ultimately follow from the fact that the relevant generating functions modulo $\ell^j$ in Theorems \ref{ahlgren_boylan} and \ref{MT1} belong to subspaces invariant under Hecke operators with square index.  For details on Hecke operators in integer weight, see Section 2. In particular, the first remark following Theorem \ref{MT1} implies that, modulo~$\ell^j$, we~have 
\begin{equation*}
\sum_{n \equiv - \ell^{j} \left( \frac{p + 1}{\Delta} \right) \Mod{D}} p_{[1,p]} \left( \frac{\ell^j n + \frac{p + 1}{\Delta}}{D}   \right) q^n
\in \mathcal{A}_{p,y,k,\chi},
\end{equation*}
where 
\begin{equation}\label{Aspace}
\mathcal{A}_{p,y,k,\chi} = \{(\eta(Dz)\eta(Dpz))^{y}\,H(Dz) : H(z)\in M_k(\Gamma_0(p),\chi)\},
\end{equation}
$y = Dt - \ell^j$, $k = \lambda - Dt$ and $\chi = \left( \frac{-p}{\bigcdot} \right)^{Dt}$. 
We observe that $\mathcal{A}_{p, y, k, \chi} \cong M_{k}(\Gamma_0(p),\chi)$ as $\mathbb{C}$-vector spaces and that when $y\geq 0$ as in the third remark following Theorem \ref{MT1}, we have
\[
\mathcal{A}_{p, y, k, \chi}\subseteq M_{k + y}\left(\Gamma_0(pD^2),\left(\frac{-p}{\bigcdot}\right)\right).
\]
The following theorem is the special case we require of a much more general theorem proved in~\cite{warnockthesis}.      
\begin{theorem}\label{warnock_boylan}
Assume the notation above with $y = Dt - \ell^j\geq 0$ and $k = \lambda - Dt$, and let $p$ and $m$ be distinct primes with $p\in \{2, 3, 5\}$ and $m\geq 5$.  Then the integer weight Hecke operator $T_{m^2}$ maps the subspace $\mathcal{A}_{p, y, k, \chi}\subseteq M_{k + y}\left(\Gamma_0(pD^2),\left(\frac{-p}{\bigcdot}\right)\right)$ to itself.       
\end{theorem}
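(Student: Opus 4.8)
The plan is to transfer the problem from level $pD^2$ down to level $p$, where $\Gamma_0(p)$ has only the two cusps $\infty$ and $0$, and then to reduce the invariance of $\mathcal A_{p,y,k,\chi}$ to a bound on the orders of vanishing of the relevant forms at those two cusps. Write $V_D$ for the operator $f(z)\mapsto f(Dz)$ and set $\mathcal E(z)=(\eta(z)\eta(pz))^{y}$, so that $\eta(Dz)\eta(Dpz)=V_D(\eta(z)\eta(pz))$ and, because $V_D$ is a ring homomorphism on $q$-expansions,
\[
\mathcal A_{p,y,k,\chi}=V_D(\mathcal B),\qquad \mathcal B:=\mathcal E\cdot M_k(\Gamma_0(p),\chi).
\]
First I would record the commutation relation $V_D\circ T_{m^2}=T_{m^2}\circ V_D$. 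On $q$-expansions $T_{m^2}=U_{m^2}+\psi(m)m^{s-1}+\psi(m)^2m^{2s-2}V_{m^2}$ with $\psi=\left(\frac{-p}{\cdot}\right)$, and since $\gcd(m,D)=1$ (as $m\geq 5$ and $D\mid 24$) a direct check on coefficients shows that $T_{m^2}$ sends forms supported on multiples of $D$ to forms supported on multiples of $D$ and commutes with $V_D$ there, provided the two nebentypus characters agree on $m$ (which holds since both restrict to the same quadratic symbol at $m$). This reduces the theorem to showing that the weight-$s$ operator $T_{m^2}$ at level $p$ maps $\mathcal B$ to itself.

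Since $\eta$ is nonvanishing on $\mathbb H$, multiplication by $\mathcal E$ is a linear isomorphism $M_k(\Gamma_0(p),\chi)\xrightarrow{\sim}\mathcal B$, so it suffices to prove that for $H\in M_k(\Gamma_0(p),\chi)$ the quotient $(\mathcal E H)\mid T_{m^2}/\mathcal E$ again lies in $M_k(\Gamma_0(p),\chi)$. Because $T_{m^2}$ preserves the space of weakly holomorphic forms of weight $s$ on $\Gamma_0(p)$ (as $m\nmid p$), this quotient is automatically a weakly holomorphic modular form of weight $s-y=k$, level $p$, and character $\chi$, and it is holomorphic on $\mathbb H$ since $\mathcal E$ has no zeros there. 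The only thing left to verify is holomorphy at the two cusps, i.e. that
\[
\operatorname{ord}_{\mathfrak a}\big((\mathcal E H)\mid T_{m^2}\big)\ \geq\ \operatorname{ord}_{\mathfrak a}(\mathcal E)\qquad\text{for }\mathfrak a\in\{\infty,0\}.
\]

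The heart of the argument is this pair of cusp estimates, and here the hypothesis $p\in\{2,3,5\}$ is decisive. The classical fact that $m^2\equiv 1\pmod{24}$ for every prime $m\geq 5$ guarantees that $U_{m^2}$ and $V_{m^2}$ preserve the residue class modulo $1$ of the $q$-exponents attached to the $\eta$-multiplier, namely $\kappa:=\tfrac{y(p+1)}{24}$. For $p\in\{2,3,5\}$ we have $p+1\mid 24$, so $\Delta=p+1$, $D=24/(p+1)$, and the formulas of Theorem \ref{MT1} give $0<y<D$; hence $0<\kappa=y/D<1$. The order of $\mathcal E$ at $\infty$ is exactly $\kappa$, and in $U_{m^2}(\mathcal E H)$ a nonzero coefficient can occur only at an exponent $n$ with $n\equiv\kappa\pmod 1$ and $m^2n\geq\kappa$; since $0<\kappa<1$ the smallest such positive $n$ is $\kappa$ itself, so $\operatorname{ord}_\infty(U_{m^2}(\mathcal E H))\geq\kappa$. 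The remaining two pieces of $T_{m^2}$ only raise the order at $\infty$, which gives the bound at $\infty$. For the cusp $0$ I would invoke the Fricke involution $W_p$: since $\mathcal E$ is fixed by $W_p$ up to a constant and $T_{m^2}$ commutes with $W_p$ up to a scalar when $m\nmid p$, applying $W_p$ converts the $0$-expansion into an $\infty$-expansion of a form in a space of the same type, and the identical estimate yields $\operatorname{ord}_0((\mathcal E H)\mid T_{m^2})\geq\kappa$.

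I expect the main obstacle to be precisely the cusp-at-$\infty$ estimate, because the operator $U_{m^2}$ generically lowers the order of vanishing at $\infty$; it is only the combination of the $\eta$-multiplier support condition ($n\equiv\kappa\pmod 1$, forced by $m^2\equiv1\pmod{24}$) with the inequality $0<\kappa<1$ (forced by $p\in\{2,3,5\}$) that prevents a pole from appearing after division by $\mathcal E$. A secondary technical point is the bookkeeping of the nebentypus and the $\eta$-multiplier through both the commutation with $V_D$ and the Fricke involution $W_p$, which must be tracked to confirm that the resulting form carries exactly the character $\chi$ and hence lies in $M_k(\Gamma_0(p),\chi)$. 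Together these give $T_{m^2}\mathcal A_{p,y,k,\chi}\subseteq\mathcal A_{p,y,k,\chi}$.
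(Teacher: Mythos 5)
The paper itself contains no proof of Theorem \ref{warnock_boylan}: it is quoted as the special case of a general theorem proved in \cite{warnockthesis}. So your proposal can only be measured against what that result actually requires, and against the analogous computations the paper does carry out (Lemma \ref{key_lemma}). Your skeleton correctly isolates the two numerical facts that make the theorem true for $p\in\{2,3,5\}$ --- that $m^2\equiv 1\pmod{24}$ preserves the class of $q$-exponents modulo $1$, and that $0<y<D$ forces $0<\kappa=y/D<1$, so that dividing $(\mathcal{E}H)\mid T_{m^2}$ by $\mathcal{E}$ cannot create a pole at $\infty$ --- and your order estimate at $\infty$ and the Fricke symmetry for the cusp $0$ are sound in outline. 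But there is a genuine gap at the central step, namely the sentence asserting that the quotient is ``automatically a weakly holomorphic modular form of weight $s-y=k$, level $p$, and character $\chi$.'' Precisely because $0<y<D$, we have $y(p+1)/24=y/D\notin\mathbb{Z}$, so $\mathcal{E}=(\eta(z)\eta(pz))^{y}$ transforms on $\Gamma_0(p)$ with a \emph{genuine multiplier system} taking values in nontrivial roots of unity (its expansion begins $q^{y/D}$, which no form with a Dirichlet character can have). Hence $\mathcal{E}H$ is not a weakly holomorphic form on $\Gamma_0(p)$ with nebentypus, and the classical fact that $T_{m^2}$ preserves such spaces, which you invoke, simply does not apply to it. The actual content of the theorem --- and the bulk of the work in \cite{warnockthesis} --- is a Wohlfahrt/Shimura-type verification that the double-coset operator of index $m^2$ is well defined on weight-$s$ forms with this eta-multiplier and \emph{preserves} the multiplier, which requires feeding $m^2\equiv 1\pmod{24}$ into the multiplier $\epsilon_{a,b,c,d}$ of \eqref{ETA} through explicit matrix decompositions, exactly in the style of \eqref{TC1}--\eqref{TC4} in the proof of Lemma \ref{key_lemma}. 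Your ``support mod $1$'' argument tracks exponents only; it is necessary but nowhere near sufficient to show that $(\mathcal{E}H)\mid T_{m^2}$ transforms under $\Gamma_0(p)$ with the multiplier of $\mathcal{E}$ times $\chi$.

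Nor does retreating to the honest level $pD^2$, where $T_{m^2}$ really does preserve $M_{s}\left(\Gamma_0(pD^2),\left(\frac{-p}{\cdot}\right)\right)$, rescue the two-cusp reduction. There the quotient $(f\mid T_{m^2})/(\eta(Dz)\eta(Dpz))^{y}$ must be shown holomorphic at \emph{every} cusp of $\Gamma_0(pD^2)$ --- for $p=5$, $D=4$ the level is $80$, with far more than two cusps, and Proposition \ref{CE} shows the eta-quotient vanishes at many of them --- and one additionally needs a descent lemma: a level-$pD^2$ form with exponents $\equiv y\pmod{D}$ and the correct character is of the shape $(\eta(Dz)\eta(Dpz))^{y}H(Dz)$ with $H$ of level $p$. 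Your $V_D$-commutation verifies identities of $q$-expansions but silently assumes this descent, which is precisely equivalent to the multiplier statement you skipped. (Smaller items of the same kind: the claimed commutation of $T_{m^2}$ with $W_p$ ``up to a scalar'' in the multiplier setting, and the identification $\overline{\chi}=\chi$ after Fricke, are also part of the unproved bookkeeping, though the latter is harmless since $\chi$ is quadratic.) In short: the strategy is the right one and the cusp estimates are the right estimates, but the modularity-with-correct-multiplier of $(\mathcal{E}H)\mid T_{m^2}$ is assumed rather than proved, and it is the heart of the theorem.
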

The work \cite{warnockthesis} precisely describes the subspace of $M_{k + y}\left(\Gamma_0(pD^2), \left(\frac{-p}{\bigcdot}\right)\right)$ to which an integer weight Hecke operator $T_n$ maps $\mathcal{A}_{p, y, k, \chi}$; it is generally {\it not} to $\mathcal{A}_{p, y, k, \chi}$ itself.  
  The subspace $\mathcal{A}_{p, y, k, \chi}$ has small dimension relative to the ambient space $M_{k + y}\left(\Gamma_0(pD^2), \left(\frac{-p}{\bigcdot}\right)\right)$.    Standard formulas (see Section 7.4 of \cite{Cohen2017ModularFA} for example) give dimensions of the relevant spaces in Table III~below.

\renewcommand{\arraystretch}{1.5} % Adjust the vertical spacing
\setlength{\tabcolsep}{12pt} % Adjust the horizontal spacing
\begin{table}[ht]
  \centering
  %\resizebox{1.3\textwidth}{!}{%
 % \resizebox{\textwidth}{!}{%
\begin{tabular}{|c|c|c|}
\hline
$p$ & $\text{dim}(\mathcal{A}_{p, y, k, \chi})$ & $\text{dim}(M_{s}\left(\Gamma_0(pD^2),\left(\frac{-p}{\bigcdot}\right)\right)$ \\
\hline
$2$ & $\left\lfloor\frac{k}{4}\right\rfloor + 1$ & $16 (k + y) - 8$ \\
\hline
$3$ & $\left\lfloor\frac{k}{3}\right\rfloor + 1$ & $18 (k + y) - 9$  \\
\hline
$5$ & $2\left\lfloor\frac{k}{4}\right\rfloor + 1$ & $12 (k + y) - 6$. \\
\hline
\end{tabular}
 % }
 \vskip.15in
  \caption{Dimension Comparison}
\vspace{-.35in}
\end{table}
Theorem \ref{warnock_boylan} allows us to explicitly describe the integers $J$ and $N$ in Theorem \ref{E5}.  To see this, we let $d =~d_{p,\ell, j}$ be the dimension of $\mathcal{A}_{p, y, k, \chi}$ as in Table III, and we let $\{f_1,\dots, f_d\}$ be a $\Z$-basis for the $\Z$-module $\mathcal{A}_{p, y, k, \chi}\cap~\Z[[q]]$.  With $\vec{f}=\langle f_1,\dots, f_d\rangle^{\text{t}}$, the $T_{m^2}$-invariance of $\mathcal{A}_{p, y, k, \chi}$ implies that
there exists $M\in \text{Mat}_{d\times d}(\Z)$ such that $\vec{f}\mid T_{m^2} = M\vec{f}$.  We let ${I}_{d}$ and ${0}_{d}$ denote the identity and zero matrices in $\text{Mat}_{d\times d}(\Z)$, and we let 
\begin{equation}
A = \begin{pmatrix} M - \left(\frac{-p}{m}\right) m^{k + y - 1} I_{d} & -m^{2(k + y) - 2} {I}_{d} \\ I_{d}  & {0}_{d} \end{pmatrix}\in \text{GL}_{\,2d}(\Z). \label{A_matrix}
\end{equation}
In this notation, the integer $J$ in Theorem \ref{E5} arises as the order of $A$ in $\text{PGL}_{2d}(\Z/\ell^j\Z)$, and the integer $N$ is the order of $A$ in $\text{GL}_{2d}(\Z/\ell^j\Z)$.  

We also note that, modulo $\ell^j$, the generating functions in Theorem \ref{ahlgren_boylan} lie in subspaces of half-integral weight holomorphic modular forms of type $S_{r, w} = \{\eta(24z)^rF(24z) : F(z) \in M_w(\Gamma_0(1))\}$.  The subspaces $S_{r,w}$ are invariant under the half-integral weight Hecke operators.  The integers $K$ and $M$ in Theorem \ref{Yang} arise exactly as $J$ and $N$ do in Theorem \ref{E5}: as orders of matrices in $\text{PGL}_{2t}(\Z/\ell^j\Z)$ and $\text{GL}_{2t}(\Z/\ell^j\Z)$ which encode the action of Hecke operators with index $m^2$ on a $\Z$-basis for the $\Z$-module $S_{r, w}\cap\Z[[q]]$.  Since the Hecke operators $T_{m^2}$ are different in integral and half-integral weights, the matrices needed to define $K$ and $M$ in Theorem \ref{Yang} and $J$ and $N$ in Theorem \ref{E5} have different general forms.  

The outline for the rest of the paper is as follows.  In Section 2, we provide necessary background on modular forms.  In Section 3, we prove our main theorems, Theorem \ref{MT1} and \ref{E5}.  We also provide explicit numerical examples of our theorems. 

\vskip.15in

\noindent
{\bf Acknowledgment.}  The authors thank the referee for their helpful comments and suggestions and especially for identifying parts of the paper, notably the proof of Theorem \ref{ord_vanish_G}, which needed  more clarification than the authors provided initially.
 
 %%%%%Background%%%%%%%%
 
\section{Background on Modular Forms}

For background on modular forms, one may consult \cite{Cohen2017ModularFA}, for example.  

\subsection{Operators on Spaces of Modular Forms}
As operators on spaces of modular forms play a central role in our work, we recall definitions of the operators that we use, starting with the slash operator.  Throughout this section, we let $N$ and $k$ be integers with $N\geq 1$, and we let $\psi$ be a Dirichlet character modulo $N$.  For functions $f$ on the complex upper half-plane and for all $\gamma = \begin{pmatrix} a & b \\ c & d\end{pmatrix} \in \text{GL}_2^{+}(\mathbb{R})$, we define the slash operator in weight $k$ by 
\begin{equation} \label{SLO}
\left(f\mid _k\gamma\right)(z) = \left(\text{det}\gamma\right)^{\frac{k}{2}}(cz+d)^{-k}f(\gamma z).
\end{equation}
With $q = e^{2\pi iz}$, we let $f = \sum a(n)q^n \in M_k(\Gamma_0(N),\psi)$.
For all integers $d\geq 1$, we define the $U$- and $V$-operators by 
\begin{align}
& f\mid V_d = d^{-\frac{k}{2}}f\mid_k\begin{pmatrix} d & 0 \\ 0 & 1 \end{pmatrix}, \label{VOP} \\
& f\mid U_d = d^{\frac{k}{2} - 1}\sum_{j = 0}^{d - 1}f\mid_k \begin{pmatrix} 1 & j \\ 0 & d\end{pmatrix}. \label{UOP}
\end{align}
On $q$-expansions, we have 
\begin{equation} \label{UVq}
f\mid V_d = \sum a(n)q^{dn}, \ \ \ 
f\mid U_d = \sum a(dn)q^n.  
\end{equation}
These operators map spaces of modular forms as follows: 
\begin{align}
& V_d : M_k(\Gamma_0(N),\psi) \longrightarrow M_k(\Gamma_0(dN),\psi), \label{vmap} \\
& U_d : M_k(\Gamma_0(N),\psi)\longrightarrow \begin{cases} M_k(\Gamma_0(dN), \psi) & d\nmid N, \\ M_k(\Gamma_0(N), \psi) & d\mid N.\label{umap}\end{cases}
\end{align}

\noindent
We now turn to Hecke operators.  For all integers $n\geq 1$ we define the Hecke operator $T_n = T_{n, k,\psi}$ on $M_k(\Gamma_0(N),\psi)$~by
\begin{equation}\label{E4}
T_{n} = \sum_{d \mid n} \psi(d) d^{k - 1} V_{d} \circ U_{n/d}.
\end{equation}
When $\ell$ is prime, $f\mid T_{\ell}$ has $q$-expansion
\[
f \mid T_{\ell} = \sum\left(a(\ell n) + \psi(\ell){\ell}^{k - 1}a\left(\frac{n}{\ell}\right)\right)q^n,
\]
where $a\left(\frac{n}{\ell}\right) = 0$ when $\ell\nmid n$.  The Hecke operators preserve 
$M_k(\Gamma_0(N),\psi)$ and its subspace of cusp forms.  

We require further operators defined using \eqref{SLO}.  With 
\begin{align}
H_N = \begin{pmatrix} 0 & -1 \\ N & 0\end{pmatrix}, \label{Fricke}
\end{align}
we define the Fricke involution $f\mapsto f\mid_k H_N$, which maps $M_k(\Gamma_0(N),\psi)
\rightarrow M_k(\Gamma_0(N),\overline{\psi})$.  When $\gcd(\ell, N/\ell) = 1$, we define 
\begin{equation} \label{ATI}
W_{\ell}^{N} = \begin{pmatrix} \ell & a \\ N & b\ell\end{pmatrix},
\end{equation}
where $a$, $b\in \Z$ and $\text{det}(W_{\ell}^N) = \ell$.  If $\psi$ is defined modulo $N/\ell$, then Lemma 2 of \cite{Li1975NewformsAF} implies that $\mid_k W_{\ell}^{N}$ maps $M_k(\Gamma_0(N),\psi)$ to itself and does not depend on the choice of $a$ and $b$.  
Keeping our hypotheses on $\ell$ and $\psi$, we define the trace operator on $M_k(\Gamma_0(N),\psi)$ by
\begin{equation}\label{trace}
\text{Tr}_{N/\ell}^{N}(f) = f + \psi(\ell){\ell}^{1 - \frac{k}{2}}f\mid_k W_{\ell}^{N}\mid U_{\ell}.
\end{equation}
\noindent
We also recall the notion of twisting by a Dirichlet character $\chi$ modulo $M\geq 1$:  
\begin{equation} \label{twist}
f \otimes \chi = \sum \chi(n)a(n)q^n \in M_k(\Gamma_0(NM^2), \psi\chi^2).  
\end{equation}

We record some further properties of the operators we defined in this section.  
We will use the following elementary property of the $U$-operator on $q$-series.  
\begin{proposition}\label{U_factor}
Let $d\geq 1$ in $\mathbb{Z}$, and let $f(q)$, $g(q)\in \mathbb{C}[[q]]$.  Then we have 
\[
(f(q)g(q^d))\mid U_{d} = f(q)\mid U_{d} \bigcdot g(q).
\]  
\end{proposition}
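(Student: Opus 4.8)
The plan is to prove Proposition \ref{U_factor} by a direct computation on $q$-expansions, using the definition of the $U$-operator on power series given in \eqref{UVq}.  The key observation is that $U_d$ extracts the coefficients in arithmetic progressions: if $F(q) = \sum_n c(n) q^n$, then $F \mid U_d = \sum_n c(dn) q^n$.  The statement is purely formal, so it suffices to track how the coefficients of the product $f(q) g(q^d)$ behave under this extraction; no modularity is involved.

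First I would write $f(q) = \sum_{a \geq 0} \alpha(a) q^a$ and $g(q) = \sum_{b \geq 0} \beta(b) q^b$, so that $g(q^d) = \sum_{b \geq 0} \beta(b) q^{db}$.  Multiplying, the product has the form
\begin{equation*}
f(q) g(q^d) = \sum_{a, b \geq 0} \alpha(a)\beta(b)\, q^{a + db}.
\end{equation*}
The coefficient of $q^n$ in this product is $\sum_{a + db = n} \alpha(a)\beta(b)$, where the sum ranges over all nonnegative $a, b$ with $a + db = n$.  Applying $U_d$ replaces $n$ by $dn$, so the coefficient of $q^n$ in $(f(q) g(q^d)) \mid U_d$ is $\sum_{a + db = dn} \alpha(a) \beta(b)$.

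The crux is then a change-of-variables argument:  in the condition $a + db = dn$, reducing modulo $d$ forces $d \mid a$, say $a = d a'$, whence the condition becomes $a' + b = n$.  Thus the coefficient of $q^n$ equals $\sum_{a' + b = n} \alpha(d a') \beta(b)$, which is precisely the coefficient of $q^n$ in the Cauchy product of $f(q) \mid U_d = \sum_{a'} \alpha(d a') q^{a'}$ with $g(q) = \sum_b \beta(b) q^b$.  Since two power series agreeing in every coefficient are equal, this yields $(f(q) g(q^d)) \mid U_d = (f(q) \mid U_d)\, g(q)$, as claimed.

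I do not anticipate a genuine obstacle here, as the result is an elementary identity of formal power series; the only point requiring care is the bookkeeping in the change of variables, namely the deduction that $a + db = dn$ forces $d \mid a$.  This should be stated cleanly so that the reindexing $a \mapsto a' = a/d$ is unambiguous.  One could alternatively phrase the proof operator-theoretically via \eqref{UOP} and the slash operator, but the formal-series computation is the most transparent and self-contained route.
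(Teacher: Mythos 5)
Your proof is correct: the change of variables is valid, since $a+db=dn$ indeed forces $d\mid a$, and the resulting identity of coefficients $\sum_{a'+b=n}\alpha(da')\beta(b)$ matches the Cauchy product of $f\mid U_d$ with $g$. The paper states Proposition \ref{U_factor} without proof, treating it as elementary, and your formal power-series computation is exactly the standard justification it implicitly relies on, so there is nothing to add.
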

\noindent The trace and $U$-operators play a role in lowering levels.  
\begin{proposition}[Lemmas 1 and 3 of \cite{Li1975NewformsAF}]\label{level_lower}
Let $N\geq 1$ be an integer, let $\ell$ be prime with $\ell\mid N$, and suppose that $\psi$ is a character modulo $N/\ell$.
\begin{enumerate}
\item 
If $\ell^2\mid N$, then we have
$U_{\ell} : M_k(\Gamma_0(N),\psi)\rightarrow M_k(\Gamma_0(N/\ell), \psi)$.
\item 
If $\ell \mid N$ and $\gcd(\ell, N/\ell) = 1$, then we have $\text{Tr}_{N/\ell}^{N} : M_k(\Gamma_0(N), \psi)\rightarrow M_k(\Gamma_0(N/\ell), \psi)$.
\end{enumerate}
\end{proposition}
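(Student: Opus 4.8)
The plan is to prove both parts by a direct coset computation with the slash operator \eqref{SLO}, exploiting the fact that the two divisibility hypotheses play complementary roles: in part (1) the condition $\ell^2\mid N$ forces $\ell\mid c$ for every $\gamma=\begin{pmatrix}a&b\\c&d\end{pmatrix}\in\Gamma_0(N/\ell)$, while in part (2) the condition $\gcd(\ell,N/\ell)=1$ is exactly what lets the Atkin--Lehner matrix $W_\ell^N$ in \eqref{ATI} exist with $\det=\ell$.

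For part (1), it suffices to show that $(f\mid U_\ell)\mid_k\gamma=\psi(d)(f\mid U_\ell)$ for every $\gamma=\begin{pmatrix}a&b\\c&d\end{pmatrix}\in\Gamma_0(N/\ell)$; holomorphy at the cusps of $\Gamma_0(N/\ell)$ is inherited from $f\mid U_\ell\in M_k(\Gamma_0(N),\psi)$, since every $\Gamma_0(N/\ell)$-cusp is also a $\Gamma_0(N)$-cusp. Writing $f\mid U_\ell$ via \eqref{UOP}, I would for each $j$ decompose $\begin{pmatrix}1&j\\0&\ell\end{pmatrix}\gamma=\gamma_j\begin{pmatrix}1&j'\\0&\ell\end{pmatrix}$ and solve for the entries of $\gamma_j$: matching entries forces its lower-left to be $\ell c$ (so $N\mid\ell c$ by $(N/\ell)\mid c$, giving $\gamma_j\in\Gamma_0(N)$), its lower-right to be $d-cj'$, and integrality of the upper-right to require $(a+jc)j'\equiv b+jd\pmod\ell$. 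Here is where $\ell^2\mid N$ enters: it gives $\ell\mid c$, so $a+jc\equiv a\pmod\ell$, and $ad-bc=1$ then forces $a$ to be a unit mod $\ell$; hence $j'$ is uniquely determined mod $\ell$, and since $a^{-1}d$ is a unit the assignment $j\mapsto j'$ is a bijection of $\{0,\dots,\ell-1\}$. Because $(N/\ell)\mid c$ we have $d-cj'\equiv d\pmod{N/\ell}$, so the nebentypus factor contributed by each $\gamma_j\in\Gamma_0(N)$ is $\psi(d)$; summing over the reindexed $j'$ recovers $f\mid U_\ell$ and yields $\psi(d)(f\mid U_\ell)$.

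For part (2), I would first use $\gcd(\ell,N/\ell)=1$ to choose $W_\ell^N=\begin{pmatrix}\ell&a\\N&b\ell\end{pmatrix}$ with $b\ell-a(N/\ell)=1$, and then compute $W_\ell^N\begin{pmatrix}1&j\\0&\ell\end{pmatrix}=\ell\,g_j$ where $g_j=\begin{pmatrix}1&j+a\\N/\ell&(N/\ell)j+b\ell\end{pmatrix}$ has determinant $b\ell-a(N/\ell)=1$ and hence lies in $\Gamma_0(N/\ell)$. Since scalar matrices act trivially under $\mid_k$, formula \eqref{UOP} gives $f\mid_k W_\ell^N\mid U_\ell=\ell^{\,k/2-1}\sum_{j=0}^{\ell-1}f\mid_k g_j$, so by \eqref{trace} we have $\mathrm{Tr}_{N/\ell}^N(f)=f+\psi(\ell)\sum_{j=0}^{\ell-1}f\mid_k g_j$. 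The main step is then to verify that $\{I,g_0,\dots,g_{\ell-1}\}$ is a full set of representatives for $\Gamma_0(N)\backslash\Gamma_0(N/\ell)$, whose index is $\ell+1$ because $\ell\nmid N/\ell$; granting this, $\mathrm{Tr}_{N/\ell}^N$ is a normalization of the averaging trace over these cosets. Right multiplication by any $\gamma\in\Gamma_0(N/\ell)$ permutes the cosets, and using $b\ell\equiv1\pmod{N/\ell}$ to collapse the character values on the $g_j$ one checks that the weights $1$ and $\psi(\ell)$ transform so that $\mathrm{Tr}_{N/\ell}^N(f)\mid_k\gamma=\psi(d)\,\mathrm{Tr}_{N/\ell}^N(f)$; holomorphy at the cusps is immediate since the result is a finite sum of $\mid_k$-translates of the holomorphic form $f$.

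The main obstacle in both parts is the character and coset bookkeeping rather than any deep input. Concretely, the crux of part (1) is the invertibility of $a$ modulo $\ell$, which holds precisely because $\ell^2\mid N$ (and fails when $\ell\,\|\,N$, consistent with $U_\ell$ not lowering the level in that case); the crux of part (2) is confirming that the explicit combination in \eqref{trace}, with its constant $\psi(\ell)\ell^{1-k/2}$, reproduces the coset-sum trace exactly, which requires the identity $b\ell\equiv1\pmod{N/\ell}$ together with the complementary hypothesis $\gcd(\ell,N/\ell)=1$. Once these matchings are in place, the transformation law and holomorphy follow from the standard properties of the slash operator.
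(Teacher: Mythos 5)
Your part (1) is correct and is the standard argument: for $\gamma=\begin{pmatrix}a&b\\c&d\end{pmatrix}\in\Gamma_0(N/\ell)$ the decomposition $\begin{pmatrix}1&j\\0&\ell\end{pmatrix}\gamma=\gamma_j\begin{pmatrix}1&j'\\0&\ell\end{pmatrix}$ works exactly as you say; $\ell^2\mid N$ gives $\ell\mid c$, hence $a$ (and $d$) are units mod $\ell$, so $j'\equiv a^{-1}(b+jd)\pmod{\ell}$ is well defined and $j\mapsto j'$ is a bijection; $\gamma_j$ has lower-left $\ell c\equiv 0\pmod N$ and lower-right $d-cj'\equiv d\pmod{N/\ell}$, so each summand picks up exactly $\psi(d)$, and holomorphy at the cusps is inherited since $f\mid U_\ell\in M_k(\Gamma_0(N),\psi)$. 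Note the paper itself does not prove this proposition (it quotes Lemmas 1 and 3 of Li), so your direct verification is a legitimate, self-contained route. Your coset setup for part (2) is also right: $W_\ell^N\begin{pmatrix}1&j\\0&\ell\end{pmatrix}=\ell g_j$ with $g_j=\begin{pmatrix}1&j+a\\N/\ell&(N/\ell)j+b\ell\end{pmatrix}\in\Gamma_0(N/\ell)$, and $\{I,g_0,\dots,g_{\ell-1}\}$ is a full set of representatives for $\Gamma_0(N)\backslash\Gamma_0(N/\ell)$: the index is $\ell+1$ because $\ell\nmid N/\ell$, and $g_ig_j^{-1}$ has lower-left entry $(N/\ell)^2(j-i)$, which lies in $N\mathbb{Z}$ only if $\ell\mid(j-i)$.

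The gap is in the character bookkeeping of part (2), precisely the step you defer to "one checks." Your own observation that $b\ell\equiv 1\pmod{N/\ell}$ collapses the lower-right entries $d_{g_j}=(N/\ell)j+b\ell$ to $1$ modulo $N/\ell$, so the character values on the nonidentity cosets collapse to $\psi(1)=1$, not to $\psi(\ell)$. The unique (up to overall scalar) combination $\sum_i w_i\,f\mid_k e_i$ over coset representatives that transforms on $\Gamma_0(N/\ell)$ with nebentypus $\psi$ has weights $w_i=\overline{\psi}(d_{e_i})$, which for your representatives are all equal to $1$; hence the equivariant operator is $f+\ell^{1-k/2}f\mid_k W_\ell^N\mid U_\ell$ with \emph{no} factor $\psi(\ell)$. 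One can see the failure directly: for $\gamma=\begin{pmatrix}A&B\\C&D\end{pmatrix}\in\Gamma_0(N/\ell)\setminus\Gamma_0(N)$, the identity coset moves to the coset of some $g_{j_0}$, exactly one $g_{j_*}$ moves to the identity coset, and every slash picks up the factor $\psi(D)$; comparing both sides of $\bigl(f+c\sum_j f\mid_k g_j\bigr)\mid_k\gamma=\psi(D)\bigl(f+c\sum_j f\mid_k g_j\bigr)$ forces $(c-1)\bigl(f-f\mid_k g_{j_0}\bigr)=0$, so $c=1$ for generic $f$. Thus the weighting $(1,\psi(\ell))$ you propose is \emph{not} equivariant whenever $\psi(\ell)\neq 1$, and your claimed verification cannot close as stated. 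The discrepancy is a convention issue: with a different normalization of $W_\ell^N$ (upper-left entry $\ell x$ with $x\not\equiv 1\pmod{N/\ell}$, as in some treatments following Li and Atkin--Lehner) the equivariant weight becomes $\psi(x)$, which is where a factor such as $\psi(\ell)$ or $\overline{\psi}(\ell)$ can legitimately appear; but with the paper's normalization \eqref{ATI} the factor must be $1$. This is invisible in the paper's application, where $\psi$ is trivial, but to make your proof correct you must either drop the $\psi(\ell)$ in the combination you verify or adjust the normalization of $W_\ell^N$ accordingly, and in either case carry out the transformation check rather than assert it.
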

\noindent
We need a result on the commutation of $U$ as in \eqref{UOP} and $W$ as in \eqref{ATI}.
\begin{proposition}[Proposition 1.3 of \cite{atkin-li}] \label{atkin-li}
Let $f\in M_k(\Gamma_0(N),\psi)$ and let $\ell\neq p$ be primes such that $\ell, p\mid N$ and 
$\gcd(\ell, N/\ell) = 1$. Then we have
\begin{equation*}
 f \mid U_p\mid W_{\ell}^{N} = \psi_{\ell}(p) \: f \mid W_{\ell}^{N} \mid U_{p},   
\end{equation*}
where $\psi_{\ell}$ and $\psi_{N/\ell}$ are characters with moduli $\ell$ and $N/\ell$ and satisfy $\psi = \psi_{\ell}\psi_{N/\ell}$.
\end{proposition}
\noindent 
We also need a result on the commutation of $V$ as in \eqref{VOP} and $W$ as in \eqref{ATI}.
\begin{proposition} \label{Commutativity}
Let $\ell$ be prime, let $k$ and $t$ be integers with $t\geq 1$ and $\ell\nmid t$, let $\psi$ be a Dirichlet character modulo $\ell$, and let $f \in M_k(\Gamma_0(\ell),\psi)$.  Then we have 
\begin{enumerate}
\item 
$f\mid V_t\mid_k W_{\ell}^{t\ell} = f\mid_k H_{\ell}\mid V_t$ and 
\item 
$f\mid_k W_{\ell}^{t\ell} = \psi(t) f\mid_k H_{\ell}$.  
\end{enumerate}
\end{proposition}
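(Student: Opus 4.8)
The plan is to reduce the asserted identity of functions on $\mathbb{H}$ to a single identity of weight-$k$ slash operators attached to explicit $2\times 2$ matrices, and then to relate those two matrices by left multiplication by an element of $\Gamma_0(\ell)$. Writing $\sigma_t = \begin{pmatrix} t & 0 \\ 0 & 1 \end{pmatrix}$, the definition \eqref{VOP} gives $f\mid V_t = t^{-k/2}\,f\mid_k\sigma_t$. Since the operator in \eqref{SLO} is a right action of $\text{GL}_2^+(\mathbb{R})$ — the automorphy factor is a cocycle and $(\det)^{k/2}$ is multiplicative on positive reals — both sides of the claim expand as
\[
f\mid V_t\mid_k W_\ell^{t\ell} = t^{-k/2}\,f\mid_k(\sigma_t W_\ell^{t\ell}), \qquad f\mid_k H_\ell\mid V_t = t^{-k/2}\,f\mid_k(H_\ell\sigma_t).
\]
After canceling the common scalar $t^{-k/2}$, it therefore suffices to prove $f\mid_k(\sigma_t W_\ell^{t\ell}) = f\mid_k(H_\ell\sigma_t)$.

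First I would compute the two matrix products. Writing $W_\ell^{t\ell} = \begin{pmatrix} \ell & a \\ t\ell & b\ell \end{pmatrix}$ as in \eqref{ATI}, the determinant condition $\det W_\ell^{t\ell} = \ell$ reads $b\ell - at = 1$, and a direct multiplication gives
\[
\sigma_t W_\ell^{t\ell} = \begin{pmatrix} t\ell & ta \\ t\ell & b\ell \end{pmatrix}, \qquad H_\ell\sigma_t = \begin{pmatrix} 0 & -1 \\ t\ell & 0 \end{pmatrix},
\]
both of determinant $t\ell$. The key step is then to observe that these two matrices differ on the left by an element of $\Gamma_0(\ell)$. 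Setting $\gamma = (\sigma_t W_\ell^{t\ell})(H_\ell\sigma_t)^{-1}$, a short computation yields
\[
\gamma = \begin{pmatrix} -ta & 1 \\ -b\ell & 1 \end{pmatrix},
\]
which has $\det\gamma = b\ell - at = 1$, lower-left entry divisible by $\ell$, and lower-right entry equal to $1$; hence $\gamma\in\Gamma_0(\ell)$. Because $f\in M_k(\Gamma_0(\ell),\psi)$, the transformation law gives $f\mid_k\gamma = \psi(1)f = f$ — the nebentypus factor is trivial precisely because the lower-right entry of $\gamma$ is $1$. Using the right-action property once more, $f\mid_k(\sigma_t W_\ell^{t\ell}) = (f\mid_k\gamma)\mid_k(H_\ell\sigma_t) = f\mid_k(H_\ell\sigma_t)$, which is the required identity.

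The computation itself is elementary, so the only point demanding care is conceptual: I would emphasize that we do \emph{not} invoke the fact that $W_\ell^{t\ell}$ preserves $M_k(\Gamma_0(t\ell),\psi)$. The Atkin--Lehner lemma cited after \eqref{ATI} requires $\psi$ to be defined modulo $t\ell/\ell = t$, whereas here $\psi$ is a character modulo $\ell$, so those hypotheses are not in force. The proposition is instead a pointwise identity of functions on $\mathbb{H}$, and the potential obstruction — a nontrivial nebentypus contribution from the connecting matrix $\gamma$ — vanishes automatically since $\gamma$ can be taken with lower-right entry $1$. The hypothesis $\ell\nmid t$ is used only to ensure that $W_\ell^{t\ell}$ is well defined via \eqref{ATI}, i.e.\ that $\gcd(\ell, t\ell/\ell) = 1$.
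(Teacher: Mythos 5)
Your proof is correct and takes essentially the same route as the paper's: both reduce the claim to the matrix factorization $\begin{pmatrix} t & 0 \\ 0 & 1 \end{pmatrix} W_{\ell}^{t\ell} = \gamma\, H_{\ell} \begin{pmatrix} t & 0 \\ 0 & 1 \end{pmatrix}$ with the identical connecting matrix $\gamma = \begin{pmatrix} -at & 1 \\ -b\ell & 1 \end{pmatrix} \in \Gamma_0(\ell)$, whose lower-right entry $1$ makes the nebentypus factor trivial. The only cosmetic difference is that you derive $\gamma$ as $(\sigma_t W_{\ell}^{t\ell})(H_{\ell}\sigma_t)^{-1}$ where the paper simply exhibits the factorization, and your closing remark correctly notes that no Atkin--Lehner theory is invoked.
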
 
\begin{proof}
Using \eqref{VOP} and \eqref{ATI}, we compute 
\begin{align*}
f\mid V_{t}\mid_k W_{\ell}^{t\ell}& =t^{-k/2}f\Big|_k
\left[\begin{pmatrix} t & 0 \\ 0 & 1 \end{pmatrix}\begin{pmatrix}   
\ell & a \\ t\ell & b \ell \end{pmatrix}\right] 
= t^{-k/2}f \Big|_k \begin{pmatrix} t \ell & at \\ t\ell & b \ell \end{pmatrix} \\
& = t^{-k/2}f \Big|_k \left[\begin{pmatrix} -at & 1 \\ -b \ell & 1 \end{pmatrix} \begin{pmatrix} 0 & -1 \\ \ell & 0 \end{pmatrix} 
\begin{pmatrix} t & 0 \\ 0 & 1 \end{pmatrix} \right] 
= t^{-k/2}f \Big|_k 
\left[\begin{pmatrix} 0 & -1 \\ \ell & 0 \end{pmatrix} 
\begin{pmatrix} t & 0 \\ 0 & 1 \end{pmatrix} \right] \\ 
&=  f \mid_k H_{\ell} \mid V_{t}.
\end{align*}
Similarly, we use \eqref{Fricke} and \eqref{ATI} to compute 
\begin{align*}
f\mid_k W_{\ell}^{t\ell} = f\mid_k\begin{pmatrix}\ell & a \\ t\ell & b\ell\end{pmatrix} = f\mid_k\begin{pmatrix} -a & 1 \\-b\ell & t\end{pmatrix} \begin{pmatrix} 0 & -1 \\ \ell & 0 \end{pmatrix} = \psi(t) f\mid_k H_{\ell}.
\end{align*}
\end{proof}

\subsection{Some Properties of Eta-Quotients}
As the Dedekind eta-function \eqref{eta1} plays a central role in our work, we review some of its properties here. It is a modular form of weight 1/2 on $\text{SL}_{2}(\mathbb{Z})$ with respect to a multiplier $\epsilon_{a,b,c,d}$ with values in the group of $24$th roots of unity: for all $z\in \mathbb{H}$ and for all $\gamma = \begin{pmatrix} a & b \\ c & d \end{pmatrix} \in \text{SL}_2(\Z)$, we have 

\begin{align} \label{ETA}
\eta(\gamma z) = \epsilon_{a,b,c,d}  (cz + d)^{1/2}  \eta(z). 
\end{align}
For the precise definition of $\epsilon_{a, b, c, d}$, see Theorem 5.8.1 of \cite{Cohen2017ModularFA}.  For example, when $\gamma = \begin{pmatrix} 0 & -1 \\ 1 & 0\end{pmatrix}$, we have 
\begin{align}
\eta\left(-\frac{1}{z}\right) = \sqrt{\frac{z}{i}}\,\eta(z). \label{eta_S}
\end{align}

The eta-function is a building block for modular forms.  Let $N\geq 1$.  An eta-quotient of level $N$ is a function of the form
\begin{equation}
\label{etaquot}
f(z) = \prod_{\delta \mid N} \eta(\delta z)^{r_{\delta}},
\end{equation}
where $\delta$ and $r_{\delta}$ are integers with $\delta \geq 1$. 
The following proposition (see for example, Proposition 5.9.2 of~\cite{Cohen2017ModularFA}) gives criteria for an eta-quotient to be a weakly holomorphic modular form. 

\begin{proposition}\label{GHN}
Let $f(z)$ be an eta-quotient of level $N$ as in \eqref{etaquot} with $k = \displaystyle{\frac{1}{2} \sum_{\delta\mid N}r_{\delta}\in \Z}$.  Suppose that $f(z)$ satisfies 
\[
\sum_{\delta \mid N}\delta r_{\delta} \equiv 0 \Mod{24} \ \ \text{and} \ \ 
N\sum_{\delta\mid N} \frac{r_{\delta}}{\delta} \equiv 0\Mod{24}.  
\]
Then for all $\gamma = \begin{pmatrix} a & b \\ c & d\end{pmatrix} \in \Gamma_0(N)$, we have 
$f(\gamma z) = \chi(d)(cz + d)^k f(z)$, where $\chi$ is defined by $\chi(d) = \left(\frac{(-1)^ks}{d}\right)$ with $s = \displaystyle{\prod_{\delta\mid N}\delta^{r_{\delta}}}$.
\end{proposition}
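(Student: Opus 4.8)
The plan is to reduce the transformation law for the full eta-quotient to the transformation law \eqref{ETA} for a single factor $\eta(\delta z)$, and then to show that the two congruence conditions force the accumulated multiplier to collapse to the single quadratic character $\chi(d)$. Throughout, fix $\gamma = \begin{pmatrix} a & b \\ c & d\end{pmatrix}\in\Gamma_0(N)$; the translation case $c = 0$ is immediate once one knows that the $q$-expansion of $f$ has integer exponents (guaranteed by $\sum_\delta \delta r_\delta \equiv 0 \pmod{24}$), so assume $c \neq 0$.

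First I would handle one divisor $\delta \mid N$ at a time. Because $\delta \mid N$ and $N \mid c$, we have $\delta \mid c$, so
\[
\gamma_\delta = \begin{pmatrix} a & b\delta \\ c/\delta & d\end{pmatrix}
\]
has integer entries and determinant $ad - bc = 1$, hence lies in $\mathrm{SL}_2(\mathbb{Z})$. A one-line M\"obius computation gives $\delta(\gamma z) = \gamma_\delta(\delta z)$, so \eqref{ETA} applied to $\gamma_\delta$ yields
\[
\eta(\delta \gamma z) = \epsilon_{a,\,b\delta,\,c/\delta,\,d}\,\bigl((c/\delta)(\delta z) + d\bigr)^{1/2}\eta(\delta z) = \epsilon_{a,\,b\delta,\,c/\delta,\,d}\,(cz + d)^{1/2}\eta(\delta z).
\]
Raising to the $r_\delta$ power and multiplying over $\delta \mid N$ then produces
\[
f(\gamma z) = \left(\prod_{\delta \mid N}\epsilon_{a,\,b\delta,\,c/\delta,\,d}^{\,r_\delta}\right)(cz + d)^{\frac{1}{2}\sum_\delta r_\delta}\,f(z),
\]
and since $k = \tfrac12\sum_\delta r_\delta \in \mathbb{Z}$ the automorphy factor is exactly $(cz+d)^k$. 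It therefore remains only to identify the multiplier product with $\chi(d) = \left(\frac{(-1)^k s}{d}\right)$.

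The heart of the proof is this last identification, and it is where I expect the real work to be. I would substitute the explicit value of $\epsilon_{a,b,c,d}$ from Theorem 5.8.1 of \cite{Cohen2017ModularFA}, writing each factor as a Jacobi symbol times an exponential $\exp\!\bigl(\tfrac{\pi i}{12}\,\Phi_\delta\bigr)$ whose phase $\Phi_\delta$ is a $\mathbb{Z}$-linear combination of $(a+d)(c/\delta)$, $b\delta d$, and $c/\delta$ (with a correction governed by a Dedekind sum). When the phases are combined with weights $r_\delta$, the sums $\sum_\delta \delta r_\delta$ and $\sum_\delta (N/\delta) r_\delta$ emerge as precisely the coefficients that must vanish modulo $24$; the hypotheses $\sum_\delta \delta r_\delta \equiv 0$ and $N\sum_\delta r_\delta/\delta \equiv 0 \pmod{24}$ are exactly the conditions that reduce the total exponential to $1$. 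What remains is a product of Jacobi symbols, which one collapses using the multiplicativity of the symbol and quadratic reciprocity to the single symbol $\left(\frac{(-1)^k s}{d}\right)$ with $s = \prod_\delta \delta^{r_\delta}$.

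The main obstacle is the bookkeeping in this multiplier computation: the explicit form of $\epsilon_{a,b,c,d}$ splits into cases according to the parity of $c/\delta$ and the sign conventions for the half-integral power, so the Jacobi-symbol/exponential decomposition is not uniform across the divisors $\delta$, and one must carefully track how reciprocity redistributes symbols so that the two congruence conditions align with the vanishing of the combined phase. Conceptually, the two conditions are dual: $\sum_\delta \delta r_\delta \equiv 0$ controls behavior at the cusp $\infty$, while $N\sum_\delta r_\delta/\delta \equiv 0$ controls behavior at the cusp $0$ (seen after applying the Fricke involution \eqref{Fricke}), and both must enter to pin down the multiplier on all of $\Gamma_0(N)$. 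Once the transformation law is in hand, holomorphy on $\mathbb{H}$ is immediate because $\eta$ is nonvanishing there, so $f$ is at worst weakly holomorphic, with its precise order at each cusp given by the standard eta-quotient cusp-order formula.
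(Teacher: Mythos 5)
The paper offers no proof of this proposition: it is quoted verbatim from Proposition 5.9.2 of \cite{Cohen2017ModularFA}, so there is nothing internal to compare against. Your argument is exactly the standard Ligozat/Gordon--Hughes--Newman proof underlying that citation: the conjugation $\delta(\gamma z) = \gamma_\delta(\delta z)$ with $\gamma_\delta = \begin{pmatrix} a & b\delta \\ c/\delta & d \end{pmatrix} \in \mathrm{SL}_2(\Z)$ (valid since $\delta \mid N \mid c$) is the right decomposition, the resulting automorphy factor $(cz+d)^k$ is correct, and your identification of where the hypotheses enter is accurate --- in the exponential phase the weighted sums are $(a+d)c\sum_\delta r_\delta/\delta$, $bdc^2\sum_\delta r_\delta/\delta$, and $bd\sum_\delta \delta r_\delta$, each killed modulo $24$ by the two congruences because $c/N \in \Z$. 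The one place your sketch could be tightened: when $d$ is odd, collapsing the symbols needs only multiplicativity in the numerator, namely $\left(\frac{c/\delta}{d}\right) = \left(\frac{c}{d}\right)\left(\frac{\delta}{d}\right)$ together with $\left(\frac{-1}{d}\right)^k = (-1)^{k(d-1)/2}$ absorbing the leftover sign, so no quadratic reciprocity is required there; reciprocity and the Kronecker-symbol conventions genuinely arise only in the even-$d$ and sign cases, which (as you correctly flag) constitute the remaining, purely routine, bookkeeping.
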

\noindent
We also require the formula (see for example, Proposition 5.9.3 of \cite{Cohen2017ModularFA}) for the order of vanishing of an eta-quotient at cusps.  We let $\frac{c}{d}\in \mathbb{P}^{1}(\mathbb{Q})$ where $d \mid N$.  We denote its equivalence class with respect to $\Gamma_0(N)$ by $\left[\frac{c}{d}\right]_{N}$ and its width with respect to $\Gamma_0(N)$ by $h_{d, N} = \frac{N}{\gcd(d^2, N)}$.   

\begin{proposition} \label{CE}
Let $N$, $c$, and $d$ be positive integers with $\gcd(c, d) = 1$,
and let $f(z)$ be an eta-quotient of level $N$ as in \eqref{etaquot}.  Then the order of vanishing of $f(z)$ at the cusp $\left[\frac{c}{d}\right]_{N}$ with respect to the local variable $q_{h_{d, N}} = q^{\frac{1}{h_{d,N}}}$ is given by
\[
\operatorname{ord}_{\left[\frac{c}{d}\right]_{N}}(f) = \frac{h_{d, N}}{24 } \sum_{\delta \mid N} \frac{(\textup{gcd}(d, \delta))^{2} \: r_{\delta}}{\delta}.
\]
\end{proposition}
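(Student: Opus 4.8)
The plan is to compute the local behavior of $f$ at the cusp $c/d$ by transporting each factor $\eta(\delta z)$ to $i\infty$ and reading off the leading exponent from the product \eqref{eta1}. The order of vanishing at $c/d$ is, by definition, the smallest exponent of $z$ occurring in the local expansion of $f(\gamma z)$ as $\mathrm{Im}(z)\to\infty$, where $\gamma = \begin{pmatrix} c & b \\ d & e \end{pmatrix}\in \mathrm{SL}_2(\Z)$ is chosen so that $\gamma(i\infty) = c/d$; fixing the standard normalization of this local expansion (the convention of \cite{Cohen2017ModularFA}) is what pins down the $\tfrac{1}{24}$ normalization in the statement.

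First I would treat a single factor. Setting $A_\delta = \begin{pmatrix} \delta & 0 \\ 0 & 1 \end{pmatrix}$ and using \eqref{SLO}, one has $\eta(\delta z) = \delta^{-1/4}\,(\eta \mid_{1/2} A_\delta)(z)$, and hence $\eta(\delta\,\gamma z) = \delta^{-1/4}(dz+e)^{1/2}\,(\eta \mid_{1/2} A_\delta\gamma)(z)$ by the weight-$\tfrac12$ cocycle relation for \eqref{SLO}. The key step is the Hermite-type factorization of the integer matrix $A_\delta\gamma = \begin{pmatrix} \delta c & \delta b \\ d & e \end{pmatrix}$: its left column $(\delta c, d)^{\mathrm{t}}$ has $\gcd(\delta c, d) = \gcd(\delta, d) =: g$ because $\gcd(c,d)=1$, so there exist $L \in \mathrm{SL}_2(\Z)$ and $h\in\Z$ with
\[
A_\delta\gamma = L\begin{pmatrix} g & h \\ 0 & \delta/g \end{pmatrix}.
\]
Applying \eqref{ETA} to $L$ and then slashing by the upper-triangular matrix, I would obtain $\eta(\delta\,\gamma z) = \kappa_\delta\,(dz+e)^{1/2}\,\eta\!\left(\tfrac{g^2 z + gh}{\delta}\right)$ for a nonzero constant $\kappa_\delta$ built from the multiplier $\epsilon$ of \eqref{ETA} and powers of $\delta$ and $g$. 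Since \eqref{eta1} shows $\eta(\tau)$ has leading exponent $\tfrac{1}{24}$ in $\tau$, the leading exponent of this factor in $z$ is $\tfrac{1}{24}\cdot\tfrac{g^2}{\delta} = \tfrac{(\gcd(d,\delta))^2}{24\delta}$.

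Finally I would assemble the full eta-quotient. Raising each factor to $r_\delta$ and multiplying over $\delta \mid N$, the weight-$\tfrac12$ automorphy factors $(dz+e)^{1/2}$ combine into the single weight-$k$ factor $(dz+e)^{k}$, which is exactly what is stripped off in passing to $f \mid_k \gamma$; what remains is a product of terms $\eta\!\left(\tfrac{g_\delta^2 z + g_\delta h_\delta}{\delta}\right)^{r_\delta}$ with $g_\delta = \gcd(d,\delta)$, whose leading exponent is the sum of the individual contributions. This yields
\[
\textup{ord}_{\frac{c}{d}}(f) = \sum_{\delta\mid N}\frac{r_\delta\,(\gcd(d,\delta))^2}{24\delta} = \frac{1}{24}\sum_{\delta\mid N}\frac{(\gcd(d,\delta))^2 r_\delta}{\delta}.
\]
I expect the main obstacle to be purely bookkeeping: verifying the factorization with $g=\gcd(\delta,d)$, tracking the multiplier $\epsilon$ and the determinant powers to confirm that $\kappa_\delta\neq 0$, and fixing the cusp-width normalization so that no stray factor survives. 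None of these affects the leading exponent, which is all that the formula records.
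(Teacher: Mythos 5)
Your proof is correct: the Hermite-type factorization you invoke does exist (with $g=\gcd(\delta c,d)=\gcd(\delta,d)$, write $d=gd'$, $\delta=g\delta'$ with $\gcd(d',\delta')=1$ and solve $d'h\equiv e \pmod{\delta'}$ for $h$), the half-integral-weight branch and multiplier ambiguities are genuinely absorbed into the nonzero constants $\kappa_\delta$ without affecting the leading exponent, and your normalization of $\textup{ord}_{\frac{c}{d}}(f)$ as the exponent in $q=e^{2\pi iz}$ rather than in the local uniformizer is exactly the convention the paper uses, as its remark on cusp widths following the proposition confirms. The paper itself offers no proof of Proposition \ref{CE}, citing Proposition 5.9.3 of \cite{Cohen2017ModularFA}; the argument there (going back to Ligozat) is precisely the one you give, so your route coincides with the paper's cited proof.
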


%%%%Section 3: Proofs%%%%%

\section{Proofs of Theorems \ref{MT1} and \ref{E5}}
\subsection{Setup for the proof of Theorem \ref{MT1}} \label{reduction}
We let $\ell$ and $p$ be distinct primes with $\ell\geq 5$, and we let $j\geq 1$ be an integer.  We define
\begin{equation} \label{FVAL}
        f_{\ell, j}(z) = \frac{\eta(\ell^{j}z)^{\ell^{j}}}{\eta(z)} \in M_{\frac{\ell^{j} - 1}{2}} \left(\Gamma_{0} (\ell^{j}) , \left(\frac{\bigcdot}{\ell} \right)^{j} \right).
\end{equation}
Recalling the generating function for $p_{[1, p]}(n)$ in \eqref{genfun}, the definition of $\delta$ in \eqref{B2}, and the definitions $\Delta = \gcd(24, p + 1)$ and $D = \frac{24}{\Delta}$, we first observe that 
\begin{align}
(f_{\ell, j}(z)f_{\ell, j}(pz))\mid U_{\ell^j} 
& = \left(q^{\delta}\prod_{n = 1}^{\infty}\frac{(1 - q^{\ell^jn})^{\ell^{j}}(1 - q^{\ell^jpn})^{\ell^{j}}}{(1 - q^n)(1 - q^{pn})}\right)\mid U_{\ell^j} \notag \\
& = \left(\prod_{n = 1}^{\infty}(1 - q^{\ell^{j} n})^{\ell^j}(1 - q^{\ell^{j} p n})^{\ell^j}\right)\mid U_{\ell^j}\bigcdot
\left(\sum_{s = 0}^{\infty}p_{[1, p]}(s)q^{s +  \delta}\right)\mid U_{\ell^j} \notag\\
& = \eta(z)^{\ell^j}\eta(pz)^{\ell^j}\sum_{r = 0}^{\infty}p_{[1,p]}(\ell^jr - \delta)q^
{r - \frac{\ell^j\left(\frac{p + 1}{\Delta}\right)}{D}},
\label{U_on_f}
\end{align}
where we used Proposition \ref{U_factor} for the second equality and \eqref{vmap} for the third. 

\subsection{Key inputs to the proof of Theorem \ref{MT1}.}

We next prove Theorems \ref{T1} and \ref{ord_vanish_G}, whose truth implies Theorem \ref{MT1}.  Theorem \ref{T1} 
explicitly constructs a form on $\Gamma_0(p)$ congruent modulo~$\ell^j$ to the form $(f_{\ell, j}(z)f_{\ell, j}(pz))\mid U_{\ell^j}$ in \eqref{U_on_f}, and Theorem \ref{ord_vanish_G} asserts that this form has orders at cusps greater than or equal to $\lceil \delta/\ell^j\rceil$. 

\begin{theorem} \label{T1}
Let $\ell$ and $p$ be distinct primes with $\ell\geq 5$, let $j\geq 1$ be an integer, and let 
\[
\lambda_{\ell, j} = \ell^{j} - 1 + \ell^{j - 1}(\ell - 1)
\]
be as in \eqref{eq:lambda_def}.
Then there exists a modular form $G_{p,\ell,j}(z) \in M_{\lambda_{\ell, j}}(\Gamma_{0}(p)) \cap \mathbb{Z}[[q]]$ such that
\begin{gather*} \label{existence of H}
(f_{\ell, j}(z) f_{\ell, j}(pz)) \mid U_{\ell^{j}} \equiv G_{p,\ell,j}(z) \Mod{\ell^{j}}. 
\end{gather*}
\end{theorem}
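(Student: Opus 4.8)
The plan is to realize the left-hand side of \eqref{existence of H} as the $U_{\ell^j}$-image of an honest holomorphic modular form, to follow what $U_{\ell^j}$ does to the level, and then to repair both the weight and the residual level modulo $\ell^j$. First I would record the modular properties of $f_{\ell,j}(z)f_{\ell,j}(pz)$. By \eqref{FVAL} we have $f_{\ell,j}\in M_{(\ell^j-1)/2}(\Gamma_0(\ell^j),(\frac{\cdot}{\ell})^j)$, and $f_{\ell,j}(pz)$ is its $V_p$-image, so the product has weight $\ell^j-1$ and level $p\ell^j$. Proposition \ref{GHN} computes the nebentypus: each factor contributes $(\frac{\cdot}{\ell})^j$, and $(\frac{\cdot}{\ell})^{2j}$ is trivial, so $f_{\ell,j}(z)f_{\ell,j}(pz)\in M_{\ell^j-1}(\Gamma_0(p\ell^j))\cap\Z[[q]]$ with trivial character. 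Holomorphy at every cusp follows from Proposition \ref{CE}: for $f_{\ell,j}$ the order at $\tfrac{c}{d}$ is $\tfrac{1}{24}(\gcd(d,\ell^j)^2-1)\geq 0$, and the analogous count applies to $f_{\ell,j}(pz)$.

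Next I would apply $U_{\ell^j}=U_\ell^{\circ j}$ and track the level. For the first $j-1$ applications the current level $p\ell^{j-i+1}$ is divisible by $\ell^2$, so Proposition \ref{level_lower}(1) lowers it one power at a time, carrying us from $\Gamma_0(p\ell^j)$ down to $\Gamma_0(p\ell)$; the final $U_\ell$ then preserves $\Gamma_0(p\ell)$. Thus $g:=f_{\ell,j}(z)f_{\ell,j}(pz)\mid U_{\ell^j}\in M_{\ell^j-1}(\Gamma_0(p\ell))\cap\Z[[q]]$ is holomorphic with trivial character, and by \eqref{U_on_f} its $q$-expansion is exactly the series we must control. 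To reach the weight $\lambda_{\ell,j}=(\ell^j-1)+\ell^{j-1}(\ell-1)$ demanded by the theorem, I would multiply by $E_{\ell-1}^{\ell^{j-1}}$, where $E_{\ell-1}$ is the normalized weight-$(\ell-1)$ Eisenstein series on $\mathrm{SL}_2(\Z)$. By von Staudt--Clausen $E_{\ell-1}\equiv 1\pmod{\ell}$, hence $E_{\ell-1}^{\ell^{j-1}}\equiv 1\pmod{\ell^j}$, so
\[
\Phi:=g\,E_{\ell-1}^{\ell^{j-1}}\in M_{\lambda_{\ell,j}}(\Gamma_0(p\ell))\cap\Z[[q]]
\]
has the same $q$-expansion as $g$ modulo $\ell^j$, with trivial character, but it is still one power of $\ell$ too high in the level.

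The remaining and decisive step is to descend from $\Gamma_0(p\ell)$ to $\Gamma_0(p)$ \emph{modulo} $\ell^j$, and I expect this to be the main obstacle. Here I would use the trace operator \eqref{trace}: since the character is trivial (hence defined modulo $p$) and $\gcd(\ell,p)=1$, Proposition \ref{level_lower}(2) gives $\operatorname{Tr}_p^{p\ell}(\Phi)\in M_{\lambda_{\ell,j}}(\Gamma_0(p))$, and by definition
\[
\operatorname{Tr}_p^{p\ell}(\Phi)=\Phi+\ell^{\,1-\lambda_{\ell,j}/2}\,\Phi\mid_{\lambda_{\ell,j}}W_\ell^{p\ell}\mid U_\ell.
\]
Setting $G_{p,\ell,j}:=\operatorname{Tr}_p^{p\ell}(\Phi)$, the theorem reduces to showing the correction term is $\equiv 0\pmod{\ell^j}$, for then $G_{p,\ell,j}\equiv\Phi\equiv g\pmod{\ell^j}$, which is \eqref{existence of H}. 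Divisibility by a single factor of $\ell$ is automatic: the prefactor $\ell^{1-\lambda_{\ell,j}/2}$ carries a large negative power of $\ell$, but $\mid_{\lambda_{\ell,j}}W_\ell^{p\ell}$ contributes $(\det W_\ell^{p\ell})^{\lambda_{\ell,j}/2}=\ell^{\lambda_{\ell,j}/2}$, and these combine to leave exactly $\ell^{1}$.

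The real work is to extract the additional factor $\ell^{j-1}$, which I expect to come from the $\ell^j$-th power structure built into $f_{\ell,j}$ (equivalently, from $g$ being a $j$-fold $U_\ell$-image of a form on $\Gamma_0(p\ell^j)$). To make this visible I would commute the $V_p$ coming from the factor $f_{\ell,j}(pz)$ past $W_\ell^{p\ell}$ using Proposition \ref{Commutativity} with $t=p$ and $\ell\nmid p$, replacing $W_\ell^{p\ell}$ by the Fricke involution $H_\ell$ acting on the level-$\ell$ part and so reducing the estimate to an $\ell$-adic valuation count for $f_{\ell,j}\mid_{\ast}H_\ell\mid U_\ell$; the transformation \eqref{eta_S} of $\eta$ under $H_\ell$ should then make the required divisibility explicit. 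Once the correction term is shown to vanish modulo $\ell^j$, integrality of $G_{p,\ell,j}$ and the stated congruence follow at once, completing the proof of Theorem \ref{T1}, and with it Theorem \ref{MT1}.
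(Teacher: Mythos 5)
Your overall architecture matches the paper's (realize the series as a $U_{\ell^j}$-image, lower the level to $p\ell$, raise the weight by a form that is $\equiv 1 \pmod{\ell^j}$, then descend to $\Gamma_0(p)$ via the trace \eqref{trace} and reduce to killing the correction term), but the proposal has a decisive flaw in \emph{where} you insert the weight-raising factor, and it makes the final step false rather than merely unfinished. Because $E_{\ell-1}^{\ell^{j-1}}$ has level $1$, writing $W_{\ell}^{p\ell} = \gamma\left(\begin{smallmatrix}\ell & 0\\ 0 & 1\end{smallmatrix}\right)$ with $\gamma\in\mathrm{SL}_2(\Z)$ shows $E_{\ell-1}^{\ell^{j-1}}\mid W_{\ell}^{p\ell} = \ell^{\ell^{j-1}(\ell-1)/2}E_{\ell-1}(\ell z)^{\ell^{j-1}}$: it contributes \emph{exactly} the slash normalization and nothing more, so it cancels identically out of your trace computation. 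Indeed, with $\tilde F = f_{\ell,j}(z)f_{\ell,j}(pz)\mid U_{\ell^{j-1}}$, $g=\tilde F\mid U_\ell$, and $k=\ell^j-1$, one computes exactly (using $W_\ell^{p\ell}\circ W_\ell^{p\ell}=1$, Proposition \ref{U_factor}, and $T\mid_k W_\ell^{p\ell}=\ell^{k/2}T(\ell z)$ for $T\in M_k(\Gamma_0(p))$) that
\begin{equation*}
\operatorname{Tr}_p^{p\ell}\bigl(g\,E_{\ell-1}^{\ell^{j-1}}\bigr)
= \ell^{k/2}\,\operatorname{Tr}_p^{p\ell}\bigl(\tilde F\mid_k W_\ell^{p\ell}\bigr)\,E_{\ell-1}^{\ell^{j-1}},
\end{equation*}
so your claim that the correction term vanishes mod $\ell^j$ is \emph{equivalent} to $f_{\ell,j}(z)f_{\ell,j}(pz)\mid U_{\ell^j}$ being congruent mod $\ell^j$ to $\ell^{k/2}$ times a form of the same weight $k$ on $\Gamma_0(p)$ -- a statement \emph{stronger} than Theorem \ref{T1} (which needs the larger weight $\lambda$), and one that nothing in your argument supports. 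The available estimate (the computation inside Lemma \ref{key_lemma}) gives only $v_\ell(\tilde F\mid W_\ell^{p\ell})\geq -\frac{\ell^j-1}{2}-2j+2$, far too weak. Relatedly, your "divisibility by a single factor of $\ell$ is automatic" step is wrong: the determinant power $\ell^{\lambda/2}$ is already built into the normalization \eqref{SLO}, and the Fourier coefficients of the $W$-image genuinely have large \emph{negative} $\ell$-adic valuation, as the paper's lemma shows.

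The paper's fix is precisely what your construction is missing: it multiplies by $\left(h_{\ell,j}(z)h_{\ell,j}(pz)\right)\mid V_{\ell^{j-1}}$ with $h_{\ell,j}=(\eta(z)^\ell/\eta(\ell z))^{\ell^{j-1}}$ \emph{inside} the $U_{\ell^{j-1}}$, in \eqref{LP}. The point, via \eqref{hH}, is that $h_{\ell,j}\mid W_\ell^{p\ell}$ carries the factor $\ell^{(\ell^j+\ell^{j-1})/4}$, which \emph{exceeds} the normalizing power $\ell^{\ell^{j-1}(\ell-1)/4}$ by $\ell^{\ell^{j-1}/2}$ per factor; this surplus (unavailable from any level-one form such as $E_{\ell-1}$) is what, combined with the explicit eta-expansion valuation count in Lemma \ref{key_lemma}, forces $\ell^{\lambda/2-1}g_{p,\ell,j}\mid_\lambda W_\ell^{p\ell}\equiv 0\pmod{\ell^j}$. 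Two further technical points: the paper takes the trace of $g_{p,\ell,j}\mid_\lambda W_\ell^{p\ell}$ rather than of the form itself, so that after multiplying by $\ell^{\lambda/2-1}$ the untouched summand is exactly $(f_{\ell,j}(z)f_{\ell,j}(pz))\mid U_{\ell^j}$; and Proposition \ref{Commutativity} applies only to forms of level exactly $\ell$, so your plan to commute $V_p$ past $W_\ell^{p\ell}$ for $f_{\ell,j}$ (level $\ell^j$) fails for $j\geq 2$ -- the paper uses it only for the level-$\ell$ forms $h_{\ell,j}$, and handles the $f_{\ell,j}$-part by the explicit matrix decomposition \eqref{CHID}--\eqref{COP} together with the eta multiplier \eqref{ETA}. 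The hard valuation estimate you defer is thus not a routine completion but the crux, and within your setup it is unattainable.
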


\medskip

The proof of Theorem \ref{T1} requires
\begin{align}
    h_{\ell,j}(z) = \left( \frac{\eta(z)^{\ell}} {\eta(\ell z)} \right)^{\ell^{j - 1}} = h_{\ell, 1}(z)^{\ell^{j - 1}}\in M_{\frac{\ell^{j - 1}(\ell - 1)} {2}} \left(\Gamma_{0}(\ell),   \left( \frac{\bigcdot}{\ell} \right) \right). \label{hdef}
\end{align}
We note that $h_{\ell, j}(z)\equiv 1\Mod{\ell^{j}}$, which follows by induction on $j$ using the binomial theorem.  With $f_{\ell, j}$ as in \eqref{FVAL}, we use \eqref{Fricke} and \eqref{eta_S} to compute
\begin{gather}
h_{\ell,j}(z) \mid_{\frac{\ell^{j - 1}(\ell - 1)} {2}} H_{\ell} = \ell^{\frac{\ell^{j} + \ell^{j - 1}}{4}} (-i)^{\frac{\ell^{j} - \ell^{j - 1}}{2}} f_{\ell, 1}(z)^{\ell^{j - 1}},
\label{hH}
\end{gather}
and we define 
\begin{gather} \label{LP}
    g_{p,\ell,j}(z) = (f_{\ell, j}(z)f_{\ell, j}(pz))\mid U_{\ell^{j - 1}}\bigcdot h_{\ell, j}(z)h_{\ell, j}(pz). 
\end{gather}
We observe that $g_{p, \ell, j}(z)$ has integer coefficients since $f_{\ell, j}(z)$ and $h_{\ell, j}(z)$ do.  Therefore, since $h_{\ell,j}(z) \equiv 1\Mod{\ell^j}$, it follows from \eqref{LP} that 
\begin{align}
g_{p, \ell, j}(z)\mid U_{\ell} \equiv (f_{\ell,j}(z)f_{\ell,j}(pz))\mid U_{\ell^j} \Mod{\ell^j}. \label{g_cong}
\end{align}
Using \eqref{vmap}, the first part of Proposition \ref{level_lower}, \eqref{FVAL}, and \eqref{hdef}, we find that $g_{p,\ell,j}(z)\in M_{\lambda}(\Gamma_{0}(\ell p))$, where $\lambda = \lambda_{\ell, j}$ as in Theorem \ref{T1}. We apply the trace operator \eqref{trace} to $g_{p,\ell,j}(z) \mid_{\lambda} W_{\ell}^{p \ell}$, we observe that $W_{\ell}^{p\ell}\circ W_{\ell}^{p\ell} = 1$, and we use the second part of Proposition \ref{level_lower} to obtain
\begin{gather*} \label{TR}
    \text{Tr}_{p}^{ \ell p} \left(g_{p,\ell,j}(z) \mid_{\lambda} W_{\ell}^{p \ell} \right) =  g_{p,\ell,j}(z) \mid_{\lambda} W_{\ell}^{p \ell} + \ell^{1 - \frac{\lambda}{2}} g_{p,\ell,j}(z)  \mid U_{\ell} \in M_{\lambda}(\Gamma_{0}(p)).
\end{gather*}
Since the map $F(z)\mapsto F(z)\mid_k W_{\ell}^{p\ell}$ preserves the field of rationality of $F(z)\in M_k(\Gamma_0(p\ell))$, we find that $g_{p, \ell, j}(z)\mid_{\lambda} W_{\ell}^{p\ell}$ has rational coefficients; hence, $\text{Tr}_{p}^{\ell p} \left(g_{p, \ell, j}(z) \mid_{\lambda} W_{\ell}^{p\ell}\right)$ does as well.
\noindent
We~set 
\begin{align} \begin{split}\label{eq:trace}
G_{p,\ell,j}(z) & = \ell^{\frac{\lambda}{2} - 1}\text{Tr}_{p}^{ \ell p} \left(g_{p,\ell,j}(z) \mid_{\lambda} W_{\ell}^{p \ell} \right) \\ 
& =  \ell^{\frac{\lambda}{2} - 1} g_{p,\ell,j}(z) \mid_{\lambda} W_{\ell}^{p \ell} + g_{p, \ell, j}(z)\mid U_{\ell}
\in M_{\lambda}(\Gamma_{0}(p)). 
\end{split}
\end{align}
Therefore, to conclude that $G_{p,\ell,j}(z) \equiv (f_{\ell,j}(z)f_{\ell,j}(pz))\mid U_{\ell^j} \Mod{\ell^j}$, we deduce from \eqref{g_cong} that it suffices to prove that   
\begin{gather} \label{GWO}
    \ell^{\frac{\lambda}{2} - 1} g_{p,\ell,j}(z) \mid_{\lambda} W_{\ell}^{p \ell}\equiv 0 \Mod{\ell^{j}},
\end{gather}
which we do in the following lemma.  The form $G_{p, \ell, j}(z)$ has rational coefficients; if \eqref{GWO} holds, then its coefficients are also $\ell$-integral.  If $G_{p,\ell,j}(z)$ does not have integer coefficients, then we use the fact that its coefficients are $\ell$-integral with bounded denominators to assert the existence of an integer $M\equiv 1 \pmod{\ell^j}$ such that $M\bigcdot G_{p, \ell, j}(z)$ has integer coefficients.  The form $M\bigcdot G_{p, \ell, j}(z)$ then satisfies the conclusion of Theorem \ref{T1}.

\begin{lemma} \label{key_lemma}
Let $\ell$ and $p$ be distinct primes with $\ell\geq 5$, and let $j\geq 1$.  Then we have
    \begin{gather*}
    \ell^{\frac{\lambda}{2} - 1}g_{p,\ell,j}(z) \mid_{\lambda} W_{\ell}^{p \ell}\equiv 0 \Mod{\ell^{j}}.
\end{gather*}
\end{lemma}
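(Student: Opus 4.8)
The plan is to compute $g_{p,\ell,j}\mid_\lambda W_\ell^{p\ell}$ explicitly enough to read off its $\ell$-adic valuation and then absorb the prefactor $\ell^{\frac{\lambda}{2}-1}$. Since $h_{\ell,j}(\ell^{j-1}z)h_{\ell,j}(\ell^{j-1}pz) = (h_{\ell,j}(z)h_{\ell,j}(pz))\mid V_{\ell^{j-1}}$, applying Proposition \ref{U_factor} to \eqref{LP} yields the exact factorization $g_{p,\ell,j} = \Phi\cdot\Psi$, where $\Phi = (f_{\ell,j}(z)f_{\ell,j}(pz))\mid U_{\ell^{j-1}}\in M_{\ell^j-1}(\Gamma_0(\ell p))$ and $\Psi = h_{\ell,j}(z)h_{\ell,j}(pz)\in M_{\ell^{j-1}(\ell-1)}(\Gamma_0(\ell p))$. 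Because the weight-$k$ slash by a fixed matrix is multiplicative over products (with the weights adding), we get $g_{p,\ell,j}\mid_\lambda W_\ell^{p\ell} = (\Phi\mid_{\ell^j-1}W_\ell^{p\ell})(\Psi\mid_{\ell^{j-1}(\ell-1)}W_\ell^{p\ell})$. Since $\Phi$ has integral $q$-expansion and one checks that $\frac{\lambda}{2}-1\geq j$ for all $\ell\geq 5$ and $j\geq 1$, it suffices to prove that this product is $\ell$-integral (i.e.\ has valuation $\geq 0$, which is far stronger than the $\geq j-\frac{\lambda}{2}+1$ we actually need). In fact the two factors will contribute opposite powers of $\ell$ that cancel.

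The factor $\Psi\mid W_\ell^{p\ell}$ is handled directly by \eqref{hH}. Writing $h_{\ell,j}(pz) = h_{\ell,j}\mid V_p$ and invoking Proposition \ref{Commutativity} with $t=p$ (legitimate since $\ell\nmid p$) gives $h_{\ell,j}(pz)\mid W_\ell^{p\ell} = h_{\ell,j}\mid H_\ell\mid V_p$; for the untwisted factor I would verify that $H_\ell^{-1}W_\ell^{p\ell} = \begin{pmatrix} p & b \\ -\ell & -a\end{pmatrix}\in\Gamma_0(\ell)$, so that $h_{\ell,j}(z)\mid W_\ell^{p\ell} = (h_{\ell,j}\mid H_\ell)\mid\gamma_0$ for some $\gamma_0\in\Gamma_0(\ell)$, which acts on $f_{\ell,1}^{\ell^{j-1}}$ only through its nebentypus. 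In either case \eqref{hH} turns the factor into an $\ell$-adic unit times a power of $\ell$ times a power of $f_{\ell,1}$, and multiplying the two factors produces $\Psi\mid W_\ell^{p\ell} = u\,\ell^{(\ell^j+\ell^{j-1})/2}\,f_{\ell,1}(z)^{\ell^{j-1}}f_{\ell,1}(pz)^{\ell^{j-1}}$ with $u$ a unit. As $f_{\ell,1}=\eta(\ell z)^\ell/\eta(z)$ has integral $q$-expansion with leading coefficient $1$, this is an $\ell$-integral form of valuation exactly $(\ell^j+\ell^{j-1})/2$.

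The remaining factor $\Phi\mid W_\ell^{p\ell}$ is the crux, and the obstacle is that $U_{\ell^{j-1}}$ and $W_\ell^{p\ell}$ do not commute, so Proposition \ref{Commutativity} does not apply directly. The plan is to decompose the coset representatives $\begin{pmatrix} 1 & i \\ 0 & \ell^{j-1}\end{pmatrix}W_\ell^{p\ell}$ that define $U_{\ell^{j-1}}\circ W_\ell^{p\ell}$ and to re-express the resulting sum as a level-$\ell^{j}$ Atkin--Lehner involution of $f_{\ell,j}(z)f_{\ell,j}(pz)$ followed by $U_{\ell^{j-1}}$, up to an explicit power of $\ell$. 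Since $f_{\ell,j}(z)f_{\ell,j}(pz)$ is an eta-quotient, its image under this involution is computed from the eta transformation law and \eqref{hH} inverted via $H_\ell^2 = -\ell I$ (which acts as $(-1)^{\mathrm{wt}}$), with Proposition \ref{CE} recording its orders at the cusps; the leading $\ell$-power works out to $-(\ell^j+\ell^{j-1})/2$. Because $U_{\ell^{j-1}}$ preserves $\ell$-integrality on $q$-expansions, this bounds the valuation of $\Phi\mid W_\ell^{p\ell}$ below by $-(\ell^j+\ell^{j-1})/2$. Combining with the previous paragraph shows that $g_{p,\ell,j}\mid W_\ell^{p\ell}$ is $\ell$-integral (for $j=1$, where $U_1=\mathrm{id}$, one sees directly that it equals a unit times $g_{p,\ell,1}$), whence $\ell^{\frac{\lambda}{2}-1}g_{p,\ell,j}\mid W_\ell^{p\ell}\equiv 0\pmod{\ell^j}$ since $\frac{\lambda}{2}-1\geq j$. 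I expect the genuine difficulty to lie entirely in the coset bookkeeping for the $U_{\ell^{j-1}}$--$W_\ell^{p\ell}$ interchange and in tracking the exact power of $\ell$ it produces; the rest is a bounded-slack valuation count built from \eqref{hH}, Proposition \ref{Commutativity}, and Proposition \ref{CE}.
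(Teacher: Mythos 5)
Your setup is sound and, in fact, coincides with the paper's own strategy: the factorization $g_{p,\ell,j}=\Phi\cdot\Psi$ via Proposition \ref{U_factor}, the multiplicativity of the slash over the product, and the evaluation of $\Psi\mid W_{\ell}^{p\ell}$ as a unit times $\ell^{(\ell^{j}+\ell^{j-1})/2}\,(f_{\ell,1}(z)f_{\ell,1}(pz))^{\ell^{j-1}}$ via \eqref{hH} and Proposition \ref{Commutativity} are exactly what the paper does, and your closing numerics ($\tfrac{\lambda}{2}-1\geq j$ together with $\ell$-integrality of the product) would finish the proof. The gap is the crux step: the valuation bound for $\Phi\mid_{\ell^{j}-1}W_{\ell}^{p\ell}$, which you propose to derive from an identity of the shape
\begin{equation*}
(f_{\ell,j}(z)f_{\ell,j}(pz))\mid U_{\ell^{j-1}}\mid_{\ell^{j}-1}W_{\ell}^{p\ell}
\;=\;\ell^{c}\,(f_{\ell,j}(z)f_{\ell,j}(pz))\mid_{\ell^{j}-1}\tilde{W}\mid U_{\ell^{j-1}},
\end{equation*}
with $\tilde{W}$ a level-$\ell^{j}$ Atkin--Lehner matrix. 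No such identity can hold for $j\geq 2$, for any constant $c$. The slash action factors through $\mathrm{PGL}_2^{+}(\mathbb{Q})$, so one may compare the two coset sums projectively. On the left, the summands are
$\begin{pmatrix}1&m\\0&\ell^{j-1}\end{pmatrix}W_{\ell}^{p\ell}=\begin{pmatrix}\ell(1+mp)&x+my\ell\\ p\ell^{j}&y\ell^{j}\end{pmatrix}$,
which are primitive integral matrices of determinant $\ell^{j}$ (since $y\ell-px=1$ forces $\ell\nmid x$, the upper-right entry is prime to $\ell$). On the right, writing $\tilde{W}=\begin{pmatrix}\ell^{j}a&b\\ p\ell^{j}c&\ell^{j}d\end{pmatrix}$ with $\ell^{j}ad-pbc=1$, each summand $\tilde{W}\begin{pmatrix}1&n\\0&\ell^{j-1}\end{pmatrix}$ has content exactly $\ell^{j-1}$, hence is projectively a primitive matrix of determinant $\ell^{2j-1}/\ell^{2(j-1)}=\ell$. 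A primitive matrix of determinant $\ell^{j}$ can never equal a rational scalar times $\gamma$ times a primitive matrix of determinant $\ell$ with $\gamma\in\Gamma_{0}(p\ell^{j})$: taking determinants forces the scalar to be $\ell^{(j-1)/2}$, which is either irrational ($j$ even) or an integer $>1$ that would violate primitivity ($j$ odd, $j\geq 3$). So the two operators have disjoint coset supports and cannot be proportional.

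The phenomenon your plan misses is that the $U_{\ell^{j-1}}$--$W_{\ell}^{p\ell}$ interchange is genuinely non-uniform across cosets: writing $1+pm=\ell^{r}t$ with $\ell\nmid t$, the matrix $\begin{pmatrix}1&m\\0&\ell^{j-1}\end{pmatrix}W_{\ell}^{p\ell}$ decomposes as an $\mathrm{SL}_2(\Z)$ matrix times $\begin{pmatrix}\ell^{r+1}&b'\\0&\ell^{j-r-1}\end{pmatrix}$ (this is \eqref{CHID}--\eqref{COP} in the paper), and after applying the eta transformation law \eqref{ETA} the $m$th summand carries the power $\ell^{-\left(\frac{\ell^{j}-1}{2}+j+r\right)}$, which depends on $r$; a single prefactor $\ell^{c}$ cannot record this. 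That per-coset decomposition and valuation count, culminating in the bound $v_{\ell}\geq -\frac{\ell^{j}-1}{2}-2j+2$, is the actual content of the lemma and is precisely the ``coset bookkeeping'' you deferred. Incidentally, your target bound $v_{\ell}(\Phi\mid W_{\ell}^{p\ell})\geq -\frac{\ell^{j}+\ell^{j-1}}{2}$ is \emph{true} (it is weaker than the paper's bound) and would suffice, and your $j=1$ observation (where $U_{\ell^{0}}$ is the identity and $g\mid W_{\ell}^{p\ell}$ is a unit times $g$) is correct; but for $j\geq 2$ the proposed route cannot be repaired by adjusting $\tilde{W}$ or $c$, and one must carry out the summand-by-summand analysis.
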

\begin{proof}
    Using \eqref{UVq}, \eqref{FVAL}, \eqref{hdef}, and \eqref{LP} we have
    \begin{align}
    \begin{split}
    \ell^{\frac{\lambda}{2} - 1} g_{p,\ell,j}(z) \mid_{\lambda} W_{\ell}^{p \ell} & =  \ell^{\frac{\lambda}{2} - 1}\left(f_{\ell,j}(z) f_{\ell,j}(pz)\right) \mid U_{\ell^{j-1}} \mid_{\ell^{j} - 1} W_{\ell}^{p\ell} \\ 
    & \bigcdot  h_{\ell,j}(z) \mid_{\frac{\ell^{j-1} (\ell - 1)}{2}} W_{\ell}^{p\ell} \bigcdot h_{\ell,j}(pz) \mid_{\frac{\ell^{j - 1}(\ell - 1)}{2}} W_{\ell}^{p\ell}. \label{prelim_comp}
\end{split}
\end{align}
\noindent    
We apply Proposition \ref{Commutativity} with $f(z) = h_{\ell,j}(z) \in M_{\frac{\ell^{j - 1}(\ell - 1)}{2}}\left(\Gamma_{0}(\ell), \left(  \frac{\bigcdot}{\ell} \right) \right)$ and $t = p$, we recall that $\lambda = \ell^j - 1 + \ell^{j - 1}(\ell - 1)$, and we use \eqref{hH} to continue the computation in \eqref{prelim_comp}:
    \begin{align}
    \begin{split}
         \ell^{\frac{\lambda}{2} - 1} & g_{p,\ell,j}(z) \mid_{\lambda} W_{\ell}^{p \ell} 
         =  \ell^{\frac{\lambda}{2} - 1}\left(f_{\ell,j}(z) f_{\ell,j}(pz)\right) \mid U_{\ell^{j-1}} \mid_{\ell^{j} - 1} W_{\ell}^{p\ell} \\
	& \bigcdot\left(\frac{p}{\ell}\right) h_{\ell, j}(z)\mid_{\frac{\ell^{j - 1}(\ell - 1)}{2}}W_{\ell^{p\ell}} 
	\bigcdot h_{\ell, j}(z)\mid_{\frac{\ell^{j - 1}(\ell - 1)}{2}} H_{\ell}\mid V_p \\
         & =  \ell^{\frac{\lambda}{2} - 1}\left(f_{\ell,j}(z) f_{\ell,j}(pz)\right) \mid U_{\ell^{j-1}} \mid_{\ell^{j} - 1} W_{\ell}^{p\ell} \\
         & \bigcdot\left( \frac{p}{\ell}\right) (-i)^{\frac{\ell^{j} - \ell^{j - 1}}{2}} \ell^{\frac{\ell^{j} + \ell^{j - 1}}{4}} f_{\ell, 1}(z)^{\ell^{j - 1}}
          \bigcdot(-i)^{\frac{\ell^{j} - \ell^{j - 1}}{2}} \ell^{\frac{\ell^{j} + \ell^{j - 1}}{4}} f_{\ell, 1}(pz)^{\ell^{j - 1}} \label{SWO}\\
          & = \left(\frac{-p}{\ell} \right)\ell^{\frac{3(\ell^{j} - 1)}{2}}\left(f_{\ell,j}(z) f_{\ell,j}(pz)\right) \mid U_{\ell^{j-1}} \mid_{\ell^{j} - 1} W_{\ell}^{p\ell} \bigcdot (f_{\ell, 1}(z) f_{\ell,1}(pz))^{\ell^{j - 1}}. 
          \end{split}
    \end{align}
We now use \eqref{SWO} to study the Fourier expansion of $\ell^{\frac{\lambda}{2} - 1}g_{p,\ell,j}(z) \mid_{\lambda} W_{\ell}^{p \ell}$ in the appropriate power series ring modulo $\ell^j$.  We define $\zeta_{D \ell^{j}} = e^{\frac{2 \pi i}{D \ell^{j}}}$ and $q_{\ell^{j}} = q^{\frac{1}{\ell^{j}}}$.  It follows from \eqref{SWO} that in $\mathbb{Z} [\zeta_{D \ell^{j}}][[q_{\ell^{j}}]]$, we have $\ell^{\frac{\lambda}{2} - 1} \left( g_{p,\ell,j}(z) \mid_{\lambda} W_{\ell}^{p \ell} \right)  \equiv 0 \Mod{\ell^{j}}$ if and only if 
\[
\ell^{\frac{3(\ell^{j} - 1)}{2}} (f_{\ell,j}(z) f_{\ell,j}(pz)) \mid U_{\ell^{j-1}} \mid_{\ell^{j} - 1} W_{\ell}^{p\ell} \equiv 0 \Mod{\ell^{j}},
\]
which holds if and only if 
\begin{equation} \label{val_cond}
    v_{\ell} \left((f_{\ell,j}(z) f_{\ell,j}(pz)) \mid U_{\ell^{j-1}} \mid_{\ell^{j} - 1} W_{\ell}^{p\ell} \right) \geq j - \frac{3(\ell^{j} - 1)}{2},
\end{equation} 
where $v_{\ell}(F)$ is the largest power of $\ell$ dividing every coefficient of $F\in \mathbb{Z} [\zeta_{D \ell^{j}}][[q_{\ell^{j}}]]$.

We let $x$, $y\in \Z$ with $y\ell^2 - p\ell x = \ell$.  From \eqref{UOP}, \eqref{ATI}, and \eqref{FVAL}, we observe that
    \begin{align} \begin{split}\label{UWD}
        (f_{\ell,j}(z) f_{\ell,j}(pz)) & \mid U_{\ell^{j-1}} \mid_{\ell^{j} - 1} W_{\ell}^{p\ell} \\
        & = \ell^{(j - 1) \left( \frac{\ell^{j} - 3}{2}  \right)} \mathlarger{\mathlarger{\sum}}_{m = 0}^{\ell^{j-1} - 1} \left(f_{\ell, j}(z)f_{\ell, j}(pz)   \right) \Big|_{\ell^{j} - 1} \begin{pmatrix}
            1 & m \\ 0 & \ell^{j - 1}
        \end{pmatrix} \begin{pmatrix}
            \ell & x \\ p \ell & y \ell
        \end{pmatrix}.
    \end{split}
    \end{align}
For $0\leq m \leq \ell^{j - 1} - 1$, we define $0\leq r\leq j - 2$ and $t\in\Z$ with $\ell\nmid t$ such that $1 + pm = \ell^{r} t$, and we define $0\leq b'\leq \ell^{j - r - 1} - 1$ such that $b^{'} \equiv t^{-1} (x + y m \ell) \Mod{\ell^{j - r - 1}}$.  We also require
\begin{gather*}
 \delta^{'} = y \ell^{r + 1} - p b^{'}, ~\beta^{'} = \frac{x + y m \ell - t b^{'}}{\ell^{j - r - 1}}.   
\end{gather*}
It follows that $\begin{pmatrix} t & \beta' \\ p\ell^{j - r - 1} & \delta'\end{pmatrix} \in \text{SL}_2(\Z)$ and that
\begin{gather} \label{CHID}
    \begin{pmatrix}
        1 & m \\ 0 & \ell^{j-1} \end{pmatrix} \begin{pmatrix}
            1 & x \\ p & y \ell
        \end{pmatrix} = \begin{pmatrix}
            t & \beta^{'} \\ p \ell^{j - r - 1} & \delta^{'} 
        \end{pmatrix} \begin{pmatrix}
            \ell^{r} & b^{'} \\ 0 & \ell^{j - r - 1}. 
        \end{pmatrix}
        \end{gather}
For all $0 \leq m \leq \ell^{j-1} - 1$, we compute 
    \begin{align}
    \begin{split}
    \begin{pmatrix} 
        1 & m \\ 0 & \ell^{j-1} 
    \end{pmatrix} \begin{pmatrix}
        \ell & x \\ p \ell & y \ell
    \end{pmatrix} & = \begin{pmatrix}
        1 & m \\ 0 & \ell^{j-1} \end{pmatrix}\begin{pmatrix}
            1 & x \\ p & y \ell
        \end{pmatrix} \begin{pmatrix}
            \ell & 0 \\ 0 & 1
        \end{pmatrix} \\
        & =\begin{pmatrix}
            t & \beta^{'} \\ p \ell^{j - r - 1} & \delta^{'} 
        \end{pmatrix} \begin{pmatrix}
            \ell^{r} & b^{'} \\ 0 & \ell^{j - r - 1} 
        \end{pmatrix} \begin{pmatrix}
            \ell & 0 \\ 0 & 1
        \end{pmatrix} \\
  & = \begin{pmatrix}
            t & \beta^{'} \\ p \ell^{j - r - 1} & \delta^{'} 
        \end{pmatrix} \begin{pmatrix}
            \ell^{r + 1} & b^{'} \\ 0 & \ell^{j - r - 1}
        \end{pmatrix} \label{COP},
        \end{split}
        \end{align} 
where we used \eqref{CHID} for the second equality.  Using \eqref{SLO}, \eqref{ETA}, and \eqref{COP}, we deduce that
        \begin{align} 
        \eta(z) \Big|_{1/2} \begin{pmatrix}
            1 & m \\ 0 & \ell^{j - 1}
        \end{pmatrix} \begin{pmatrix}
            \ell & x \\ p \ell & y \ell
        \end{pmatrix} & = \epsilon_{t, \beta^{'}, p \ell^{j - r - 1},  \delta^{'}} \, \bigcdot \, \ell^{\frac{r + 1}{2} - \frac{j}{4}}\, \bigcdot \, \eta\left( \frac{\ell^{r + 1}z + b^{'}}{\ell^{j - r - 1}} \right), \label{TC1} \\
         \eta(p z) \Big|_{1/2} \begin{pmatrix}
            1 & m \\ 0 & \ell^{j - 1}
        \end{pmatrix} \begin{pmatrix}
            \ell & x \\ p \ell & y \ell
        \end{pmatrix} & = \epsilon_{t, p \beta^{'}, \ell^{j - r - 1}, \delta^{'}}\, \bigcdot \, \ell^{\frac{r + 1}{2} - \frac{j}{4}}\, \bigcdot \, \eta \left( \frac{\ell^{r + 1} p z + p b^{'}}{\ell^{j - r - 1}} \right), \label{TC2} \\
        \eta(\ell^{j} z) \Big|_{1/2} \begin{pmatrix}
            1 & m \\ 0 & \ell^{j - 1}
        \end{pmatrix} \begin{pmatrix}
            \ell & x \\ p \ell & y \ell
        \end{pmatrix} & = \epsilon_{\ell (1 + pm) , x + y m \ell, p , y}\, \bigcdot \, \ell^{- \frac{j}{4}}\, \bigcdot \, \eta(z), \label{TC3} \\
        \eta(p \ell^{j} z) \Big|_{1/2} \begin{pmatrix}
            1 & m \\ 0 & \ell^{j - 1}
        \end{pmatrix} \begin{pmatrix}
            \ell & x \\ p \ell & y \ell
        \end{pmatrix} & = \epsilon_{\ell (1 + pm) , p (x + y m \ell), 1 , y}\, \bigcdot \, \ell^{- \frac{j}{4}}\, \bigcdot \, \eta(p z). \label{TC4}
        \end{align}
On substituting \eqref{TC1}, \eqref{TC2}, \eqref{TC3}, and \eqref{TC4} in \eqref{UWD}, we find, for $0\leq m\leq \ell^{j-1} - 1$, that the $m$th summand in the expression \eqref{UWD} for $(f_{\ell,j}(z) f_{\ell,j}(pz)) \mid U_{\ell^{j - 1}} \mid_{\ell^{j} - 1} W_{\ell}^{p\ell}$ is
\begin{align} \label{SVI}
 \ell^{(j - 1) \left( \frac{\ell^{j} - 3}{2}  \right)} \frac{\left( \ell^{-\frac{j}{4}} ~ \epsilon_{\ell (1 + pm),x + y m \ell,p,y} ~ \eta(z) \right)^{\ell^{j}} \left( \ell^{- \frac{j}{4}} ~ \epsilon_{\ell (1 + pm),p (x + y m \ell),1,y} ~ \eta(p z) \right)^{\ell^{j}}}{\left( \ell^{\frac{r + 1}{2} - \frac{j}{4}} ~ \epsilon_{t, \beta^{'},p \ell^{j - r - 1},\delta^{'}} ~ \eta\left( \frac{\ell^{r + 1}z + b^{'}}{\ell^{j - r - 1}}\right) \right) \left(\ell^{\frac{r + 1}{2} - \frac{j}{4}} ~ \epsilon_{t,p \beta^{'}, \ell^{j - r - 1},\delta^{'}} ~\eta \left( \frac{\ell^{r + 1} p z + p b^{'}}{\ell^{j - r - 1}} \right) \right)}. 
\end{align}
With
\begin{align*}
    \lambda_{\ell, p, m , j} = \frac{\epsilon_{\ell (1 + pm),x + y m \ell,p,y} \, \bigcdot \, \epsilon_{\ell (1 + pm),p (x + y m \ell),1,y}}{\epsilon_{t, \beta^{'},p \ell^{j - r - 1},\delta^{'}}\,\bigcdot\, \epsilon_{t,p \beta^{'}, \ell^{j - r - 1},\delta^{'}}} 
\end{align*}
in the group of $24$th roots of unity, \eqref{SVI} simplifies to 
\begin{align*} 
\ell^{- \left(\frac{\ell^{j} - 1}{2} + j + r\right)} ~~ \lambda_{\ell,p,m,j}~ ~\frac{\eta(z)^{\ell^{j}} \eta(pz)^{\ell^{j}}}{\eta\left( \frac{\ell^{2(r + 1)} z ~ + ~ \ell^{r + 1} b^{'}}{\ell^{j}}  \right)   \eta\left( \frac{\ell^{2(r + 1)} p z ~+~ p \ell^{r + 1} b^{'}}{\ell^{j}}  \right)}.
\end{align*}
From \eqref{UWD} and the definitions $\Delta = \gcd(24, p+1)$ and $D = \frac{24}{\Delta}$, it follows that $(f_{\ell, j}(z)f_{\ell, j}(pz))\mid U_{\ell^{j - 1}}\mid _{\ell^{j} - 1} W^{p\ell}_{\ell}$ has expansion in the power series ring $\mathbb{Z} [\zeta_{D \ell^{j}}][[q_{\ell^{j}}]]$ given by
\begin{align*}
\sum_{m = 0}^{\ell^{j - 1} - 1}\ell^{- \left(\frac{\ell^{j} - 1}{2} + j + r\right)} & \lambda_{\ell,p,m,j} ~ q_{\ell^{j}}^{\frac{\left(\ell^{2j} - \ell^{2(r + 1)}\right)\left(\frac{p + 1}{\Delta}\right)}{D}} ~ \zeta_{D \ell^{j}}^{-\ell^{r + 1}b'} \\ 
& \bigcdot\prod_{n = 1}^{\infty} \frac{(1 - q^{n})^{\ell^{j}} (1 - q^{pn})^{\ell^{j}}}{\left( 1 - \zeta_{D \ell^{j}}^{n \ell^{r + 1} b^{'}D} \bigcdot q_{\ell^{j}}^{n \ell^{2(r + 1)}} \right) \left( 1 - \zeta_{D \ell^{j}}^{n p \ell^{r + 1} b^{'}D} \bigcdot q_{\ell^{j}}^{n \ell^{2(r + 1)} p} \right)}.
\end{align*}
Therefore, we have
\begin{align*}
   v_{\ell} \left( f_{\ell,j}(z) f_{\ell,j}(pz) \mid U_{\ell^{j - 1}} \mid_{\ell^{j} - 1} W_{\ell}^{p\ell} \right) &\geq \text{min} \Bigl \{ - \left( \frac{\ell^{j} - 1}{2} + j + r \right) :  ~0 \leq r \leq j - 2 \Bigr \} \\
   & = - \left( \frac{\ell^{j} - 1}{2} \right) - 2j + 2. 
\end{align*}    
\noindent 
Since $~\ell \geq 5 ~\text{and}~ j \geq 1$, we have $- \left( \frac{\ell^{j} - 1}{2} \right) - 2j + 2 \geq j - 3 \left( \frac{\ell^{j} - 1}{2} \right)$, which proves Lemma \ref{key_lemma}, and with it, Theorem \ref{T1}.
\end{proof}

We turn to Theorem \ref{ord_vanish_G} on the orders of $G_{p, \ell, j}(z)$ as in \eqref{eq:trace} at cusps $\left[\frac{1}{p}\right]_{p} = [\infty]_{p}$ and $[1]_p = [0]_p$
of $\Gamma_0(p)$.  
\begin{theorem}\label{ord_vanish_G}
Let $p \neq \ell$ be primes with $\ell \geq 5$, and let $j \geq 1$. Let $G_{p,\ell,j}(z)$ be as in \eqref{eq:trace}, and let~$\delta$ be as in \eqref{B2}. Then we have
\begin{enumerate}
    \item $\operatorname{ord}_{\left[\frac{1}{p}\right]_{p}}(G_{p,\ell,j}(z)) \geq \lceil\delta/\ell^j \rceil$ and
    \item $\operatorname{ord}_{[1]_{p}}(G_{p,\ell,j}(z)) \geq \lceil\delta/\ell^j\rceil$.
\end{enumerate}
\end{theorem}

We first prove part (1) of Theorem \ref{ord_vanish_G}. With $f_{\ell, j}(z)$ as in \eqref{FVAL}, $h_{\ell, j}(z)$ as in \eqref{hdef}, and $g_{p, \ell, j}(z)$ as in \eqref{LP}, we define
\begin{equation}
    A(z) =  \left(f_{\ell,j}(z) f_{\ell,j}(pz)\right) \mid U_{\ell^{j-1}} \mid_{\ell^{j} - 1} W_{\ell}^{p\ell} \bigcdot  (f_{\ell, 1}(z) f_{\ell,1}(pz))^{\ell^{j - 1}}, \label{Adef}
\end{equation}
\begin{equation}
    B(z) = g_{p,\ell, j}(z)\mid U_{\ell} = [(f_{\ell, j}(z) f_{\ell,j}(pz)) \mid U_{\ell^{j-1}} \bigcdot   h_{\ell,j}(z) h_{\ell, j}(pz) ] \mid U_{\ell}, \label{Bdef}
\end{equation}
\begin{equation}
    C(z) = \left(f_{\ell,j}(z) f_{\ell,j}(pz)\right) \mid U_{\ell^{j-1}} |_{\ell^{j} - 1} W_{\ell}^{p \ell}. \label{Cdef}
\end{equation}
It follows from \eqref{umap}, the comments following \eqref{ATI}, \eqref{FVAL}, and \eqref{hdef} that $A(z)$, $B(z)$, and $C(z)$ are holomorphic modular forms on $\Gamma_0(p\ell)$.  
From \eqref{LP}, \eqref{eq:trace}, and \eqref{SWO}, we obtain
\begin{align}\label{eq:ord_AB}
    G_{p,\ell,j}(z) = \ell^{\frac{\lambda}{2} - 1}  ~ \text{Tr}_{p}^{ \ell p} \left(g_{p,\ell,j}(z) \mid_{\lambda} W_{\ell}^{p \ell} \right) 
    = \ell^{\frac{3(\ell^{j} - 1)} {2}} \left(\frac{-p}{\ell} \right) A(z) + B(z).
\end{align}
Using \eqref{FVAL}, \eqref{Adef}, \eqref{Cdef}, the holomorphy of $C(z)$, and Proposition \ref{CE}, we find that 
\begin{align}\label{eq:ord_A}
    \operatorname{ord}_{\left[\frac{1}{p \ell}\right]_{p \ell}}\left( A(z) \right) 
    = \text{ord}_{\left[\frac{1}{p \ell}\right]_{p \ell}}(C(z)) 
    + \text{ord}_{\left[\frac{1}{p\ell}\right]_{p\ell}}(f_{\ell, 1}(z) f_{\ell,1}(pz))^{\ell^{j - 1}}) 
    \geq \ell^{j - 1}\bigcdot\frac{(\ell^2 - 1)(p + 1)}{24},
    \end{align}
    where $\left[\frac{1}{p\ell}\right]_{p\ell} = [\infty]_{p\ell}$. 
Using \eqref{hdef}, Proposition \ref{CE} yields $\operatorname{ord}_{\left[\frac{1}{p \ell}\right]_{p \ell}}(h_{\ell,j}(z) h_{\ell,j}(pz)) = 0$.  Therefore, \eqref{UVq}, \eqref{FVAL}, \eqref{Bdef}, and Proposition \ref{CE} imply that 
\begin{equation}\label{eq:ord_B}
    \operatorname{ord}_{\left[\frac{1}{p \ell}\right]_{p \ell}}(B(z)) 
    = \operatorname{ord}_{\left[\frac{1}{p \ell}\right]_{p \ell}} ((f_{\ell, j}(z) f_{\ell,j}(pz))\mid U_{\ell^{j}}) 
    \geq \left \lceil \frac{\delta}{\ell^j} \right\rceil. 
\end{equation} 
To conclude the first part of the theorem, we recall that $\delta = \frac{(p + 1)(\ell^{2j} - 1)}{24}$ as in \eqref{B2}, we note that 
$\left[\frac{1}{p\ell}\right]_p = \left[\frac{1}{p}\right]_p$, and we use \eqref{eq:ord_AB}, \eqref{eq:ord_A}, and \eqref{eq:ord_B} to see that 
\begin{align*}
    \text{ord}_{\left[\frac{1}{p}\right]_p}(G_{p, \ell, j}(z)) & = \text{ord}_{\left[\frac{1}{p\ell}\right]_p}(G_{p, \ell, j}(z))
     = \text{ord}_{\left[\frac{1}{p\ell}\right]_{p\ell}}(G_{p, \ell, j}(z)) \\
    &\geq \min \left\{ \operatorname{ord}_{\left[\frac{1}{p \ell}\right]_{p \ell}}(A(z)), \: \operatorname{ord}_{\left[\frac{1}{p \ell}\right]_{p \ell}}(B(z))   \right\} \\
    & \geq\text{min}\left\{\ell^{j - 1}\bigcdot\frac{(\ell^2 - 1)(p + 1)}{24}, \, \left\lceil\frac{\delta}{\ell^j}\right\rceil\right\}= \left \lceil \frac{\delta}{\ell^j} \right\rceil,
\end{align*}
where the second equality holds since the width of the cusp is the same in levels $p\ell$ and $p$.  

\medskip

Before proceeding to the proof of part (2) of Theorem \ref{ord_vanish_G}, we record further useful facts.
\begin{proposition}\label{slash_Wp}
Let $k, j \in \Z$ with $j\geq 1$ and let $\ell\neq p$ be primes.
\begin{enumerate}
\item Suppose that $F(z)\in M_k(\Gamma_0(p\ell))$.  We have $\operatorname{ord}_{\left[\frac{1}{\ell}\right]_{p\ell}}(F(z)) = \operatorname{ord}_{\left[\frac{1}{p \ell}\right]_{p \ell}}(F(z) \mid_{k} W_{p}^{\ell p})$.
\item Suppose that $G(z)\in M_k(\Gamma_0(\ell^jp))$ and that $t\geq j$.  We have 
\begin{enumerate}
    \item $G(z) \mid_{k} W_{p}^{\ell^{t}p} = G(z) \mid_{k} W_{p}^{\ell^{j}p}$ and
    \item $G(z) \mid U_{\ell^{j - 1}}\mid_k W_{p}^{\ell p} = G(z)\mid_k W_p^{\ell^jp}\mid U_{\ell^{j - 1}}$.
\end{enumerate} 
\item Suppose that $\psi$ is a character modulo $\ell^j$ and that $H(z)\in M_k(\Gamma_0(\ell^j),\psi)$.  Then we have
\begin{equation*}
(H(z)H(pz))|_{2k}W_p^{\ell^jp} = \overline{\psi}(p)H(z)H(pz).
\end{equation*}
\end{enumerate}
\end{proposition}
\begin{proof}
    For part (1), we let $x$, $y \in \Z$ with $py - \ell x = 1$, and we let 
    \begin{equation}
        W_{p}^{\ell p} = \begin{pmatrix}p & x \\ p\ell & py\end{pmatrix} = \begin{pmatrix} 1 & x \\ \ell & py\end{pmatrix}\begin{pmatrix}p & 0 \\ 0 & 1\end{pmatrix}. \label{W_fact}
    \end{equation}
    We note that the cusps $\left[\frac{1}{\ell}\right]_{p\ell}$ and $\left[\frac{1}{p\ell}\right]_{p\ell}$ have widths $p$ and $1$, respectively, and we observe that $F(z)$ has expansion at $\left[\frac{1}{\ell}\right]_{p\ell}$ in the local variable $q_p$ given by $F(z)\mid_k \begin{pmatrix} 1 & x \\ \ell & py\end{pmatrix} = \sum c(n)q_p^n$.  Using \eqref{W_fact}, we~compute
    \begin{align*}
        F(z)\mid_k W_{p}^{\ell p} = F(z)\mid_k \begin{pmatrix} 1 & x \\ \ell & py\end{pmatrix}\begin{pmatrix} p & 0 \\ 0 & 1\end{pmatrix}
        = \sum c(n)q_p^n\mid_k\begin{pmatrix} p & 0 \\ 0 & 1\end{pmatrix} = p^{k/2}\sum c(n)q^n,
    \end{align*}
    which proves part (1) of the proposition.

    \medskip

    For part (2), we let $x$, $y$, $x'$, $y'\in \Z$ with $py - \ell^t x = py' - \ell^jx' = 1$, and we let 
    \begin{equation}
        W_{p}^{\ell^jp} = \begin{pmatrix} p & x \\ \ell^j p & py \end{pmatrix}, \
        W_{p}^{\ell^tp} = \begin{pmatrix} p & x' \\ \ell^t p & py'\end{pmatrix}. \label{W_power}
    \end{equation}
    We compute $M = W_p^{\ell^tp}\bigcdot \left(W_{p}^{\ell^{j}p}\right)^{-1} = \begin{pmatrix} py - \ell^jx' & -x + x' \\ p\ell^j(p^{t - j}y - y') & -\ell^tx + py'\end{pmatrix} \in \Gamma_0(\ell^j p)$.  It follows that $G(z)\mid_k W_p^{\ell^tp} = G(z)\mid_k MW_p^{\ell^jp} = G(z)\mid_k W_p^{\ell^jp}$, which is (2a).  For (2b), we note that $G(z)\mid_k W_p^{\ell^jp}\mid U_{\ell^{j - 1}} = G(z)\mid U_{\ell^{j -1}}\mid_k W_p^{\ell^jp} = G(z)\mid U_{\ell^{j - 1}}\mid_k W_p^{\ell p}$, where the first equality results from $j - 1$ applications of Proposition \ref{atkin-li} and the second results from part (2a).    

    \medskip

    We next observe that $W_{p}^{\ell^j p}$ as in \eqref{W_power} has  
    \begin{equation*}
        W_{p}^{\ell^jp} = \begin{pmatrix} p & x \\ \ell^j p & py\end{pmatrix} = \begin{pmatrix} 1 & x \\ \ell^j & py\end{pmatrix} \begin{pmatrix} p & 0 \\ 0 & 1\end{pmatrix},
    \end{equation*}
    with $\begin{pmatrix} 1 & x \\ \ell^j & py\end{pmatrix} \in \Gamma_0(\ell^j)$ and that $py\equiv 1\pmod{\ell^j}$.  We then have 
    \begin{equation}
        H(z)\mid_k W_p^{\ell^j p} = H(z)\mid_k \begin{pmatrix} 1 & x \\ \ell^j & py\end{pmatrix}
        \begin{pmatrix} p & 0 \\ 0 & 1\end{pmatrix} = \psi(py) H(z)\mid_k \begin{pmatrix} p & 0 \\ 0 & 1\end{pmatrix} = p^{k/2}H(pz).\label{H}
    \end{equation}
    We also observe that $\begin{pmatrix} p & 0 \\ 0 & 1 \end{pmatrix}W_{p}^{\ell^j p} = \begin{pmatrix} p & 0 \\ 0 & p\end{pmatrix}\begin{pmatrix} p & x \\ \ell^j & y\end{pmatrix}$ with $\begin{pmatrix} p & x \\ \ell^j & y\end{pmatrix} \in \Gamma_0(\ell^j)$ and that $\psi(y) = \overline{\psi}(p)$.  These facts imply that
    \begin{align}
        \begin{split}
        H(pz)\mid_k W_p^{\ell^j p} & = p^{-k/2}H(z)\mid_k\begin{pmatrix} p & 0 \\ 0 & 1\end{pmatrix}W_p^{\ell^j p} \\ & = p^{-k/2}H(z)\mid_k \begin{pmatrix} p & x \\ \ell^j & y\end{pmatrix} = p^{-k/2}\overline{\psi}(p)H(z). \label{Hp}
        \end{split}
    \end{align}
    We now use \eqref{H} and \eqref{Hp} to conclude that 
    \begin{equation*}
        (H(z)H(pz))\mid_{2k} W_{p}^{\ell^jp} = H(z)\mid_k W_p^{\ell^j p}\bigcdot H(pz)\mid_k W_p^{\ell^j p}
        = \overline{\psi}(p)H(z)H(pz),
    \end{equation*}
    which is part (3) of the proposition.
\end{proof}

We recall that $A(z)$ as in \eqref{Adef} and $B(z)$ as in \eqref{Bdef} lie in $M_{\lambda}(\Gamma_0(p\ell))$ with $\lambda$ as in \eqref{eq:lambda_def}.  To obtain lower bounds on $\text{ord}_{\left[\frac{1}{\ell}\right]_{p\ell}}(A(z))$ and $\text{ord}_{\left[\frac{1}{\ell}\right]_{p\ell}}(B(z))$, we consider $A(z)\mid_{\lambda} W_{p}^{\ell p}$ and $B(z)\mid_{\lambda} W_p^{\ell p}$. 
\noindent
Using \eqref{Adef} we find that 
\begin{align}
A(z)\mid_{\lambda} W_{p}^{\ell p} & = (f_{\ell, j}(z)f_{\ell,j}(pz))\mid U_{\ell^{j - 1}}\mid_{\ell^{j} - 1} H_{\ell p}\bigcdot (f_{\ell, 1}(z)f_{\ell, 1}(pz))^{\ell^{j - 1}}\mid_{\ell^{j - 1}(\ell - 1)}W_p^{\ell p} \notag \\ 
& = \left(\frac{p}{\ell}\right)(f_{\ell, j}(z)f_{\ell,j}(pz))\mid U_{\ell^{j - 1}}\mid_{\ell^{j} - 1} H_{\ell p}\bigcdot(f_{\ell, 1}(z)f_{\ell, 1}(pz))^{\ell^{j - 1}}, \label{A_simple}
\end{align}
where the first equality follows since $W_{\ell}^{\ell p}\circ W_{p}^{\ell p}$ and $H_{\ell p}$ as in \eqref{Fricke} are $\Gamma_0(p\ell)$-equivalent, and the second equality follows from the third part of Proposition \ref{slash_Wp}.  We note that $(f_{\ell, j}(z)f_{\ell, j}(pz))\mid U_{\ell^{j - 1}}\mid H_{\ell p}$ has non-negative order at $\left[\frac{1}{\ell}\right]_{p\ell}$ since it is a holomorphic modular form.  Therefore, using \eqref{Adef}, \eqref{A_simple}, Proposition \ref{CE}, and the first part of Proposition \ref{slash_Wp}, we compute
\begin{align}
\text{ord}_{\left[\frac{1}{\ell}\right]_{p\ell}}(A(z)) & = \text{ord}_{\left[\frac{1}{p\ell}\right]_{p\ell}}(A(z)|_{\lambda}W_p^{\ell p}) \notag \\ 
& = \text{ord}_{\left[\frac{1}{p\ell}\right]_{p\ell}}((f_{\ell,j}(z)f_{\ell,j}(pz))\mid U_{\ell^{j - 1}}\mid_{\ell^j - 1} H_{\ell p}) + \text{ord}_{\left[\frac{1}{p\ell}\right]_{p\ell}}((f_{\ell, 1}(z)f_{\ell, 1}(pz))^{\ell^{j - 1}}) \notag \\
& \geq \text{ord}_{\left[\frac{1}{p\ell}\right]_{p\ell}}((f_{\ell, 1}(z)f_{\ell, 1}(pz))^{\ell^{j - 1}})
= \ell^{j - 1}\bigcdot\frac{(\ell^{2} - 1)(p + 1)}{24}. \label{ord_A}
\end{align}

It remains to study $\operatorname{ord}_{\left[\frac{1}{\ell}\right]_{p \ell}}(B(z))$. Since $\ell\neq p$ and $g_{p, \ell, j}(z)$ has level~$p\ell$, \eqref{Bdef} and the proof of Proposition 7 of \cite{martin} imply that 
\begin{equation}
\operatorname{ord}_{\left[\frac{1}{\ell}\right]_{p \ell}}(B(z)) = \text{ord}_{\left[\frac{1}{\ell}\right]_{p\ell}}(g_{p, \ell, j}(z)\mid U_{\ell}) \geq 
\left\lceil\frac{1}{\ell}\bigcdot\text{ord}_{\left[\frac{1}{\ell}\right]_{p\ell}}(g_{p, \ell, j}(z))\right\rceil. \label{B_ord}
\end{equation}
Using \eqref{LP}, we obtain
\begin{align}\begin{split}\label{g_simple}
g_{p, \ell, j}(z)\mid_{\lambda}W_{p}^{\ell p}
& = (f_{\ell, j}(z)f_{\ell, j}(pz))\mid U_{\ell^{j - 1}}\mid_{\ell^{j} - 1}W_p^{\ell p}\bigcdot(h_{\ell, j}(z)h_{\ell, j}(pz))\mid_{\ell^{j - 1}(\ell - 1)}W_p^{\ell p}\\
& = (f_{\ell, j}(z)f_{\ell, j}(pz))\mid_{\ell^j - 1}W_p^{\ell^jp}\mid U_{\ell^{j - 1}}
\bigcdot (h_{\ell, j}(z)h_{\ell, j}(pz))\mid_{\ell^{j - 1}(\ell - 1)}W_p^{\ell p} \\
& = \left(\frac{p}{\ell}\right)^{j + 1}(f_{\ell, j}(z)f_{\ell,j}(pz))\mid U_{\ell^{j - 1}}\bigcdot h_{\ell, j}(z)h_{\ell, j}(pz) \\
& = \left(\frac{p}{\ell}\right)^{j + 1}g_{p, \ell, j}(z),
\end{split}
\end{align}
where the second equality requires Proposition \ref{atkin-li} and the second part of Proposition \ref{slash_Wp}, and the third equality uses two applications of the third part of Proposition \ref{slash_Wp}.  We use \eqref{UVq}, \eqref{g_simple}, and the first part of Proposition \ref{slash_Wp} to compute
\begin{align}
    \text{ord}_{\left[\frac{1}{\ell}\right]_{p\ell}}(g_{p, \ell, j}(z))
    & =\text{ord}_{\left[\frac{1}{p\ell}\right]_{p\ell}}(g_{p, \ell, j}(z) \mid_{\lambda} W_p^{\ell p}) \notag \\
    & = \text{ord}_{\left[\frac{1}{p\ell}\right]_{p\ell}}((f_{\ell,j}(z)f_{\ell,j}(pz))\mid U_{\ell})
    + \text{ord}_{\left[\frac{1}{p\ell}\right]_{p\ell}}(h_{\ell, j}(z)h_{\ell, j}(pz)) \notag \\
    &\geq \left\lceil\frac{1}{\ell^{j - 1}}\bigcdot\frac{(\ell^{2j} - 1)(p + 1)}{24}\right\rceil = \left\lceil\frac{\delta}{\ell^{j - 1}}\right\rceil. \label{g_ord}
    \end{align}
Combining \eqref{B_ord} and \eqref{g_ord} yields
\begin{equation}
   \operatorname{ord}_{\left[\frac{1}{\ell}\right]_{p \ell}}(B(z)) 
   \geq \left\lceil\frac{1}{\ell}\left\lceil\frac{\delta}{\ell^{j - 1}}\right\rceil\right\rceil
   = \left\lceil\frac{\delta}{\ell^j}\right\rceil.  \label{ord_B_ell}
\end{equation}
To conclude the second part of the theorem, we observe that $[1]_p = \left[\frac{1}{\ell}\right]_p$ and we use \eqref{eq:ord_AB}, \eqref{ord_A}, and \eqref{ord_B_ell} to obtain
\begin{align*}
    \text{ord}_{[1]_p}(G_{p, \ell, j}(z)) 
    & = \text{ord}_{\left[\frac{1}{\ell}\right]_{p}}(G_{p, \ell, j}(z))
    = \text{ord}_{\left[\frac{1}{\ell}\right]_{p\ell}}(G_{p, \ell, j}(z)) \\
     & \geq \text{min}
     \left\{
     \text{ord}_{\left[\frac{1}{\ell}\right]_{p\ell}}(A(z)), \, 
    \text{ord}_{\left[\frac{1}{\ell}\right]_{p\ell}}(B(z))
    \right\} \\
    & \geq\text{min}\left\{\ell^{j - 1}\bigcdot\frac{(\ell^2 - 1)(p + 1)}{24}, \left\lceil\frac{\delta}{\ell^j}\right\rceil\right\}= \left \lceil \frac{\delta}{\ell^j} \right\rceil,
\end{align*}
where the second equality holds since the width of the cusp $1/\ell$ is the same in levels $p\ell$ and $\ell$.

%%%%%%%%%%%%%%%%%%%%%%%%%%%%%%%%%%%%%%%%%%%%%%%%%%%%%
\subsection{Conclusion of the proof of Theorem \ref{MT1}}
From \eqref{U_on_f} and Theorem \ref{T1}, we deduce that there exists $G_{p, \ell, j}(z)\in M_{\lambda}(\Gamma_0(p))\cap \Z[[q]]$ such that 
\begin{equation}\label{eq:G}
   \sum_{m = 0}^{\infty} p_{[1,p]}(\ell^j m - \delta)q^
{m - \frac{\ell^j\left(\frac{p + 1}{\Delta}\right)}{D}} \equiv \frac{G_{p, \ell, j}(z)}{(\eta(z) \eta(pz))^{\ell^j}} \Mod{\ell^j}.
\end{equation}
Recalling the definition of $t$ in \eqref{defn_t}, Proposition \ref{CE} and Theorem \ref{ord_vanish_G} imply for all cusps $s$ that 
\begin{equation*}
    \operatorname{ord}_{s}\left( (\eta(z) \eta(pz))^{Dt} \right) = Dt \left(  \frac{p + 1}{24} \right) = t \left( \frac{p + 1}{\Delta} \right) \leq \Big\lceil \frac{\delta}{\ell^j} \Big\rceil \leq \operatorname{ord}_{s}(G_{p, \ell, j}(z)).
\end{equation*}
Hence, 
\begin{equation}
\label{H_def}
H_{p, \ell, j}(z) = \frac{G_{p, \ell, j}(z)}{(\eta(z)\eta(pz))^{Dt}}
\end{equation}
is holomorphic at the cusps and lies in  $M_{\lambda - Dt} \left( \Gamma_{0}(p), \left( \frac{-p}{\bigcdot} \right)^{Dt} \right)$.  Theorem \ref{MT1} now follows from \eqref{eq:G} and \eqref{H_def}: 
\begin{align*}
\sum_{m = 0}^{\infty} p_{[1,p]}(\ell^j m - \delta)q^
{m - \frac{\ell^j\left(\frac{p + 1}{\Delta}\right)}{D}} & \equiv \frac{G(z)}{(\eta(z) \eta(pz))^{\ell^j}} \equiv \frac{(\eta(z)\eta(pz))^{Dt} H(z)}{(\eta(z)\eta(pz))^{\ell^j}}  \\ 
&\equiv (\eta(z)\eta(pz))^{Dt - \ell^j} H(z) \Mod{\ell^j}.
\end{align*}

\noindent
\subsection{Proof of Theorem \ref{E5}}
For primes $p$, we recall that 
$D = \frac{24}{\gcd(p + 1, 24)}$.  For primes $\ell \neq p$ with $\ell\geq 5$ and $p + 1\mid 24$, and for $j\geq 1$, we recall that $y = Dt - \ell^j$, $k = \lambda - Dt$, $\chi = \left(\frac{-p}{\bigcdot}  \right)^{Dt}$ and $\mathcal{A}_{p, y, k, \chi}\subseteq M_{k + y}\left(\Gamma_0(pD^2), \left(\frac{-p}{\bigcdot}\right)\right)$ as in \eqref{Aspace}.  As in the remarks following Theorem~\ref{warnock_boylan}, we let $d = d_{p, \ell, j}$ be the dimension of $\mathcal{A}_{p, k, y, \chi}$, and we let $\{f_1,\ldots, f_{d}\}$ be a $\Z$-basis for the $\Z$-module $\mathcal{A}_{p, y, k, \chi}\cap \Z\llbracket q\rrbracket$.  We let $m\geq 5$ be prime with $m\not\in \{\ell, p\}$, and we let $\vec{f} = \langle f_1,\ldots, f_d\rangle^{\text{t}}$.  By Theorem \ref{warnock_boylan}, there exists $M \in \text{Mat}_{d \times d}(\Z)$ such that 
$\vec{f}\mid T_{m^2} = M\vec{f}$.  We also require 
\begin{equation*}
A =  \begin{pmatrix}
M - \left( \frac{-p}{m}  \right) m^{k + y - 1} I_{d} & - m^{2(k + y) - 2} I_{d} \\
I_{d} & 0_{d}, 
\end{pmatrix}
\end{equation*}
as in \eqref{A_matrix}.
In this notation, we use the following lemma to prove Theorem \ref{E5}. 

\begin{lemma} \label{E8}
Let $\ell$, $m$, and $p$ be distinct primes with $\ell$ and $m\geq 5$ and $p\in\{2, 3, 5\}$.  For all $i\geq 0$, we define $M_{i}, N_{i}$ and $O_{i}\in \textup{Mat}_{d\times d}(\Z)$ as follows: We have $M_0 = I_d$ and $N_0 = O_0 = 0_d$, and for all $i\geq 1$, we have 
\begin{equation} \label{E9}
\begin{pmatrix} M_{i} \\ M_{i - 1} 
\end{pmatrix} 
= A^{i} \begin{pmatrix} I_{d} \\ 0_{d} 
\end{pmatrix},
\ N_{i} = \left( \frac{-p}{m}\right) m^{k + y - 1} M_{i - 1}, \ \text{and} \ O_{i} = - m^{2(k + y) - 2} M_{i - 1}.
\end{equation}
Then for all $i\geq 1$, we have 
\begin{align} \label{E2}
\vec{f} \mid U_{m^{2}}^{i} = M_{i} \vec{f} + N_{i} (\vec{f} \otimes 1_{m}) + O_{i} (\vec{f} \mid V_{m^{2}}).
\end{align}
\end{lemma}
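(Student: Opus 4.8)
The plan is to prove \eqref{E2} by induction on $i$, the heart of the matter being a clean expression for $U_{m^2}$ in terms of the Hecke operator $T_{m^2}$, the twist by $1_m$, and $V_{m^2}$. To set up the base case $i=1$, I would expand $T_{m^2}$ directly from its definition \eqref{E4}. Since the divisors of $m^2$ are $1$, $m$, $m^2$, the nebentypus is $\psi = \left(\frac{-p}{\cdot}\right)$, and $\psi(m)^2 = 1$ (as $m\neq p$), one obtains
\[
T_{m^2} = U_{m^2} + \left(\tfrac{-p}{m}\right)m^{s-1}\,V_m\circ U_m + m^{2s-2}\,V_{m^2}.
\]
I would then record the elementary $q$-series identities driving everything: $U_{m^2}\circ V_{m^2}=\mathrm{id}$, so $(\vec f\mid V_{m^2})\mid U_{m^2}=\vec f$; the operator $V_m\circ U_m$ acts on $\sum a(n)q^n$ as projection onto the terms with $m\mid n$, so that $V_mU_m f = f - f\otimes 1_m$ by \eqref{twist}; and $(f\otimes 1_m)\mid U_{m^2}=0$, because extracting the $m^2n$-th coefficients of a series supported away from multiples of $m$ gives zero. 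Substituting the second identity into the display and solving for $U_{m^2}$, then applying to $\vec f$ and using $\vec f\mid T_{m^2}=M\vec f$, yields
\[
\vec f\mid U_{m^2}=\left(M-\left(\tfrac{-p}{m}\right)m^{s-1}I_d\right)\vec f+\left(\tfrac{-p}{m}\right)m^{s-1}(\vec f\otimes 1_m)-m^{2s-2}(\vec f\mid V_{m^2}),
\]
which is exactly \eqref{E2} for $i=1$ with the values of $M_1$, $N_1$, $O_1$ forced by \eqref{E9}.

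For the inductive step, assuming \eqref{E2} for some $i\geq 1$, I would apply $U_{m^2}$ to both sides. The two auxiliary terms collapse: $(\vec f\otimes 1_m)\mid U_{m^2}=0$ annihilates the $N_i$-term, while $(\vec f\mid V_{m^2})\mid U_{m^2}=\vec f$ turns the $O_i$-term into $O_i\vec f$; the surviving term $M_i(\vec f\mid U_{m^2})$ is expanded with the base-case formula just derived. Collecting everything gives
\[
\vec f\mid U_{m^2}^{i+1}=\left[M_i\left(M-\left(\tfrac{-p}{m}\right)m^{s-1}I_d\right)+O_i\right]\vec f+\left(\tfrac{-p}{m}\right)m^{s-1}M_i(\vec f\otimes 1_m)-m^{2s-2}M_i(\vec f\mid V_{m^2}).
\]
The coefficients of $\vec f\otimes 1_m$ and $\vec f\mid V_{m^2}$ are then $\left(\frac{-p}{m}\right)m^{s-1}M_i=N_{i+1}$ and $-m^{2s-2}M_i=O_{i+1}$, matching \eqref{E9}. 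For the coefficient of $\vec f$, I would insert $O_i=-m^{2s-2}M_{i-1}$ and compare with $M_{i+1}$, which unwinding $\begin{pmatrix}M_{i+1}\\M_i\end{pmatrix}=A\begin{pmatrix}M_i\\M_{i-1}\end{pmatrix}$ from \eqref{E9} and \eqref{A_matrix} equals $\left(M-\left(\tfrac{-p}{m}\right)m^{s-1}I_d\right)M_i-m^{2s-2}M_{i-1}$.

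The one point requiring care is the order of matrix multiplication: the recursion from $A$ produces $M$ on the left of $M_i$, whereas the computation naturally produces $M$ on the right. These agree because each $M_i$ is, by \eqref{E9}, a polynomial in the single matrix $M$ with scalar coefficients, so all the $M_i$ commute with $M$ and with one another. I expect this commutativity remark, together with getting the base-case bookkeeping of $\psi(m)^2=1$ and $V_mU_m=\mathrm{id}-(\,\cdot\,\otimes 1_m)$ exactly right, to be the main (indeed the only genuine) obstacle; the remainder is a routine induction.
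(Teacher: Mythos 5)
Your proof is correct and takes essentially the same route as the paper's: the same expansion of $T_{m^2}$ as $U_{m^2} + \left(\frac{-p}{m}\right)m^{s-1}\,V_m\circ U_m + m^{2s-2}\,V_{m^2}$ for the base case, and the same collapse identities $(\vec f\otimes 1_m)\mid U_{m^2}=\vec 0$ and $\vec f\mid V_{m^2}\mid U_{m^2}=\vec f$ driving the induction. Your explicit commutativity remark---each $M_i$ is a polynomial in $M$ with scalar coefficients, so $M_iM_1=M_1M_i$---neatly justifies a step the paper leaves implicit when matching the computed coefficient $M_iM_1+O_i$ against the recursion $M_{i+1}=M_1M_i+O_i$ coming from \eqref{E9}.
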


\begin{proof}
The proof proceeds by induction. We let $1_m$ denote the trivial character modulo $m$. From \eqref{UVq} and \eqref{twist}, we observe that $\vec{f} \mid U_{m} \mid V_{m} = \vec{f} - (\vec{f} \otimes 1_{m})$. Taking $n = m^{2}$ in \eqref{E4}, we deduce that
\begin{align}
\vec{f} \mid U_{m^{2}} & = \vec{f} \mid T_{m^{2}} - \left(   \frac{-p}{m}  \right) m^{k + y - 1} (\vec{f} - (\vec{f} \otimes 1_{m})) - m^{2(k + y) - 2}(\vec{f} \mid V_{m^{2}}) \notag \\
&= \left(M - \left(\frac{-p}{m}  \right)m^{k + y - 1} I_{d}\right)\vec{f} + \left(   \frac{-p}{m}  \right) m^{k + y - 1} (\vec{f} \otimes 1_{m})- ~ m^{2(k + y) - 2}  (\vec{f} \mid V_{m^{2}}) \notag \\
&= M_{1} \vec{f} + N_{1}  (\vec{f} \otimes 1_{m}) + O_{1} (\vec{f} \mid V_{m^{2}}), 
\label{E1}
\end{align}
where the second equality follows from $\vec{f}\mid T_{m^2} = M\vec{f}$.  We now let $i\geq 1$, and we suppose that \eqref{E2} holds.  
From \eqref{UVq} and \eqref{twist}, we note that $(\vec{f}\otimes 1_m)\mid U_{m^2} = \vec{0}$ and that $\vec{f}\mid V_{m^2}\mid U_{m^2} = \vec{f}$.  Applying $U_{m^2}$ to both sides of \eqref{E2} and using these facts, we obtain
\begin{align*}
\vec{f} \mid U_{m^{2}}^{i + 1} & = M_{i} (\vec{f} \mid U_{m^{2}}) + O_{i} \vec{f}
= (M_iM_1 + O_i)\vec{f} + M_iN_1(\vec{f}\otimes 1_m) + M_iO_1(\vec{f}\mid V_{m^2}) \\
& = M_{i + 1}\vec{f} + N_{i + 1}(\vec{f}\otimes 1_m) + O_{i + 1}(\vec{f}\mid V_{m^2}),
\end{align*}
where the second equality holds on substituting \eqref{E1} for $\vec{f} \mid U_{m^{2}}$, and \eqref{E9} precisely encodes the third equality.
\end{proof}
\noindent
We now turn to the proof of Theorem \ref{E5}. We suppose that $A\in \text{GL}_{2d \times 2d}(\Z)$ as in \eqref{A_matrix} has order $J$ in $\text{PGL}_{2d}(\Z/\ell^j\Z)$.  Then there exists $c \in (\mathbb{Z}/ \ell^{j}\mathbb{Z})^{\times}$ such that $A^{J} \equiv c I_{2d} \Mod{\ell^{j}}$.  We let $v \geq 1$.  With $i = vJ - 1$,  \eqref{E9} becomes 
\begin{align*}
\begin{pmatrix}
M_{v J - 1} \\ M_{v J - 2}
\end{pmatrix} 
& =  A^{v J - 1} 
\begin{pmatrix}
I_{d}\\0_{d}
\end{pmatrix} 
= A^{vJ}\bigcdot A^{-1}
\begin{pmatrix}
I_{d}\\
0_{d}
\end{pmatrix} 
\\
& \equiv c^{v} ~I_{2d} \, \bigcdot \, m^{-(2(k + y) - 2)}
\begin{pmatrix} 0_{d} & m^{2(k + y) - 2}I_{d} \\
- I_{d} & M - \left(\frac{-p}{m}\right) m^{k + y - 1} I_{d}
\end{pmatrix}\bigcdot
\begin{pmatrix}
I_{d} \\ 0_{d}
\end{pmatrix} \\
&\equiv -c^{v} m^{-(2(k + y) - 2)} 
\begin{pmatrix} 
0_{d} \\ 
I_{d}
\end{pmatrix} \Mod{\ell^j}.
\end{align*}

It follows that 
$M_{vJ - 1} \equiv 0_d \Mod{\ell^j}$ and $M_{vJ - 2} \equiv -c^vm^{-(2(k + y)-2)}I_d \Mod{\ell^j}$.  Lemma \ref{E8} now yields
	\begin{align*}
		\Vec{f} \mid U_{m^{2}}^{v J - 1}  
		\equiv  N_{v J  - 1} \left(\Vec{f} \otimes 1_{m}  \right) + O_{vJ - 1} \left( \Vec{f} \mid V_{m^{2}} \right) \Mod{\ell^{j}}.
	\end{align*}
Applying $U_{m}$ and noting for $\alpha \geq 1$ that $\displaystyle{U_{m^{2}}^{\alpha} = U_{m^{2 \alpha}}}$, we obtain	
\begin{align}
	\vec{f} \mid U_{m^{2vJ - 1}} & = \vec{f} \mid U_{m^{2}}^{v J - 1} \mid U_{m} \notag \\ 
    & = N_{v J - 1} \left(\vec{f} \otimes 1_{m} \right) \mid U_{m} + ~ O_{vJ - 1}\left(\vec{f} \mid V_{m^{2}} \mid U_{m}\right)
	= O_{v J - 1} \left( \vec{f} \mid V_{m} \right), \label{E12}
\end{align}
where the third equality results from $(\vec{f}\otimes 1_m)\mid U_m =\vec{0}$ and $\vec{f}\mid V_{m^2}\mid U_m = \vec{f}\mid V_m$ by \eqref{UVq} and \eqref{twist}.
From \eqref{E9}, we observe that 
\begin{equation*}
    O_{v J - 1} = M_{v J - 2} \bigcdot O_{1} = - c^{v} m^{- (2(k + y) - 2)} I_{d} \, \bigcdot \, -m^{(2(k + y) - 2)} I_{d} \equiv c^{v} I_{d} \Mod{\ell^{j}}. 
\end{equation*}
Substitution in \eqref{E12} gives
\begin{equation}
\vec{f} \mid U_{m^{2vJ - 1}}  \equiv c^{v} \vec{f} \mid V_{m} \Mod{\ell^{j}}. \label{vector_cong}
\end{equation}

To conclude the proof of Theorem \ref{E5}, we translate the vector congruence \eqref{vector_cong} into congruences for modular form coefficients.  For all $1\leq i\leq d$, we let $f_i(z) = \sum a_i(n)q^n$, and we recall that $f_i$ is the $i$th component function of $\vec{f}$.  We apply \eqref{UVq} to \eqref{vector_cong} to obtain
\[
\sum a_i(m^{2vJ-1}n)q^n\equiv c^v\sum a_i\left(\frac{n}{m}\right)q^n \Mod{\ell^j}.
\]
For all $1 \leq i \leq d$ and for all $n \geq 0$, comparing coefficients yields
\begin{gather}\label{MM}
a_{i}(m^{2vJ - 1}n) \equiv \begin{dcases} c^{v} a_{i_{1}} \left( \frac{n}{m}  \right) \Mod{\ell^{j}} \quad &\text{if} \ m \mid n, \\
    0 \Mod{\ell^{j}} \quad & \text{if} \ m \nmid n. 
\end{dcases}
\end{gather}
\noindent
The first remark following Theorem \ref{MT1} and \eqref{Aspace} imply, for primes $p$ with $p + 1\mid 24$, that 
$\sum p_{[1, p]}\left(\frac{\ell^jn + 1}{D}\right)q^n$ lies in $\mathcal{A}_{p, y, k, \chi} \cap \Z[[q]] \subseteq M_{s}\left(\Gamma_0(pD^2),\left(\frac{-p}{\bigcdot}\right)\right)$ modulo $\ell^j$.  Since the components of $\vec{f}$ are basis functions of the $\Z$-module $\mathcal{A}_{p, y, k, \chi}\cap \Z[[q]]$, there exists $\beta_{1}, \ldots, \beta_{d} \in \mathbb{Z}$ with 
\begin{gather} \label{lin_comb}
     \sum p_{[1, p]}\left( \frac{\ell^{j} n + 1}{D}    \right) q^{n} \equiv \sum_{i = 1}^{d} \beta_{i} f_{i} \equiv \sum  \left( \sum_{i = 1}^{d} \beta_{i} a_{i}(n) \right) q^{n} \Mod{\ell^{j}}. 
\end{gather}
Comparing coefficients with index $m^{2vJ - 1}n$ for $n$ with $m\nmid n$ and using \eqref{MM} gives the congruence \eqref{E13}, which is the first part of Theorem \ref{E5}.
  
We now prove \eqref{E14}, which is the second part of Theorem \ref{E5}. We let $O(A) = N$ in $\textup{GL}_{2d} (\mathbb{Z}/ \ell^{j} \mathbb{Z})$, and we apply \eqref{E9} from Lemma \ref{E8} with $i = N$ to obtain
\begin{gather*}
    \begin{pmatrix}
        M_{N} \\ M_{N - 1} \end{pmatrix} \equiv A^{N} \begin{pmatrix}
            I_{d} \\ 0_{d}
        \end{pmatrix} \equiv \begin{pmatrix}
            I_{d} \\ 0_{d}
        \end{pmatrix}, \ 
        N_N = M_{N - 1}N_1 = 0_d, \ 
        O_N = M_{N - 1}O_1 = 0_d \Mod{\ell^j}.
\end{gather*}
Therefore, when $i = N$, the conclusion of Lemma \ref{E8} yields 
\begin{align} \label{selfcong}
    \vec{f}\mid U_{m^{2N}} = \vec{f} \mid U_{m^{2}}^{N} & = M_{N} \vec{f} + N_{N} (\vec{f} \otimes 1_{m}) + O_{N} (\vec{f} \mid V_{m^{2}})\equiv \vec{f} \Mod{\ell^{j}}.
\end{align}
We use \eqref{lin_comb} and \eqref{selfcong} to compute
\begin{align*}
   \sum p_{[1, p]}\left( \frac{\ell^{j} n + 1}{D}    \right) q^{n} & \equiv \sum_{i = 1}^{d} \beta_{i} f_{i} \equiv \sum_{i = 1}^{d}\beta_i f_i\mid U_{m^{2N}}
   \equiv \sum p_{[1, p]}\left( \frac{\ell^{j} n + 1}{D}\right) q^{n}\mid U_{m^{2N}} \\
   & \equiv \sum p_{[1, p]}\left( \frac{\ell^{j} m^{2N}n + 1}{D}\right) q^{n} \Mod{\ell^j}.
\end{align*}
Comparing coefficients, letting $w$ be a non-negative integer, and replacing $n$ by $m^wn$ in the resulting congruence gives~\eqref{E14}.

%%%%%%Examples%%%%

\subsection{Explicit examples of congruences}
We conclude with details on the congruences \eqref{ex1} and \eqref{ex2} illustrating Theorem \ref{E5}.  To explain \eqref{ex1}, we let $\ell =13$, $m = 7$, $p = 5$, and $j = 1$.  When $p = 5$, we have $D = 4$.  With $f_0$, $f_1$, and $f_2$ as in \eqref{basis}, Theorem \ref{MT1} and the example illustrating the theorem imply that 
\[
\sum_{n = 0}^{\infty}p_{[1,5]}\left(\frac{13n+1}{4}\right)q^n \equiv
(\eta(4z)\eta(20z))^3H(4z) \in \mathcal{A}_{5,3,8,1_5}\Mod {13}
\]
with $H(z) = 12f_0(z)^2 + 2f_0(z)f_1(z) + 6f_1(z)^2+3f_1(z)f_2(z) + f_2(z)^2 \in M_8(\Gamma_0(5))$.  We let $g(z) = (\eta(4z)\eta(20z))^3 \in M_{3}\left(\Gamma_0(80), \left(\frac{\bigcdot}{5}\right)\right)$.  Since $\mathcal{A}_{5, 3, 8, 1_5}\cong M_8(\Gamma_0(5))$ is $5$-dimensional, $\mathcal{A}_{5, 3, 8, 1_5}$ has $\Z$-basis
\[
B = \{g(z)f_0(4 z)^{2 - a}f_1(4 z)^{a} : 0\leq a\leq 2\} \cup \{g(z)f_1(4 z)^{2 - b}f_2(4 z)^b : 1\leq b\leq 2\}.
\]
By Theorem \ref{warnock_boylan}, the space $\mathcal{A}_{5, 3, 8, 1_{5}}$ is invariant under the Hecke operators $T_{m^2}$ in the ambient space $M_{11}\left(\Gamma_0(80), \left(\frac{\bigcdot}{5}\right)\right)$.  We let $M$ be the matrix of $T_{49}$ with respect to the basis $B$.  Using PARI/GP \cite{PARI2}, we compute   
\begin{equation*}
   M \equiv \begin{pmatrix}
       10 & 3 & 6 & 3 & 11 \\
       9 & 12 & 11 & 12 & 5 \\
       11 & 9 & 1 & 6 & 2 \\
       12 & 1 & 10 & 12 & 6 \\
       11 & 2 & 7 & 11 & 10 
   \end{pmatrix} \Mod{13}.
\end{equation*}
We find that the order of the matrix 
\begin{equation*}
   A = \begin{pmatrix} M - \left( \left(   \frac{-5}{7}  \right)7^{10} \right) I_{5} & (- 7^{20})  I_{5} \\ I_{5} & 0_{5} \end{pmatrix}
\end{equation*}
as in \eqref{A_matrix} in $\textup{PGL}_{10}\left(\mathbb{Z}/13 \mathbb{Z}\right)$ is $J = 1190$, which gives the congruence \eqref{ex1}.  Furthermore, we find that $A$ has order
$3570$ in $\textup{GL}_{10}\left(\mathbb{Z}/13 \mathbb{Z}\right)$.  The second part of Theorem \ref{E5} then implies, for all non-negative integers $w$ and $n$, the congruence
		\begin{equation*}
			p_{\left[ 1, 5  \right]} \left( \frac{13 \bigcdot 7^{w} \bigcdot n + 1}{4}  \right) \equiv  p_{\left[ 1, 5  \right]} \left( \frac{13 \bigcdot 7^{7140 + w} \bigcdot n + 1}{4} \right)  \Mod{13}.  
		\end{equation*} 

\vskip.1in

We now turn to the congruence \eqref{ex2}.  We let $\ell = 7$, $m = 23$, $p = 3$, and $j = 2$.  When $p = 3$, we have $D = 6$.  Theorem \ref{MT1} implies that there exists $H(z)\in M_{36}(\Gamma_0(3))$ such that 
\[
\sum_{n = 0}^{\infty}p_{[1,3]}\left(\frac{7^2n + 1}{6}\right)q^n \equiv (\eta(6z)\eta(18z))^5 H(6z)\in \mathcal{A}_{3, 5, 36, 1_3}\Mod {7^{2}}. 
\]
We let $h(z) = (\eta(6z)\eta(18z))^{5}\in M_{5}\left(\Gamma_0(108), \left(\frac{-3}{\bigcdot}\right)\right)$, and we let 
\[
g_{0}(z) = \frac{\eta(z)^{18}}{\eta{(3 z)}^{6}} = 1 + \cdots,\, g_{1}(z) = (\eta(z) \eta(3 z))^{6} = q + \cdots, \, g_{2}(z) = \frac{\eta(3 z)^{18}}{\eta{(z)}^{6}} = q^2 + \cdots
\]
in $M_{6}(\Gamma_0(3))$.  Since $\mathcal{A}_{3, 5, 36, 1_3} \cong M_{36}(\Gamma_0(3))$ is $13$-dimensional, $\mathcal{A}_{3,5,36,1_3}$ has $\Z$-basis
\[
B'= \{h(z)g_0(6 z)^{6-a}g_1(6 z)^a : 0\leq a\leq 6\}
\cup \{h(z)g_1(6 z)^{6-b}g_2(6 z)^b: 1\leq b\leq 6\}.  
\]
Theorem \ref{warnock_boylan} implies that $\mathcal{A}_{3, 5, 36, 1_3}$ is $T_{m^2}$-invariant in the ambient space $M_{41}\left(\Gamma_0(108), \left(\frac{-3}{\bigcdot}\right)\right)$.  We let $M'$ be the matrix of $T_{23^2}$ with respect to $B'$.  Using PARI/GP \cite{PARI2}, we compute

\[
M' \equiv \begin{pmatrix}
          1 & 3 & 0 & 10 & 11 & 44 & 18 & 10 & 28 & 45 & 16 & 26 & 34 \\
          15 & 22 & 9 & 9 & 36 & 38 & 5 & 40 & 24 & 42 & 36 & 1 & 39 \\
          14 & 2 & 38 & 10 & 7 & 6 & 41 & 47 & 7 & 15 & 8 & 12 & 31 \\
          36 & 15 & 31 & 20 & 4 & 33 & 34 & 41 & 0 & 40 & 3 & 14 & 40 \\
          33 & 30 & 22 & 21 & 28 & 14 & 38 & 44 & 2 & 29 & 30 & 6 & 39 \\
          47 & 2 & 19 & 2 & 34 & 16 & 10 & 24 & 10 & 15 & 7 & 1 & 41 \\
          3 & 4 & 15 & 2 & 37 & 7 & 3 & 7 & 16 & 44 & 43 & 46 & 31 \\
          41 & 8 & 7 & 36 & 45 & 31 & 38 & 16 & 27 & 30 & 26 & 9 & 26 \\
          11 & 6 & 44 & 43 & 44 & 2 & 45 & 14 & 28 & 21 & 36 & 23 & 26 \\
          19 & 14 & 3 & 26 & 0 & 20 & 6 & 12 & 25 & 20 & 3 & 29 & 8 \\
          17 & 40 & 1 & 15 & 7 & 19 & 20 & 27 & 7 & 38 & 38 & 16 & 14 \\
          25 & 29 & 22 & 42 & 24 & 26 & 26 & 3 & 15 & 30 & 44 & 22 & 22 \\
          20 & 33 & 23 & 3 & 28 & 10 & 46 & 23 & 46 & 45 & 0 & 31 & 1 
      \end{pmatrix}\Mod{7^{2}}. 
\]
We find that the order of the matrix 
\begin{gather}
   A' = \begin{pmatrix} M - \left(  \left( \frac{-3}{23} \right) 23^{40} \right) I_{13} & (- 23^{80})  I_{13} \\ I_{13} & 0_{13} \end{pmatrix} \Mod{7^{2}}
\end{gather}
in $\textup{PGL}_{26}\left(\mathbb{Z}/49\mathbb{Z}\right)$ is $J = 1176$.  The congruence 
\eqref{ex2} follows.  Additionally, we find that $A'$ has order $1176$ in $\textup{GL}_{26}\left(\mathbb{Z}/49 \mathbb{Z}\right)$.  An application of the second part of Theorem \ref{E5} now yields the congruence
		\begin{gather}
			p_{\left[ 1, 3  \right]} \left( \frac{7^{2} \bigcdot 23^{w} \bigcdot n + 1}{6}  \right) \equiv  p_{\left[ 1, 3  \right]} \left( \frac{7^{2} \bigcdot 23^{2352 + w} \bigcdot n + 1}{6} \right)  \Mod{7^{2}}
		\end{gather} 
for all non-negative integers $w$ and $n$.
%\nocite{*}
\section{Data availability}
Data sharing is not applicable to this article as no datasets were generated or analysed during the current study.

\end{document}